\documentclass[11pt]{article}

\usepackage[T1]{fontenc}
\usepackage[utf8]{inputenc}
\usepackage[english]{babel}
\usepackage{lmodern}
\usepackage{microtype}

\usepackage[a4paper,margin=1.15in]{geometry}
\usepackage{setspace}
\AtBeginDocument{%
  \setlength{\abovedisplayskip}{0.28\baselineskip}%
  \setlength{\belowdisplayskip}{0.28\baselineskip}%
  \setlength{\abovedisplayshortskip}{0.14\baselineskip}%
  \setlength{\belowdisplayshortskip}{0.14\baselineskip}%
}
\usepackage{fancyhdr}

\usepackage{graphicx}
\graphicspath{{./Images/}}
\usepackage{subcaption}
\usepackage[export]{adjustbox}
\usepackage{array}
\usepackage{booktabs}
\usepackage{longtable}

\usepackage{enumitem}
\usepackage{xcolor}
\usepackage{comment}
\usepackage{etoolbox}

\usepackage{amsmath,amssymb,amsthm,amsfonts}
\usepackage{mathtools}
\usepackage{bbm} 

\usepackage{imakeidx}
\usepackage{wasysym}

\usepackage{soul}
\usepackage{mdframed}

\usepackage{matlab-prettifier}

\usepackage{titlesec}
\titlespacing*{\section}{0pt}{1em}{0.5em}
\titlespacing*{\subsection}{0pt}{0.5em}{0.25em}

\usepackage{datetime}

\usepackage[colorlinks=true,linkcolor=blue,citecolor=blue,urlcolor=blue]{hyperref}
\hypersetup{
  colorlinks=true,
  linkcolor=blue,
  citecolor=blue,
  urlcolor=blue,
  pdfnewwindow=true
}
\usepackage[nameinlink,noabbrev]{cleveref}

\newcommand{\R}{\mathbb{R}}

\DeclareMathOperator{\diver}{div}

\newcommand{\eps}{\varepsilon}

\DeclareMathOperator{\dist}{dist}
\DeclareMathOperator{\supp}{supp}

\theoremstyle{plain}
\newtheorem{theorem}{Theorem}[section]
\newtheorem{proposition}[theorem]{Proposition}
\newtheorem{lemma}[theorem]{Lemma}
\newtheorem{corollary}[theorem]{Corollary}

\theoremstyle{definition}
\newtheorem{definition}[theorem]{Definition}

\theoremstyle{remark}
\newtheorem{remark}[theorem]{Remark}
\newtheorem{assumption}[theorem]{Assumption}
\newtheorem*{notation}{Notation}

\makeatletter
\renewcommand{\@fnsymbol}[1]{\ifcase#1\or 1\or *\or \dagger\or \ddagger\or \mathsection\or \mathparagraph\or \|\or **\or \dagger\dagger\or \ddagger\ddagger\else\@ctrerr\fi}
\makeatother

\title{State--constrained optimal control of the continuity equation and infinite--dimensional viscosity solutions}
\author{%
Fabio Bagagiolo\thanks{Department of Mathematics, University of Trento, Via Sommarive 14, Povo, TN 38123, Italy.}
\and
Ivan Roman\`o\textsuperscript{1}\thanks{Corresponding author: Ivan Roman\`o; email: \href{mailto:ivan.romano@unitn.it}{ivan.romano@unitn.it}; ORCID: \href{https://orcid.org/0009-0003-1389-1543}{0009-0003-1389-1543}.}}
\date{}

\begin{document}
\maketitle

\begin{abstract}
\noindent We study a finite-horizon optimal control problem for the continuity equation under a weighted integral state constraint on the mass outside a fixed set. The model is cast in a Hilbert framework for densities. On a suitable invariant compact subset, we prove that the value function is Lipschitz continuous and satisfies, by dynamic programming, the associated infinite-dimensional constrained Hamilton--Jacobi--Bellman equation in viscosity sense (subsolution in the interior, supersolution up to the boundary). We finally prove a comparison principle and uniqueness in the Lipschitz class.\\ 

\noindent\textbf{Keywords:} infinite-dimensional HJB, state-constrained optimal control, continuity equation, viscosity solutions.\\
\textbf{MSC Classification:} 49L25, 35R15, 49L20, 35L03, 35F21.
\end{abstract}


\section{Introduction}
\label{sec:introduction}

In this paper we address an optimal control problem for a first-order continuity equation of the form
\begin{equation}
\label{eq:intro0}
\partial_t m+\mbox{div}(\alpha m)=0
\end{equation}
\noindent
where the unknown $m:\mathbb{R}^n\times[0,T]\to\mathbb{R}$ is the density function of the distribution of a mass evolving on $\mathbb{R}^n$ and the vector field $\alpha:\mathbb{R}^n\times[0,T]\to\mathbb{R}^n$ is the control at our disposal. In particular, besides the minimization of an integral cost functional of the form
\[
J(\bar{m},t,\alpha)=\int_t^T\ell(m(\cdot,s),\alpha(\cdot,s))\;ds+\psi(m(\cdot,T)),
\]
\noindent
the optimal control problem is subject to a state-constraint: the mass should remain inside a fixed domain $\Omega\subseteq\mathbb{R}^n$. A suitable way to analytically write such a constraint is also discussed in the paper and the main results involve a relaxed formulation of it of the form
\begin{equation}
\label{eq:intro0.5}
\int_{\mathbb{R}^n\setminus\Omega}p(x)m(x,s)\;dx\le\delta\qquad  \forall s\in[t,T],
\end{equation}
\noindent
where $p:\mathbb{R}^n\to\mathbb{R}$ is a fixed positive penalty function and $\delta>0$ is a fixed threshold. For the value function
\[
V(m,t)=\inf_{\alpha}J(m,\alpha,t),
\]
\noindent
defined on a suitable subset of $L^2(\mathbb{R}^n)\times[0,T]$, we prove, under suitable hypotheses, its (Lipschitz) continuity and its uniqueness as a viscosity solution of the constrained infinite-dimensional Hamilton--Jacobi--Bellman equation, coupled with a terminal condition,
\begin{equation}
\label{eq:intro1}
\left\{
\begin{array}{ll}
\displaystyle
-\partial_tV+\sup_a\left\{\langle \nabla_mV,\mbox{div}(am)\rangle-\ell(m,a)\right\}=0&\mbox{if } \displaystyle \int_{\mathbb{R}^n\setminus\Omega}p(x)m(x)dx<\delta,\\
\displaystyle
-\partial_tV+\sup_a\left\{\langle \nabla_m V,\mbox{div}(am)\rangle-\ell(m,a)\right\}\ge0&\mbox{if } \displaystyle \int_{\mathbb{R}^n\setminus\Omega}p(x)m(x)dx=\delta,\\
\displaystyle
V(m,T)=\psi(m)
\end{array}
\right.
\end{equation}
\noindent
where $a:\mathbb{R}^n\to\mathbb{R}^n$ is a constant (in time) vector field, $\nabla_mV$ is the Fr\'echet differential with respect to $m\in L^2(\mathbb{R}^n)$ and $\langle\cdot,\cdot\rangle$ denotes the scalar product in $L^2(\mathbb{R}^n)$.
We suitably adapt the seminal results for the finite-dimensional case of Soner \cite{Soner1986StateConstraintI} (see also Bardi-Capuzzo Dolcetta \cite{BardiCapuzzoDolcetta1997}) to our infinite-dimensional case. In particular, we first construct a "pushback" control which allows us to compress the mass towards $\Omega$ when the constraint is going to be violated; suitably using such a control as a needle, only when necessary, we prove the Lipschitz continuity of the value function; by the dynamic programming formula we prove that the value function solves, in the viscosity sense, the infinite-dimensional problem \eqref{eq:intro1} in a suitable subset $\bar{X}_c\subseteq L^2(\mathbb{R}^n)\times[0,T]$; constructing a penalization function in the double variable technique, exploiting a suitable inward-cone property of the constraint, we get a comparison result for \eqref{eq:intro1} and hence uniqueness in the class of Lipschitz functions.

As in Bagagiolo--Capuani--Marzufero \cite{bagagiolo2024single,bagagioloCapuaniMarzufero2025zerosum}, where some differential games for \eqref{eq:intro0} are studied, we aim to pose our problem in a Hilbert setting, which means that we are going to work with masses whose distribution over $\mathbb{R}^n$ is given by measures $\mu$ having a density function $m$. In order to keep this true for all times during the evolution, we assume a suitable (spatial) regularity of the initial datum $m_0\in\ L^2(\mathbb{R}^n)$ and of the vector fields $\alpha(\cdot,t):\mathbb{R}^n\to\mathbb{R}^n$. Such regularities also bring the necessary compactness property that we need in order to get the comparison result for (\ref{eq:intro1}), in particular the compactness of $\bar{X}_c$ in $H^1(\mathbb{R}^n)\times[0,T]$. A crucial point is also the estimate between the $H^1$ and the $L^2$ norms.

Several works about the control of the evolution of (\ref{eq:intro0}) have been published in recent years. A main impulse to such research was given by the analysis of first-order mean-field games (see Cardaliaguet \cite{Cardaliaguet2013NotesMFG} and Lasry--Lions \cite{LasryLions2007MFGI}). We point out that in our model the equation \eqref{eq:intro0} is controlled in a centralized manner: the control $\alpha$ is externally chosen by a sort of "supervisor" and not constructed by the individual decisions of the single particles of the mass (decision-making agents). Recent results on centralized control of continuity equations can be grouped into two directions: a DPP/HJB-viscosity approach in Wasserstein spaces (Jimenez--Marigonda--Quincampoix \cite{JimenezMarigondaQuincampoix2020MultiagentWasserstein}, centralized multi-agent formulation and HJB characterization; Aussedat--Jerhaoui--Zidani \cite{AussedatJerhaouiZidani2024ViscosityCentralizedMeasure}, comparison and uniqueness for viscosity solutions in measure spaces) and a variational/Pontryagin approach (Pogodaev \cite{Pogodaev2016OptimalControlContinuityEq}, first-order necessary conditions for controlled continuity equations; Bonnet--Rossi \cite{BonnetRossi2019PMPwasserstein}, Pontryagin maximum principle in the Wasserstein space). In the specific setting of infinite-dimensional control problems with state constraints, Daudin \cite{Daudin2023FokkerPlanckStateConstraints} addresses a second-order (diffusive) Fokker--Planck dynamics in Wasserstein space where the constraint is actually formulated as in (\ref{eq:intro0.5}), Bonnet \cite{Bonnet2019PMPConstrainedWasserstein} proves a Pontryagin maximum principle for constrained problems with nonlocal drift dependence, and Cavagnari--Marigonda--Quincampoix \cite{CavagnariMarigondaQuincampoix2021CompatibilityConstraints} study compatibility conditions between state constraints and admissible multi-agent dynamics for the continuity equation. In all these works equation (\ref{eq:intro0}) is meant for measures and the whole problem is cast in the Wasserstein setting. As we said, in the present paper we adopt a functional perspective and we believe that such an approach, especially for the state-constrained problem, is conceptually natural: working in $L^2$ keeps the density description explicit and provides a Hilbert structure well suited to compactness, stability, and HJB-viscosity analysis. Moreover, up to our knowledge, in the measure/Wasserstein setting, the analysis of the continuity of the value function and the uniqueness of the viscosity solution for the corresponding constrained HJB equation arising from the constrained optimal control of (\ref{eq:intro0}) and made à la Soner \cite{Soner1986StateConstraintI} is still lacking in the literature. It may be the subject of future work. A related constrained problem in Hilbert spaces was studied by Cannarsa--Gozzi--Soner \cite{CannarsaGozziSoner1991HilbertHJB}, for a different state equation: they consider an abstract controlled evolution equation and the associated boundary value problem for an infinite-dimensional HJB equation.

We believe that, from the analytical point of view, the novelty of our study mainly relies on the fact that, 
up to our knowledge, it is the first time that 
a state-constrained optimal control problem for the specific first-order equation (\ref{eq:intro0}) is studied directly with regard to the continuity of the value function and a comparison principle for the corresponding infinite-dimensional Hamilton-Jacobi-Bellman problem in a Hilbert space. Note that in Cannarsa-Gozzi-Soner \cite{CannarsaGozziSoner1991HilbertHJB} the state equation, partly motivated by the control of a second-order-type equation (typically the heat equation), is mostly a generalization to the Hilbert case of the finite-dimensional controlled evolution $y'=f(y,u)$, and this fact somehow reflects in all their work. Moreover, that model does not seem to take into account the presence of the control in the principal part of the differential equation as in our case. Finally, we also point out that in \cite{CannarsaGozziSoner1991HilbertHJB} the control problem is of the so-called exit-time type (strongly related to the constraint problem and raising most of the same analytical questions) and hence their value function does not depend on time, unlike ours.

From the application point of view, our model of state-constrained optimal control may arise from many practical situations where a super-manager has to optimally govern the motion of a huge quantity of some kind of elements, be they particles, agents, information, data to be transmitted or to be clustered. Here, just as an example, we quote the recent work by Geshkovski-Letrouit-Polyanskiy-Rigollet \cite{gesletpolrig} (see also Bruno-Pasqualotto-Agazzi \cite{brupasaga}) where, concerning the evolution of interacting particles (tokens, in particular), the authors study a continuity equation as (\ref{eq:intro0}) constrained to the sphere of $\mathbb{R}^n$, where the velocity field is a given function of the solution somehow linked to the optimization of an interaction energy functional. Other possible applications may arise in economic models. See for example Mehadoui-Lacitignola-Tilioua \cite{mehlactil} where a state-constrained optimal control problem for a system of PDEs is studied with the state constraint represented by an integral bound (similar to \eqref{eq:intro0.5}) on the spatially distributed technological progress. 

As generic references for the theory of viscosity solutions for Hamilton-Jacobi equations, both in finite and infinite dimensions, we refer the reader to the seminal papers by Crandall--Lions \cite{CrandallLions1985HJInfiniteDimI,CrandallLions1986HJInfiniteDimII} and to the book by Bardi--Capuzzo Dolcetta \cite{BardiCapuzzoDolcetta1997}, whereas for the analysis of the continuity equation \eqref{eq:intro0}, we refer to Santambrogio \cite{Santambrogio2015OptimalTransport} and Ambrosio--Crippa \cite{ambrosio2008flow,Ambrosio_Crippa_2014}.\\ 

The paper is organized as follows. In Section 2 we study the controlled continuity equation \eqref{eq:intro0}, show regularity and stability estimates, and construct the compact invariant set used in the analysis. In Section 3 we introduce the state-constrained optimal control problem, prove the dynamic programming principle, and establish the Lipschitz continuity of the value function on $\bar{X}_c\subseteq L^2(\R^n)\times [0,T]$. In Section 4 we derive the corresponding constrained Hamilton--Jacobi--Bellman equation and show that the value function is a constrained viscosity solution. In Section 5 we prove the comparison principle and deduce uniqueness in the Lipschitz class. In Section 6 we discuss alternative formulations of the state constraint and comment on their suitability for our framework. The Appendix collects various estimates on the flow associated to the continuity equation, mostly used in Section 2.

\section{The continuity equation}
\label{sec:continuityeq}

In this Section we first recall the flow representation and some uniform estimates; regularity and stability are proved in Subsection~\ref{subsec:traj-regularity}, and we construct an invariant compact set $X$ for the evolutions in Subsection~\ref{subsec:construction of an invariant}.\\

\noindent Let $T$ be the fixed time horizon, $t\in [0,T)$  and consider a time-dependent (velocity) vector field $\alpha:\R^n \times [0,T] \to \R^n$. The corresponding evolution in time of a density $m:\R^n\times[t,T]\to \R$ is described by the continuity equation initial-data problem \eqref{eq:continuity}.
\begin{equation}\label{eq:continuity} 
    \begin{cases}
        \partial_s m(x,s) + \diver(\alpha(x,s)m(x,s)) = 0& \text{for }(x,s)\in\mathbb{R}^n\times (t,T),\\
        m(x,t) = \bar{m}(x)& \text{for }x\in\R^n,
    \end{cases}
\end{equation}

\noindent where $\bar{m}:\R^n\to \R$ is the initial density at time $t$.\\
The flow associated with the system \eqref{eq:continuity}, denoted by $\Phi$, is the solution of 
\begin{equation}\label{eq:flow equation}
    \begin{cases}
        \partial_s \Phi(x,s,t) = \alpha(\Phi(x,s,t),s), & s\in(t,T),\\
        \Phi(x,t,t) = x.
    \end{cases}
\end{equation}
for every $x\in\R^n$, where $\Phi(x,s,t) $ is the position at time $s$ of a particle that has started at time $t$ from the position $x$, under the velocity field $\alpha$.

\begin{assumption}\label{ass:alpha}
    We assume that the vector field $\alpha$ belongs to 
    \[
        \mathcal{A}:=\left\{\alpha \in L^2\left([0,T],W^{3,\infty}(\R^n;\R^n)\cap H^1(\R^n;\R^n)\right):\alpha(\cdot,s)\in \tilde{\mathcal{A}} \text{ for a.e. }\;s\in [0,T]\right\},
    \]
    where, for a fixed constant $M>0$,
    \[
        \tilde{\mathcal{A}}:=\left\{ a\in W^{3,\infty}(\R^n;\R^n)\cap H^1(\R^n;\R^n): \max\left\{ \|a\|_{W^{3,\infty}},\|a\|_{H^1} \right\}\leq M \right\}.
    \]
\end{assumption}

\begin{remark}\label{rem:uniformflowestimates}
    In the sequel we will use the notation $\Phi_{t,s}:=\Phi(\cdot,s,t):\R^n\to \R^n$ for the flow and $\Psi_{t,s}:=\Phi_{t,s}^{-1}$ for its inverse. We refer to Appendix~\ref{app:flow-estimates} (Proposition \ref{prop:appendix}) for details on the flow and its inverse, in particular for estimates on them and their derivatives, which work due to the spatial regularity Assumption \ref{ass:alpha}.
    Moreover, see for example \cite{Ambrosio_Crippa_2014}, the unique explicit solution of \eqref{eq:continuity}, denoted by $m(\cdot,\cdot;\alpha,\bar{m},t)$, is the pushforward of $\bar{m}$ by the flow map $\Phi_{t,s}$, i.e., \begin{equation}\label{eq:representation}
        m(x,s;\alpha,\bar{m},t):= \frac{\bar{m}(\Psi_{t,s}(x))}{\det D\Phi_{t,s}(\Psi_{t,s}(x))}\qquad \forall (x,s)\in \R^n\times [t,T],
    \end{equation}
    where $D$ stays for the spatial Jacobian matrix. The solution of \eqref{eq:continuity} is understood in the distributional sense, even if we will see that under our assumptions it will be more regular.
\end{remark}

\noindent 

\subsection{Regularity of the solutions}\label{subsec:traj-regularity}

In this subsection we establish regularity and stability estimates for the solutions of \eqref{eq:continuity}, under the Assumption \ref{ass:alpha}.

\begin{proposition}\label{prop:boundW2infty}
    There exists a constant $C=C(M,T,n)$ such that, for every $t\in[0,T)$, every $\bar m\in W^{2,\infty}(\R^n)$, and every $\alpha\in\mathcal{A}$, one has
    \[
        \|m(\cdot,s;\alpha,\bar m,t)\|_{W^{2,\infty}}\leq C\|\bar m\|_{W^{2,\infty}}
        \qquad \forall s\in[t,T].
    \]
   An analogous estimate also holds for the $H^2$ norm, starting from an initial datum in $H^2(\R^n)$.
\end{proposition}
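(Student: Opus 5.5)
We use the explicit representation \eqref{eq:representation} together with the flow estimates of Proposition~\ref{prop:appendix}. Set $J_{t,s}(y):=\det D\Phi_{t,s}(y)$. Then
\[
m(x,s)=\bar m(\Psi_{t,s}(x))\, w_{t,s}(x), \qquad w_{t,s}:=\frac{1}{J_{t,s}\circ\Psi_{t,s}}.
\]

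\textbf{Bounds on the flow quantities.} Liouville's formula $\partial_s J_{t,s}(y)=\diver\alpha(\Phi_{t,s}(y),s)\,J_{t,s}(y)$ gives
\[
e^{-\int_t^s\|\diver\alpha\|_\infty}\le J_{t,s}\le e^{\int_t^s\|\diver\alpha\|_\infty}.
\]
Since $\alpha\in L^2([0,T];W^{3,\infty})$ with norm bounded by $M$ a.e., we have $\int_t^T\|\alpha(\cdot,r)\|_{W^{3,\infty}}dr\le MT$. Hence $J$ is bounded above and below uniformly by a constant depending only on $(M,T,n)$. Differentiating the flow equation \eqref{eq:flow equation} once and twice in $x$ and applying Gronwall, we get uniform bounds on $D\Phi_{t,s}$ and $D^2\Phi_{t,s}$, and also on $D^3\Phi_{t,s}$ since $\alpha\in W^{3,\infty}$. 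The same holds for $\Psi_{t,s}$, either via the backward flow or via the inverse function theorem, $D\Psi=(D\Phi)^{-1}\circ\Psi$ with $\det D\Phi$ bounded below. We take all of these from Proposition~\ref{prop:appendix}. As a consequence $J_{t,s}$ has uniformly bounded derivatives up to order $2$, because $J$ is a polynomial in the entries of $D\Phi$ and its second derivatives involve $D^3\Phi$. Therefore $w_{t,s}$ is bounded in $W^{2,\infty}$ by $C(M,T,n)$, by the chain rule and the lower bound on $J$.

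\textbf{$W^{2,\infty}$ estimate.} We apply the Leibniz and chain rules to $\bar m\circ\Psi$:
\[
D(\bar m\circ\Psi)=(D\Psi)^{\top}(\nabla\bar m\circ\Psi),
\]
\[
D^2(\bar m\circ\Psi)=(D\Psi)^{\top}(D^2\bar m\circ\Psi)D\Psi+(\nabla\bar m\circ\Psi)\cdot D^2\Psi.
\]
Each term is bounded by $C\|\bar m\|_{W^{2,\infty}}$. Multiplying by $w$ and using the $W^{2,\infty}$ bound on $w$ gives $\|m(\cdot,s)\|_{W^{2,\infty}}\le C\|\bar m\|_{W^{2,\infty}}$.

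\textbf{$H^2$ estimate.} We use the same pointwise expansion of $D^k m$ for $k\le2$: each term is a product of an $L^\infty$-bounded flow factor with $(D^j\bar m)\circ\Psi$, $j\le k$. The change of variables $x=\Phi_{t,s}(y)$ gives
\[
\int|D^j\bar m(\Psi(x))|^2dx=\int|D^j\bar m(y)|^2J_{t,s}(y)\,dy\le C\|D^j\bar m\|_{L^2}^2,
\]
using the upper bound on $J$. Summing over $j\le k\le 2$ yields the $H^2$ estimate. For $\bar m\in H^2$ the chain rule for Sobolev functions composed with a $C^2$ bi-Lipschitz diffeomorphism justifies the computation; alternatively we argue by density of smooth functions and lower semicontinuity.

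The \textbf{main obstacle} is getting uniform control of $D^2 J$, equivalently of $D^3\Phi$ and $D^2\Psi$, with constants depending only on $(M,T,n)$ even though $\alpha$ is only $L^2$ in time. Gronwall needs $\int_t^T\|D\alpha(\cdot,r)\|_\infty dr$, and this is bounded because of the a.e.\ bound $\alpha(\cdot,s)\in\tilde{\mathcal A}$. The third-derivative equation is linear in $D^3\Phi$ with forcing terms built from lower-order derivatives, so we handle it by an inductive Gronwall argument. This is exactly where $W^{3,\infty}$ regularity of $\alpha$ is used.
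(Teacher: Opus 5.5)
Your proposal is correct and follows essentially the same route as the paper: the pushforward representation \eqref{eq:representation}, the Leibniz and chain rules, the uniform flow bounds of Proposition~\ref{prop:appendix} (including the $D^3\Phi_{t,s}$ bound needed for second derivatives of the Jacobian), and a change of variables for the $H^2$ case. Your version is slightly more careful than the paper's, since you explicitly track the composition $1/(\det D\Phi_{t,s}\circ\Psi_{t,s})$ and the extra $D\Psi_{t,s}$ factors it produces when differentiated, which the paper absorbs into its constants; note only that your $J_{t,s}$ is the reciprocal of the paper's.
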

\begin{proof}
    Invoking the notation introduced in Proposition \ref{prop:appendix}, setting $m(\cdot,s):=m(\cdot,s;\alpha,\bar m,t)$ we easily get $\|m(\cdot,s)\|_{L^\infty}\le \|\bar m\|_{L^\infty}\,\|J_{t,s}\|_{L^\infty}\le e^{nMT}\|\bar m\|_{L^\infty}$, by \ref{eq:flowregularity5}.
    \noindent Furthermore, recalling in particular \ref{eq:flowregularity2} and \ref{eq:flowregularity7}, 
    \begin{align*}
        \|\nabla m(\cdot,s)\|_{L^\infty}
        &\le \|\nabla \bar m\circ\Psi_{t,s}\|_{L^\infty}\,\|D\Psi_{t,s}\|_{L^\infty}\,\|J_{t,s}\|_{L^\infty}
        + \|\bar m\circ\Psi_{t,s}\|_{L^\infty}\,\|\nabla J_{t,s}\|_{L^\infty} \\
        &\le (e^{MT}+1)\,\tilde{C}_J\,\|\bar m\|_{W^{1,\infty}},
    \end{align*}
    and proceeding analogously, using the uniform boundedness of $\|D^2\Psi_{t,s}\|_{L^\infty}$ together with \ref{eq:flowregularity8}, we obtain $ \|D^2 m(\cdot,s)\|_{L^\infty}\le C_H\,\|\bar m\|_{W^{2,\infty}}$,
    where $C_H=C(M,T,n)$. Combining the previous estimates yields the desired bound in $W^{2,\infty}$.\\ 
    For the $H^2$-estimate, for example,
    \begin{align*}
        \|\nabla m(\cdot,s)\|_{L^2}
        &\le \|\nabla \bar m\circ\Psi_{t,s}\|_{L^2}\,\|D\Psi_{t,s}\|_{L^\infty}\,\|J_{t,s}\|_{L^\infty}
        + \|\bar m\circ\Psi_{t,s}\|_{L^2}\,\|\nabla J_{t,s}\|_{L^\infty} \\
        &\le (e^{MT}+1)\,\tilde{C}_J\,\|\bar m\|_{H^1}.
    \end{align*}
    by an easy change of variables; the estimates for $\|m(\cdot,s)\|_{L^2}$ and $\|D^2 m(\cdot,s)\|_{L^2}$ follows in the same spirit.
\end{proof}

\begin{proposition}\label{prop:useful}
    The following estimates hold:
    \begin{enumerate}
        \item[i)] for all $t\in [0,T),\;\bar{m}\in H^2(\R^n)$, $\alpha\in\mathcal{A}$ and $ s_1,s_2\in [t,T] $ we have that
        \begin{equation*}
            \|m(\cdot,s_1;\alpha,\bar{m},t)-m(\cdot,s_2;\alpha,\bar m,t)\|_{H^1} \leq C(M,n)\left[\sup_{\tau\in (s_1,s_2)} \|m(\cdot,\tau;\alpha,\bar m,t)\|_{H^2}\right]|s_1-s_2|; 
        \end{equation*}
        \item[ii)] for all $t_1,t_2\in [0,T),\;m_1,m_2\in H^2(\R^n),$ $\alpha\in\mathcal{A}$ and $s\in [\max\{t_1,t_2\},T]$ we have that
        \begin{equation*}
            \|m(\cdot,s;\alpha,m_1,t_1)-m(\cdot,s;\alpha,m_2,t_2)\|_{L^2} \leq \tilde{L}(\|m_1-m_2\|_{L^2}+|t_1-t_2|),
        \end{equation*}
        where $\tilde{L}$ depends only on $M,n,T$ and on the $H^2$ norm of the evolution associated with $(m_1,t_1,\alpha)$. Moreover, the same estimate holds with the $L^2$ norm replaced by the $H^1$ norm; see \cite[Proposition~1]{bagagioloCapuaniMarzufero2025zerosum}. 
    \end{enumerate} 
\end{proposition}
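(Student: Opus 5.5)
For (i) I will work directly with the equation rather than the flow representation. Since $m(\cdot,\tau)=m(\cdot,\tau;\alpha,\bar m,t)$ stays in $H^2(\R^n)$ by Proposition~\ref{prop:boundW2infty}, the continuity equation holds in the sense $\partial_\tau m=-\diver(\alpha(\cdot,\tau)m(\cdot,\tau))$ as an identity in $H^1$ for a.e.\ $\tau$. Integrating in time gives
\[
m(\cdot,s_2)-m(\cdot,s_1)=-\int_{s_1}^{s_2}\diver\bigl(\alpha(\cdot,\tau)m(\cdot,\tau)\bigr)\,d\tau .
\]
To justify this, I would first take smooth $\bar m$ and then argue by density, or use the flow representation~\eqref{eq:representation} together with the bounds of Proposition~\ref{prop:appendix}. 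Expanding $\diver(\alpha m)=(\diver\alpha)m+\alpha\cdot\nabla m$, the $H^1$ norm of the integrand involves at most second derivatives of $m$ and first and second derivatives of $\alpha$. Each term is bounded by $\|\alpha(\cdot,\tau)\|_{W^{2,\infty}}\|m(\cdot,\tau)\|_{H^2}\le C(n)M\|m(\cdot,\tau)\|_{H^2}$, using $\alpha(\cdot,\tau)\in\tilde{\mathcal A}$. Taking the supremum in $\tau$ and integrating over $(s_1,s_2)$ gives (i) with $C(M,n)$.

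For (ii), assume without loss of generality that $t_1\le t_2$, and split the difference with the triangle inequality through the intermediate datum $m(\cdot,t_2;\alpha,m_1,t_1)$:
\[
m(\cdot,s;\alpha,m_1,t_1)-m(\cdot,s;\alpha,m_2,t_2)=\bigl[m(\cdot,s;\alpha,\tilde m,t_2)-m(\cdot,s;\alpha,m_2,t_2)\bigr],
\]
where $\tilde m:=m(\cdot,t_2;\alpha,m_1,t_1)$. This identity is the semigroup (flow) property, since $\Phi_{t_1,s}=\Phi_{t_2,s}\circ\Phi_{t_1,t_2}$.

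The equation is linear in the datum, so the bracket equals the evolution of $\tilde m-m_2$ from time $t_2$. By the $L^2$ (resp.\ $H^1$) analogue of Proposition~\ref{prop:boundW2infty}, obtained from the change of variables with Jacobian bounds as in its proof, this is bounded by $C\|\tilde m-m_2\|_{L^2}$. Then
\[
\|\tilde m-m_2\|_{L^2}\le\|\tilde m-m_1\|_{L^2}+\|m_1-m_2\|_{L^2},
\]
and $\|\tilde m-m_1\|_{L^2}=\|m(\cdot,t_2;\alpha,m_1,t_1)-m(\cdot,t_1;\alpha,m_1,t_1)\|$ is controlled by part (i) (its $H^1$ version dominates $L^2$). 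This gives $C\sup_\tau\|m(\cdot,\tau;\alpha,m_1,t_1)\|_{H^2}|t_1-t_2|$. Hence $\tilde L$ depends on $M,n,T$ and on the $H^2$ norm of the evolution from $(m_1,t_1)$, as claimed. The $H^1$ version follows identically, since $H^1$ stability of the linear evolution holds by the same change of variables argument.

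The main obstacle is the rigorous justification in (i) that $\tau\mapsto m(\cdot,\tau)$ is absolutely continuous with values in $H^1$ when $\alpha$ is only $L^2$ in time. I would handle it by approximating $\alpha$ by time-continuous fields in $\mathcal A$ and $\bar m$ by smooth data. The uniform constants of Proposition~\ref{prop:appendix} then let me pass to the limit in the integrated identity, using the $H^1$ stability with respect to the data.
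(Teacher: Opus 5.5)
Your proposal is correct and follows essentially the same route as the paper. For (i) the paper integrates $\partial_s m=-\diver(\alpha m)$ in $H^1$ and bounds $\|\diver(\alpha m)\|_{H^1}\le C(M,n)\|m\|_{H^2}$. For (ii) it passes through $m(\cdot,t_2;\alpha,m_1,t_1)$ via the semigroup property, uses the change of variables $y=\Psi_{t_2,s}(x)$ with the Jacobian bound of Proposition~\ref{prop:appendix} to get the factor $e^{nMT/2}$, and concludes with the triangle inequality and item (i). The only difference is that the paper simply asserts $m\in W^{1,1}([t,T];H^1(\R^n))$ from Proposition~\ref{prop:boundW2infty} and the bounds on $\mathcal{A}$, whereas you propose an approximation argument to justify it.
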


\begin{proof}
    \emph{i)} Since $m:=m(\cdot,\cdot;\alpha,\bar m,t)$ solves the continuity equation, we have
    $\partial_s m(\cdot,s)=-\diver(\alpha(\cdot,s)m(\cdot,s))$ for a.e.\ $s\in [t,T]$; moreover, using Proposition~\ref{prop:boundW2infty} and the uniform bounds on $\alpha\in\mathcal{A}$, it follows that $\partial_s m\in L^1([t,T];H^1(\R^n))$, and hence $m\in W^{1,1}([t,T];H^1(\R^n))$. Therefore, 
    \begin{align*}
        \|m(\cdot,s_2)-m(\cdot,s_1)\|_{H^1}
        &\le \int_{s_1}^{s_2}\|\diver(\alpha(\cdot,\tau)m(\cdot,\tau))\|_{H^1}\,d\tau\\
        &\le \int_{s_1}^{s_2}\Bigl(
        \|\alpha(\cdot,\tau)\cdot\nabla m(\cdot,\tau)\|_{L^2}
        +\|m(\cdot,\tau)\,\diver\alpha(\cdot,\tau)\|_{L^2}\\
        &\hspace{3.25em}
        +\|\nabla(\alpha(\cdot,\tau)\cdot\nabla m(\cdot,\tau))\|_{L^2}
        +\|\nabla(m(\cdot,\tau)\,\diver\alpha(\cdot,\tau))\|_{L^2}
        \Bigr)\,d\tau\\
        &\le C(M,n)\int_{s_1}^{s_2}\|m(\cdot,\tau)\|_{H^2}\,d\tau,
    \end{align*}
    and the conclusion follows.\\ 
    \emph{ii)} Assume $t_1\le t_2$. Define $\bar m:=m(\cdot,t_2;\alpha,m_1,t_1)$. Then, for every $s\in [\max\{t_1,t_2\},T]$,
    \begin{equation*}\label{eq:ii1}
        \|m(\cdot,s;\alpha,m_1,t_1) - m(\cdot,s;\alpha,m_2,t_2)\|_{L^2} = \|m(\cdot,s;\alpha,\bar{m},t_2) - m(\cdot,s;\alpha,m_2,t_2)\|_{L^2}=:\|u(\cdot,s)\|_{L^2}.
    \end{equation*}
    Note that both densities satisfy \eqref{eq:continuity} with the same vector field; then it follows from \eqref{eq:representation}, using \ref{eq:flowregularity5}, that (using $\Psi_{t_2,s}$ as a change of variables)
    \begin{align*}
      \|u(\cdot,s)\|_{L^2}^2 & = \int_{\R^n} \left[\frac{\bar m(\Psi_{t_2,s}(x)) -m_2(\Psi_{t_2,s}(x)) }{\det D\Phi_{t_2,s}(\Psi_{t_2,s}(x))}\right]^2\;dx\\
      & = \int_{\R^n} \frac{(\bar m(y)-m_2(y))^2}{\det D\Phi_{t_2,s}(y)}\;dy \leq e^{nMT}\|\bar{m}-m_2\|_{L^2}^2.
    \end{align*}
    Therefore, using item~i), we obtain
    \begin{align*}
        \|u(\cdot,s)\|_{L^2} & \leq e^{\frac{nMT}{2}} \left[ \|\bar{m}-m_1\|_{L^2} + \|m_1-m_2\|_{L^2} \right]\\
        & \leq e^{\frac{nMT}{2}} \left[ C(M,n)\left( \sup_{\tau\in (t_1,t_2)}\|m(\cdot,\tau;\alpha,m_1,t_1)\|_{H^2} \right)|t_1-t_2| + \|m_1-m_2\|_{L^2} \right]
    \end{align*} 
\end{proof}

\begin{proposition}\label{prop:energy-ex}
    There exists a constant $C = C(M,T,n)$ such that for every $m:\R^n\times [t,T]\to \R$ a solution of \eqref{eq:continuity} with initial datum $\bar{m}\in H^1(\R^n)$ and velocity field $\alpha\in\mathcal{A}$, then
    \[  
        \|m(\cdot,s)\|_{H^1} \leq e^{C(s-t)}\|\bar{m}\|_{H^1}\qquad \forall s\in [t,T],
    \]
\end{proposition}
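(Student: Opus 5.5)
The plan is a standard energy estimate: differentiate $\|m(\cdot,s)\|_{L^2}^2$ and $\|\nabla m(\cdot,s)\|_{L^2}^2$ in $s$ along the continuity equation, then close with Grönwall. I would first work with smooth data, say $\bar m\in H^2(\R^n)$ or even $C_c^\infty$. For such data, Proposition~\ref{prop:boundW2infty} and the argument of Proposition~\ref{prop:useful}\,i) give $m\in W^{1,1}([t,T];H^1(\R^n))$, which justifies the time differentiation and the integrations by parts. The general case $\bar m\in H^1$ then follows by density: the constant will depend only on $(M,T,n)$, and the map $\bar m\mapsto m(\cdot,s)$ is continuous in $L^2$ by Proposition~\ref{prop:useful}\,ii), with its explicit $L^2$ bound $e^{nMT/2}$. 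Weak lower semicontinuity of the $H^1$ norm passes the estimate to the limit.

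\emph{$L^2$ part.} Write $\partial_s m=-\alpha\cdot\nabla m-m\,\diver\alpha$ and compute
\[
\tfrac12\tfrac{d}{ds}\|m\|_{L^2}^2=-\int m\,\alpha\cdot\nabla m\,dx-\int m^2\diver\alpha\,dx=-\tfrac12\int m^2\diver\alpha\,dx,
\]
using $\int m\,\alpha\cdot\nabla m=\tfrac12\int\alpha\cdot\nabla(m^2)=-\tfrac12\int m^2\diver\alpha$. This is bounded by $\tfrac{nM}{2}\|m\|_{L^2}^2$.

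\emph{Gradient part.} Differentiating the equation in $x_i$ gives
\[
\partial_s\partial_i m+\alpha\cdot\nabla\partial_i m+\partial_i\alpha\cdot\nabla m+\partial_i m\,\diver\alpha+m\,\partial_i\diver\alpha=0.
\]
Multiply by $\partial_i m$, integrate, and sum over $i$. The transport term $\int\partial_i m\,\alpha\cdot\nabla\partial_i m$ is the only dangerous one, since it carries a second derivative of $m$. I handle it exactly as in the $L^2$ case: it equals $-\tfrac12\int|\partial_i m|^2\diver\alpha$, so it is bounded by $C\|\nabla m\|_{L^2}^2$. The remaining terms are controlled by $\|D\alpha\|_{L^\infty}\|\nabla m\|_{L^2}^2$ and by $\|D^2\alpha\|_{L^\infty}\|m\|_{L^2}\|\nabla m\|_{L^2}$. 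Both coefficients are at most $M$ by Assumption~\ref{ass:alpha}, since $\|a\|_{W^{3,\infty}}\le M$.

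Summing the two parts yields
\[
\tfrac{d}{ds}\|m(\cdot,s)\|_{H^1}^2\le 2C\|m(\cdot,s)\|_{H^1}^2
\]
with $C=C(M,n)$. Grönwall's lemma then gives the claim, with $s\mapsto\|m(\cdot,s)\|_{H^1}^2$ absolutely continuous.

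The main obstacle is rigor rather than the estimate itself. The integration by parts on $\int\partial_i m\,\alpha\cdot\nabla\partial_i m$ needs $m(\cdot,s)\in H^2$ together with decay at infinity, which is why I regularize the data first. Care is also needed that the regularity in time ($\alpha$ is only $L^2$ in time, but $\tilde{\mathcal A}$-bounded for a.e.\ $s$) suffices for absolute continuity of the energy. This holds because the right-hand side bounds are uniform for a.e.\ $s$. As a fallback, one could bypass this entirely with the representation formula \eqref{eq:representation} and the flow bounds of Proposition~\ref{prop:appendix}, as in the $H^1$ computation in Proposition~\ref{prop:boundW2infty}. That route gives a bound of the form $C\|\bar m\|_{H^1}$, which can be rewritten as $e^{C(s-t)}$-type only after the semigroup (DPP) argument over small steps. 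I therefore prefer the energy method.
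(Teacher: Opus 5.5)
Your proof is correct, but it takes a different route from the paper. The paper never differentiates an energy in time. It works directly with the pushforward formula \eqref{eq:representation}: it writes $\nabla m(\cdot,s)$ by the chain rule in terms of $\nabla\bar m\circ\Psi_{t,s}$, $D\Psi_{t,s}$ and $\nabla J_{t,s}$, and bounds the $L^\infty$ factors with \ref{eq:flowregularity2} and \ref{eq:flowregularity7}. It controls the $L^2$ factors by the change of variables $y=\Psi_{t,s}(x)$ and the Jacobian bound \ref{eq:flowregularity5}; the $L^2$ part is the same computation as in Proposition~\ref{prop:useful}\,ii).

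Your energy/Gr\"onwall argument uses only pointwise bounds on $\alpha$, $D\alpha$, $D^2\alpha$. It gives the exponential form automatically, with factor exactly $1$ at $s=t$ and even a constant depending only on $(M,n)$. The price is two extra steps, both of which you handle properly:
\begin{itemize}
\item the regularization to $H^2$ data, where $m(\cdot,s)\in H^2$ and $m\in W^{1,1}([t,T];H^1)$ justify the time differentiation and the integration by parts of the transport term $\int\partial_i m\,\alpha\cdot\nabla\partial_i m$;
\item the density and weak lower semicontinuity argument that passes the estimate to general $H^1$ data.
\end{itemize}

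The representation route needs no approximation, since $\nabla(\bar m\circ\Psi_{t,s})$ makes sense directly for $\bar m\in H^1$. However, a uniform bound on $\nabla J_{t,s}$ alone only gives $\|m(\cdot,s)\|_{H^1}\le K\|\bar m\|_{H^1}$. To reach $e^{C(s-t)}$ one must also use that the coefficient of $\|\bar m\|_{L^2}$ vanishes at $s=t$, namely $\|\nabla J_{t,s}\|_{L^\infty}\le C(s-t)$. This follows from $J_{t,s}=\exp\bigl(-\int_t^s\diver\alpha(\Phi_{t,\tau},\tau)\,d\tau\bigr)$, and it is what the paper's ``possibly adjusting the constants'' implicitly relies on.

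For the same reason, your parenthetical remark on the fallback is slightly off. Iterating a bound with a fixed $K>1$ over $N$ small time steps gives $K^N$, not an exponential in $s-t$. What makes that route work is a per-step constant of the form $1+O(s-t)$, which the representation formula provides directly, with no semigroup argument needed. This does not affect your main proof.
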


\begin{proof}
    With estimates similar to those in the proof of Proposition~\ref{prop:useful}, item~ii), one shows that
    $\displaystyle \|m(\cdot,s)\|_{L^2} \leq e^{\frac{nM}{2}(s-t)}\|\bar{m}\|_{L^2}$. Moreover, using \eqref{eq:representation} and a direct computation,
    \[  
        \left\|\nabla m(\cdot,s)\right\|_{L^2} \leq \left\| \frac{\nabla \bar{m}\circ \Psi_{t,s}}{\det D\Phi_{t,s}\circ \Psi_{t,s}}  \right\|_{L^2}\|D\Psi_{t,s}\|_{L^\infty} + \left\| \bar{m}\circ \Psi_{t,s} \right\|_{L^2} \left\|\nabla\left( \det D\Phi_{t,s}\circ \Psi_{t,s} \right)^{-1} \right\|_{L^\infty}.
    \]
    Now, the $L^\infty$-norm terms are uniformly bounded by Proposition~\ref{prop:appendix} (in particular \ref{eq:flowregularity2} and \ref{eq:flowregularity7}), while the $L^2$-norm terms, by the usual change of variables, are estimated respectively by terms of the form
    $e^{C_1(s-t)}\|\nabla \bar{m}\|_{L^2}$ and $e^{C_2(s-t)}\|\bar{m}\|_{L^2}$ with uniform constants $C_1,C_2>0$. Summing the contributions, and possibly adjusting the constants, yields the claim.
\end{proof}

\subsection{An invariant and compact set for the evolutions}\label{subsec:construction of an invariant}

Following an argument similar to the one in \cite{bagagioloCapuaniMarzufero2025zerosum}, we construct a compact and invariant set $X\subseteq H^1(\R^n)\times [0,T]$ for the evolution of the masses.
\begin{assumption}
    Let $\tilde{\Omega}\subset \R^n$ be open and bounded and let $C_U>0$. We assume that the masses at time $t=0$ are non-negative densities that belong to 
    \[
        X(0):=\left\{ \bar{m}\in W^{2,\infty}(\R^n) : \max\{ \|\bar{m}\|_{W^{2,\infty}},\|\bar{m}\|_{H^1} \}\leq C_U,\;\supp(\bar{m})\subseteq \tilde{\Omega} \right\}.
    \]
\end{assumption}

\noindent It is clear that for every $\bar{m}\in X(0)$ and $\alpha \in \mathcal{A}$ one has
$\supp\big(m(\cdot,t;\alpha,\bar{m},0)\big)\subseteq \tilde{\Omega}(t)$ for all $t\in (0,T]$, where
\[
    \tilde{\Omega}(t):=\left\{ x\in \R^n : \dist (x,\tilde{\Omega})\leq Mt \right\}.
\]
Moreover, by Proposition \ref{prop:boundW2infty}, for every $\bar{m}\in X(0)$ and $\alpha \in \mathcal{A}$,
\[
    \|m(\cdot,t;\alpha,\bar{m},0)\|_{W^{2,\infty}} \leq C\|\bar{m}\|_{W^{2,\infty}} \leq C C_U =: \tilde{C}_U\qquad \forall t\in [0,T]. 
\]
We then define, for each $t\in (0,T]$, the set
\[
    X(t):= \left\{ \bar{m}\in W^{2,\infty}(\R^n) : \supp(\bar{m})\subseteq \tilde{\Omega}(t), \|m(\cdot,s;\alpha,\bar m,t)\|_{W^{2,\infty}}\leq \tilde{C}_U\;\forall s\in [t,T],\;\forall \alpha\in\mathcal{A} \right\},  
\]
and the set
\[
X:=\bigcup_{t\in [0,T]} \big[ X(t)\times \{t\} \big]\subset H^2(\R^n)\times [0,T] \subset H^1(\R^n)\times [0,T],   
\]
since the elements of $X(t)$ belong to $H^2(\R^n)$, being in $W^{2,\infty}(\R^n)$ and having compact support.

\begin{remark}\label{remarkvitale}
    From Proposition \ref{prop:useful}, item \emph{i)}, there exists $L_T>0$, depending only on $M,T,n,C_U$, such that for all $t\in [0,T)$, $\bar{m}\in X(t)$, and $\alpha\in\mathcal{A}$,
    \begin{equation*}
        \|m(\cdot,s_1;\alpha,\bar{m},t)-m(\cdot,s_2;\alpha,\bar m,t)\|_{H^1}
        \le L_T|s_1-s_2|\qquad \forall s_1,s_2\in [t,T].
    \end{equation*}
    Moreover, the constant $\tilde{L}$ cited in the item \emph{ii)} for the $L^2$ (or $H^1$) stability w.r.t. initial data depends only on the same constants, provided $m_1\in X(t_1)$ and $m_2\in X(t_2)$.
\end{remark}

\begin{proposition}\label{prop:X-invariant-compact}
    The set $X$ is invariant for the continuity evolution, in the sense that for every $(\bar m,t)\in X$ and $\alpha\in\mathcal{A}$ one has
    $(m(\cdot,s;\alpha,\bar m,t),s)\in X$ for all $s\in[t,T]$. Moreover, $X$ is compact in the product topology of $H^1(\R^n)\times[0,T]$.
\end{proposition}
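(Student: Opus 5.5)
Invariance rests on the semigroup (flow) property of the evolution. Fix $(\bar m,t)\in X$, $\alpha\in\mathcal A$, $s\in[t,T]$, and set $\hat m:=m(\cdot,s;\alpha,\bar m,t)$. We must check two things. First, $\supp\hat m\subseteq\tilde\Omega(s)$. This holds because $\supp\bar m\subseteq\tilde\Omega(t)$ and the flow moves points with speed at most $\|\alpha\|_{L^\infty}\le M$. Hence $\supp\hat m=\Phi_{t,s}(\supp\bar m)$ lies within distance $M(s-t)$ of $\tilde\Omega(t)$, and therefore within distance $Ms$ of $\tilde\Omega$. Second, for every $\beta\in\mathcal A$ and $\tau\in[s,T]$ we need $\|m(\cdot,\tau;\beta,\hat m,s)\|_{W^{2,\infty}}\le\tilde C_U$. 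To get this, concatenate: define $\gamma:=\alpha$ on $[t,s]$ and $\gamma:=\beta$ on $(s,T]$. Then $\gamma\in\mathcal A$, since membership is only a pointwise a.e. constraint in time together with $L^2$ integrability. Uniqueness of the solution of \eqref{eq:continuity} (equivalently, $\Phi_{t,\tau}=\Phi_{s,\tau}\circ\Phi_{t,s}$ together with \eqref{eq:representation}) gives $m(\cdot,\tau;\beta,\hat m,s)=m(\cdot,\tau;\gamma,\bar m,t)$. The definition of $X(t)$ then bounds this in $W^{2,\infty}$ by $\tilde C_U$. Finally, $\hat m\in W^{2,\infty}$ by the case $\tau=s$.

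For compactness, it suffices to show sequential compactness in $H^1\times[0,T]$, since this is a metric space. Take $(m_k,t_k)\in X$ and pass to a subsequence with $t_k\to t$. All $m_k$ are supported in the fixed bounded set $K:=\tilde\Omega(T)$ and satisfy $\|m_k\|_{W^{2,\infty}}\le\tilde C_U$ (case $s=t$). Hence they are bounded in $H^2(K')$ for a bounded smooth $K'\supset K$. By Rellich--Kondrachov (or Arzelà--Ascoli applied to $m_k$ and $\nabla m_k$, which are equi-Lipschitz), a subsequence converges in $H^1(\R^n)$, and also in $C^1$ uniformly, to some $m$. It then remains to prove $m\in X(t)$; this closedness step is the main obstacle.

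\emph{Support and regularity of the limit.} Since $\supp m_k\subseteq\tilde\Omega(t_k)$ and these closed sets converge (Hausdorff) to $\tilde\Omega(t)$, uniform convergence gives $\supp m\subseteq\tilde\Omega(t)$. Weak-$*$ lower semicontinuity of the $W^{2,\infty}$ norm gives $m\in W^{2,\infty}$.

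\emph{The uniform bound along evolutions.} This is the delicate part, because the times $t_k$ move. Fix $\alpha\in\mathcal A$ and $s\in(t,T]$; the case $s=t$ is the lower-semicontinuity bound above. For $k$ large we have $t_k<s$. By Proposition~\ref{prop:useful}\,ii) together with Remark~\ref{remarkvitale}, applied with initial data $(m,t)$ and $(m_k,t_k)$,
\[
\|m(\cdot,s;\alpha,m,t)-m(\cdot,s;\alpha,m_k,t_k)\|_{L^2}\le\tilde L\big(\|m-m_k\|_{L^2}+|t-t_k|\big)\to0 .
\]
Here one must take care that the constant $\tilde L$ involves the $H^2$ norm of the evolution starting from $(m,t)$ (or from $(m_k,t_k)$, after swapping roles). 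By Proposition~\ref{prop:boundW2infty} and the compact support, this norm is controlled in terms of $\|m\|_{W^{2,\infty}}$ and $|\tilde\Omega(T)|$; alternatively one can use the roles with $(m_k,t_k)\in X$. One also needs $t\le t_k$ or $t_k\le t$ handled symmetrically, which the proof of ii) permits.

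Thus $m(\cdot,s;\alpha,m_k,t_k)\to m(\cdot,s;\alpha,m,t)$ in $L^2$. Each term of this sequence is bounded by $\tilde C_U$ in $W^{2,\infty}$, since $(m_k,t_k)\in X$. The bounded set $\{\|u\|_{W^{2,\infty}}\le\tilde C_U\}$ is closed under $L^2$ convergence: a weak-$*$ limit in $W^{2,\infty}$ must coincide with the $L^2$ limit, and the norm is lower semicontinuous. Therefore $\|m(\cdot,s;\alpha,m,t)\|_{W^{2,\infty}}\le\tilde C_U$, so $m\in X(t)$ and $X$ is compact.
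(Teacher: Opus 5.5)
Your proposal is correct and follows essentially the paper's route. Precompactness comes from the uniform $W^{2,\infty}$ bound on the common support $\tilde\Omega(T)$ plus Rellich. Closedness comes from uniform convergence for the support, and from Proposition~\ref{prop:useful}\,ii) combined with weak-$*$ lower semicontinuity of the $W^{2,\infty}$ norm for the bound along evolutions. You also make explicit several points the paper leaves implicit: the concatenation/semigroup argument for invariance, the case $s=t$, the requirement $t_k<s$, and the dependence of $\tilde L$ on the limit datum.

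One caveat, shared with the paper's proof, concerns the slice $t=0$. There $X(0)$ is the assumption set (bound $C_U$, and $\supp\bar m\subseteq\tilde\Omega$ with $\tilde\Omega$ open), and this set is not closed, since translates of a fixed bump can approach $\partial\tilde\Omega$. So your closedness step, like the paper's, really proves compactness of the set obtained by using the defining formula of $X(t)$ also at $t=0$; correspondingly, at $t=0$ the bound in your invariance step comes from Proposition~\ref{prop:boundW2infty} rather than from the definition.
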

\begin{proof}
    We prove compactness, since the invariance follows from the construction of $X$. \\ 
    Precompactness follows since the projection of $X$ onto the first component (which is clearly $X(T)$) is bounded in $H^2(B(0,R))$ for some $R>0$, and therefore relatively compact in
    $H^1(B(0,R))$. The extension by zero yields precompactness in $H^1(\R^n)$, while the time component ranges in $[0,T]$.\\
    Closedness is obtained by taking $(m_n,t_n)\to(\bar m,\bar t)$ in $H^1(\R^n)\times[0,T]$ and using the uniform $W^{2,\infty}$ bounds for $\{m_n\}$. These yield uniform $L^\infty$
    bounds and equi-Lipschitz continuity, hence (by Ascoli--Arzel\`a) a subsequence converges uniformly to some $\tilde m$. The $H^1$ convergence forces $\tilde m=\bar m$, so $m_n\to\bar m$
    uniformly and $\supp(\bar m)\subseteq \tilde{\Omega}(\bar t)$.\\
    Next, fix $s>\bar t$ and $\alpha\in\mathcal{A}$. Using the $H^1$ version of Proposition \ref{prop:useful}, item ii), one shows that
    \[
        m(\cdot,s;\alpha,m_n,t_n)\to m(\cdot,s;\alpha,\bar m,\bar t)\quad \text{in } H^1(\R^n).
    \]
    On the other hand, $\{m(\cdot,s;\alpha,m_n,t_n)\}_n$ is uniformly bounded in $W^{2,\infty}$, hence (up to a subsequence) converges weak-$\ast$ in $W^{2,\infty}$. The strong $H^1$
    convergence identifies the weak-$\ast$ limit with $m(\cdot,s;\alpha,\bar m,\bar t)$, and weak-$\ast$ lower semicontinuity yields for all $s>\bar t,\;\alpha \in\mathcal{A}$
    \[
        \|m(\cdot,s;\alpha,\bar m,\bar t)\|_{W^{2,\infty}}
        \le \liminf_{n\to\infty}\|m(\cdot,s;\alpha,m_n,t_n)\|_{W^{2,\infty}}
        \le \tilde C_U.
    \]
    Therefore $(\bar m,\bar t)\in X$, so $X$ is closed and hence compact.
\end{proof}

\begin{remark}\label{rem:mod-cont}
    We recall that (see \cite[Subsection 1.3.1]{Grafakos2014ModernFourierAnalysis}) the Sobolev space $H^s(\R^n)$ can be endowed with the equivalent norm
    \[
        \|w\|_{H^s_{\mathcal{F}}}:= \left[ \int_{\R^n} (1+|\xi|^2)^s|\hat{w}(\xi)|^2\;d\xi \right]^{\frac{1}{2}}\qquad \forall w\in H^s(\R^n),
    \]
    where $\hat{w}:\R^n\to \R$ denotes the Fourier transform of $w$.\\ 
    Now, for any $w \in H^2(\R^n)$, using Plancherel's formula,
    \begin{align*}
        \|w\|_{H^1_{\mathcal{F}}}^2
        & =  \int_{\R^n} (1+|\xi|^2)|\hat{w}(\xi)|^2\;d\xi
        \leq \|(1+|\cdot|^2)\hat{w}\|_{L^2}\,\|\hat{w}\|_{L^2} = \|w\|_{H^2_{\mathcal{F}}}\|w\|_{L^2}.
    \end{align*}
    Therefore there exists fixed constants $C_1,C_2>0$ such that for any $m_1,m_2\in X(T)$,
    \begin{align}
        \nonumber \|m_1-m_2\|_{H^1}
        &\leq C_1 \|m_1-m_2\|_{H^1_{\mathcal{F}}}
        \leq  C_1 \sqrt{\|m_1-m_2\|_{H^2_{\mathcal{F}}}}\sqrt{\|m_1-m_2\|_{L^2}} \\
        \label{eq:radice} &\leq C_1\sqrt{2C_2 \tilde{C}_U}
        \sqrt{\|m_1-m_2\|_{L^2}}
        =:C_X \sqrt{\|m_1-m_2\|_{L^2}}.
    \end{align}
    Note that \eqref{eq:radice} also implies that the $L^2$ and $H^1$ topologies are equivalent on $X(T)$.
\end{remark}

\section{The state-constrained optimal control problem}
\label{sec:presentation-problem}

In this section we present the state-constrained optimal control problem in detail and introduce the associated value function.
After some comments about the Dynamic Programming Principle we prove, under our hypotheses, the Lipschitz continuity of the value function in Subsection \ref{subsec:regularity}. In the following, the notation is the same as introduced in the previous sections.\\

\noindent We consider the running cost $\ell: X(T) \times \tilde{\mathcal{A}}\to [0,+\infty)$ and the final cost $\psi:X(T)\to [0,+\infty)$ which induce the Bolza-type cost functional $J:X\times \mathcal{A} \to [0,+\infty)$  
\[
    J(\bar{m},t,\alpha):= \int_{t}^{T} \ell(m(\cdot,s;\alpha,\bar{m},t),\alpha(\cdot,s))\;ds+\psi(m(\cdot,T;\alpha,\bar{m},t)).
\]
\begin{assumption}\label{ass:ellandpsi}
        We assume that $\ell$ and $\psi$ are bounded (by some constant $N_\ell,N_\psi>0$, respectively), continuous and that there exists $L_\ell,L_\psi>0$ such that
        \begin{enumerate}
            \item[i)] $\displaystyle  |\ell(m_1, a)-\ell (m_2, a)|\leq L_\ell\|m_1-m_2\|_{L^2}\qquad \forall m_1,m_2\in X(T),\;\forall a\in \tilde{\mathcal{A}},$
            \item[ii)] $\displaystyle |\psi(m_1)-\psi(m_2)|\leq L_\psi \|m_1-m_2\|_{L^2}\qquad \forall m_1,m_2\in X(T).$
        \end{enumerate}
\end{assumption}

\noindent Let $\Omega\subset \tilde{\Omega}$ open and $\delta>0$ be fixed. We are interested in minimizing the cost $J$ under a state constraint that enforces the mass, weighted by a fixed positive function $p:\R^n\to \R$, to be somehow concentrated in $\Omega$ at every time, namely
\begin{equation}\label{eq:constraint}
    \int_{\R^n\setminus\Omega} p(x)m(x)\;dx \leq \delta.
\end{equation}
 
\begin{assumption}\label{ass:Omegaandweight}
    For explicit computations, we assume $\Omega\subset \R^n$ is convex with boundary regular enough for the divergence theorem, and set $p(x):=\dist(x,\Omega)$; thus the constraint is $\langle p,m\rangle:=\langle p,m \rangle_{L^2(\tilde{\Omega}(T))}\leq \delta$. The same analysis applies to more general domains and weight functions (see Remark \ref{rem:generalization domain and weight}).
\end{assumption}

\begin{notation}
    For $t\in[0,T]$, set
    $X_c(t):= \{ m\in X(t): \langle p,m\rangle<\delta \}$ and
    $\bar{X}_c(t):= \{ m\in X(t): \langle p,m\rangle\le\delta \}$,
    and define
    \[
        X_c:= \bigcup_{t\in [0,T]} X_c(t)\times \{t\},\qquad
        \bar{X}_c:= \bigcup_{t\in [0,T]} \bar{X}_c(t)\times \{t\}.
    \]
    Note that $\displaystyle \bar{X}_c=X\cap \left( \left\{ m\in H^1(\R^n) : \langle p,m\rangle\leq \delta \right\}\times[0,T] \right)$; then by Proposition \ref{prop:X-invariant-compact} it follows that $\bar{X}_c$ is compact in $H^1(\R^n)\times [0,T]$.
\end{notation}

 \noindent We are interested in minimizing the cost only over those controls in $\mathcal{A}$ that ensure the state constraint is never violated. Thus, for each $(\bar{m},t)\in \bar{X}_c$ we define
\[
    \mathcal{A}(\bar{m},t):=\left\{ \alpha \in \mathcal{A} : (m(\cdot,s;\alpha,\bar{m},t),s) \in \bar{X}_c \;\;\forall s\in[t,T] \right\},
\]
and note that at least the null constant control $\alpha\equiv 0$ always belongs to  $\mathcal{A}(\bar{m},t)$.\\  
The value function of the constrained problem is then
\[
    V:\bar{X}_c\to [0,+\infty)\qquad\qquad (\bar{m},t)\mapsto \inf_{\alpha\in \mathcal{A}(\bar{m},t)} J(\bar{m},t,\alpha).
\]

\noindent Relying on the semigroup property of the constrained dynamics and on the stability of the admissible class under concatenation of controls, we can follow the same argument as in the finite-dimensional Bolza setting. Therefore, see for example \cite[Chapter~III]{BardiCapuzzoDolcetta1997}, the value function $V$ satisfies the classical Dynamical Programming Principle stated below.

\begin{theorem}[DPP]\label{th:dpp}
    Given $(\bar{m},t)\in \bar{X}_c$, then for all $\tau \in [t,T]$
    \begin{equation}\label{eq:dpp}
         V(\bar{m},t) = \inf_{\alpha \in \mathcal{A}(\bar{m},t)} \left[ \int_t^{\tau} \ell(m(\cdot,s;\alpha,\bar{m},t),\alpha(\cdot,s))\;ds+V( m(\cdot,\tau;\alpha,\bar{m},t),\tau  ) \right].
    \end{equation}
\end{theorem}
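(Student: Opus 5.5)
We prove the two inequalities separately, writing $W(\bar m,t)$ for the right-hand side of \eqref{eq:dpp}. Two structural facts are used throughout. The first is the \emph{semigroup property}: by uniqueness of solutions of \eqref{eq:continuity} (the representation \eqref{eq:representation}), for $\alpha\in\mathcal{A}$ and $t\le\tau\le s$ we have $m(\cdot,s;\alpha,\bar m,t)=m(\cdot,s;\alpha,m(\cdot,\tau;\alpha,\bar m,t),\tau)$. This holds because the flow satisfies $\Phi_{t,s}=\Phi_{\tau,s}\circ\Phi_{t,\tau}$. The second is \emph{stability under restriction and concatenation} of admissible controls. If $\alpha\in\mathcal{A}(\bar m,t)$, then its restriction to $[\tau,T]$ (extended arbitrarily, e.g. by $\alpha$ itself, on $[0,\tau)$) belongs to $\mathcal{A}(m(\cdot,\tau;\alpha,\bar m,t),\tau)$. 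Conversely, if $\alpha\in\mathcal{A}(\bar m,t)$ and $\beta\in\mathcal{A}(m_\tau,\tau)$ with $m_\tau:=m(\cdot,\tau;\alpha,\bar m,t)$, then the concatenation $\gamma:=\alpha\mathbbm{1}_{[0,\tau)}+\beta\mathbbm{1}_{[\tau,T]}$ lies in $\mathcal{A}$ and in $\mathcal{A}(\bar m,t)$. Indeed, $\gamma$ is still in $L^2([0,T];W^{3,\infty}\cap H^1)$ with values a.e. in $\tilde{\mathcal{A}}$, since the defining conditions are pointwise a.e. in time. Moreover, by the semigroup property its trajectory coincides with that of $\alpha$ on $[t,\tau]$ and with that of $\beta$ from $m_\tau$ on $[\tau,T]$, so it stays in $\bar X_c$. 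Note also that $(m_\tau,\tau)\in\bar X_c$, so $V(m_\tau,\tau)$ is defined; this follows from admissibility together with the invariance in Proposition \ref{prop:X-invariant-compact}.

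\emph{Inequality $V\ge W$.} Fix $\alpha\in\mathcal{A}(\bar m,t)$. Split $J(\bar m,t,\alpha)$ at $\tau$ and use the semigroup property to write the tail as $J(m_\tau,\tau,\alpha|_{[\tau,T]})$. Since the restricted control is admissible from $(m_\tau,\tau)$, the tail is at least $V(m_\tau,\tau)$. Hence
\[
J(\bar m,t,\alpha)\ge\int_t^\tau\ell(m(\cdot,s;\alpha,\bar m,t),\alpha(\cdot,s))\,ds+V(m_\tau,\tau),
\]
and taking the infimum over $\alpha$ gives $V(\bar m,t)\ge W(\bar m,t)$. The integral is well defined because $\ell$ is bounded. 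Measurability of $s\mapsto\ell(m(\cdot,s),\alpha(\cdot,s))$ follows from the continuity of $\ell$, the continuity of $s\mapsto m(\cdot,s)$ (Remark \ref{remarkvitale}), and the strong measurability of $\alpha$.

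\emph{Inequality $V\le W$.} Fix $\eps>0$ and $\alpha\in\mathcal{A}(\bar m,t)$. Choose $\beta\in\mathcal{A}(m_\tau,\tau)$ with $J(m_\tau,\tau,\beta)\le V(m_\tau,\tau)+\eps$; this set is nonempty since it contains $\beta\equiv0$. The concatenation $\gamma$ is admissible from $(\bar m,t)$, so
\[
V(\bar m,t)\le J(\bar m,t,\gamma)=\int_t^\tau\ell(m(\cdot,s;\alpha,\bar m,t),\alpha(\cdot,s))\,ds+J(m_\tau,\tau,\beta).
\]
The right-hand side is at most $\int_t^\tau\ell\,ds+V(m_\tau,\tau)+\eps$. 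Taking the infimum over $\alpha$ and letting $\eps\to0$ gives $V\le W$. No continuity of $V$ is needed here, since we select an $\eps$-optimal $\beta$ only for the single point $m_\tau$ determined by each fixed $\alpha$.

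The main point to check carefully is the concatenation step: that $\gamma$ genuinely belongs to $\mathcal{A}$ and that the trajectory under $\gamma$ is the glued one. The first reduces to the a.e.-in-time definition of $\mathcal{A}$. The second follows from uniqueness of the flow \eqref{eq:flow equation}, whose right-hand side is Lipschitz in $x$ uniformly in $s$ with $L^2$-in-time control, hence Carathéodory. Everything else is the standard finite-dimensional Bolza argument of \cite[Chapter~III]{BardiCapuzzoDolcetta1997}.
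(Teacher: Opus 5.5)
Your proof is correct and takes the same route as the paper. The paper does not write the argument out: it appeals to the semigroup property of the dynamics, the stability of the admissible class under restriction and concatenation, and the standard Bolza argument of \cite[Chapter~III]{BardiCapuzzoDolcetta1997}, which is exactly what you carry out, with the well-posedness and measurability checks made explicit. As a minor point, $(m_\tau,\tau)\in\bar X_c$ follows directly from the definition of $\mathcal{A}(\bar m,t)$ at $s=\tau$, so you do not need the invariance of $X$ there.
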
 

\subsection{Value function regularity}\label{subsec:regularity}
In this subsection we prove that, under Assumptions \ref{ass:ellandpsi} and \ref{ass:Omegaandweight}, the value function is Lipschitz continuous on $\bar{X}_c$.
Following Soner's state-constraint strategy \cite{Soner1986StateConstraintI}, we combine DPP \eqref{eq:dpp} and a trajectories correction by a suitable pushback control near the boundary to obtain Lipschitz estimates for $V$.\\

\noindent In order to construct that suitable pushback control, we first need the following inequality. Fixed $x_\Omega \in \Omega$, we observe that
\begin{equation}\label{eq:uniform}
    \nabla \dist(x,\Omega) \cdot (x_\Omega-x) \leq -\dist(x_\Omega,\partial\Omega) =: -r_0<0\qquad \text{for a.e. }x\in \Omega^c.
\end{equation}
First of all note that $\nabla \dist(\cdot,\Omega)$ exists a.e. since $x\mapsto \dist(x,\Omega)$ is 1-Lipschitz; furthermore, let $x\in \Omega^c$ be a differentiability point and let $y\in\partial\Omega$ be its (unique) projection onto $\Omega$. By convexity there exists a supporting hyperplane with outward unit normal $n(y)$ such that
\begin{equation*}
    n(y)\cdot (z-y) \leq 0 \qquad \forall z\in \Omega.
\end{equation*}
Let $\delta \in (0,r_0)$. Using $ z_\delta:=x_\Omega + (r_0-\delta)n(y)\in \Omega$ above we obtain
\[
    0\geq n(y) \cdot(z_\delta-y) = n(y) \cdot (x_\Omega-y) + (r_0-\delta ) \;\; \forall \delta \in (0,r_0)\Rightarrow n(y) \cdot (x_\Omega-y) \leq -r_0.  
\] 
Since $x = y + \dist(x,\Omega) n(y)$ and $n(y)=\nabla \dist(x,\Omega)$, we obtain \eqref{eq:uniform}; indeed,
\[ 
    \nabla \dist(x,\Omega) \cdot (x_\Omega-x) =  n(y) \cdot (x_\Omega-y) - \dist(x,\Omega) \leq -r_0.
\]
Now fix $\eps\ll1$ and let $\phi\in C^\infty(\R^n)$ be a cutoff function such that
\[
    \phi(x)=
    \begin{cases}
    1 & \text{if }x\in \overline{\tilde{\Omega}(T)},\\
    0 & \text{if }x\in \{x\in\R^n:\operatorname{dist}(x,\tilde{\Omega}(T))\ge \eps\},
    \end{cases}
\]
so that $x\mapsto \phi(x) (x_\Omega-x) \in W^{3,\infty}(\R^n)$. After rescaling we define the constant (in time) \emph{pushback control}  
\begin{equation}\label{eq:pushback-control-def}
    a_p:\R^n\to \R^n \qquad x\mapsto a_p(x):=\bar{M}\phi(x)(x_\Omega-x),
\end{equation}
where $\bar{M}\ll1$ is such that $a_p\in\tilde{\mathcal{A}}$; by construction we have, by \eqref{eq:uniform}, (recall that $p(x) = \dist(x,\Omega) $)
\begin{equation}\label{eq:pushback inequality}
    \nabla p(x) \cdot a_p(x) \leq -\bar{M} r_0 < 0\qquad \text{for a.e. }x\in \tilde{\Omega}(T)\setminus \Omega.
\end{equation}

\begin{notation}
    Let $\eta: L^2(\tilde{\Omega}(T))\supset X(T) \to \R$ be defined by $m\mapsto \eta(m):=\displaystyle \delta - \langle p,m \rangle $. Clearly for any $m\in X(T)$ we have $m\in \bar{X}_c(T)$ if and only if $\eta(m)\geq 0$.
\end{notation}

\begin{lemma}\label{lemma:remake soner}
        There exist $t^*,C^*>0$ such that for all $(m_0,t_0)\in\bar{X}_c$ and $\alpha\in\mathcal{A}$, there exists $\bar{\alpha}\in \mathcal{A}(m_0,t_0)$ for which the following holds:
        \begin{equation}\label{da dim}
            \left| J_{h}(m_0,t_0,\bar\alpha)-J_{h}(m_0,t_0,{\alpha}) \right| \leq C^* \max \left\{0,\sup_{t_0\leq s\leq t_0+t^*} \big[-\eta(m(\cdot,s;\alpha,m_0,t_0))\big] \right\}=:C^*\eps
        \end{equation}
        for all $h\in (0,t^*]$, where $\displaystyle J_{h}(m_0,t_0,\alpha):=\int_{t_0}^{t_0+h}\ell(m(\cdot,t;\alpha,m_0,t_0),\alpha(\cdot,t))\;dt$. In \eqref{da dim}, we extend $\alpha$ to $[t_0,+\infty)\times \R^n$, if necessary, by setting $\alpha(\cdot,t)\equiv 0$ for all $t>T$.
\end{lemma}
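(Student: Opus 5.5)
We adapt Soner's construction to the density setting. The modified control $\bar\alpha$ is obtained from $\alpha$ by switching to the pushback field $a_p$ of \eqref{eq:pushback-control-def} only when needed, and then resuming $\alpha$.

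\textbf{Choice of $t^*$.} First fix $t^*$ small enough that on $[t_0,t_0+t^*]$ all evolutions move little, uniformly over $X$. By Remark~\ref{remarkvitale}, for any control in $\mathcal{A}$ we have $|\eta(m(\cdot,s))-\eta(m(\cdot,s'))|\le \|p\|_{L^2(\tilde\Omega(T))}L_T|s-s'|$.

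\textbf{Quantitative decay of the constraint functional.} Next compute the time derivative of $\langle p,m\rangle$ along a continuity evolution. Integrating by parts (the support of $m$ is compact and $p$ is Lipschitz),
\[
\frac{d}{ds}\langle p,m(\cdot,s)\rangle=\int_{\R^n}\nabla p\cdot \alpha(\cdot,s)\,m(\cdot,s)\,dx .
\]
Since $\nabla p=0$ in $\Omega$, under $a_p$ inequality \eqref{eq:pushback inequality} gives
\[
\frac{d}{ds}\langle p,m\rangle\le -\bar M r_0\int_{\Omega^c}m\,dx .
\]
Here I need a lower bound for $\int_{\Omega^c}m$ in terms of $\langle p,m\rangle$. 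Since $p\le \operatorname{diam}\tilde\Omega(T)=:D$ on the support, we get $\int_{\Omega^c}m\ge \langle p,m\rangle/D$. Near the boundary, where $\langle p,m\rangle\ge\delta/2$, this yields
\[
\frac{d}{ds}\eta\ge \bar M r_0\delta/(2D)=:\kappa>0 ,
\]
a uniform rate at which the pushback restores the constraint. I expect this to be the main obstacle. The derivative of $\langle p,m\rangle$ is linear in the control, so I would instead use a convex combination $\bar\alpha=\alpha+\lambda(a_p-\alpha)$, or more simply run $a_p$ alone for a short time. The field must also stay in $\tilde{\mathcal A}$; this holds because $\tilde{\mathcal A}$ is convex.

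\textbf{Construction of $\bar\alpha$.} Set $\eps$ as in \eqref{da dim}. If $\eps=0$, take $\bar\alpha=\alpha$ on $[t_0,t_0+t^*]$ and leave the control unchanged beyond. If $\eps>0$, proceed as follows.
\begin{itemize}
\item Apply $a_p$ on $[t_0,t_0+\tau]$ with $\tau:=c\,\eps$, for $c$ chosen from $\kappa$ and from the Lipschitz bound on $\eta$. This raises the slack $\eta$ by at least $\kappa\tau$ above the level reached without pushing.
\item Afterwards use the time-shifted control $\alpha(\cdot,\cdot-\tau)$ on $[t_0+\tau,T]$. By stability, Proposition~\ref{prop:useful} ii) with Remark~\ref{remarkvitale}, the shifted trajectory stays within $\tilde L\,\tau\,\|p\|$ of the original trajectory in the $\eta$-value.
\end{itemize}

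Choosing $c$ large, relative to $1/\kappa$ and the stability constants, gives $\eta\ge 0$ on $[t_0,t_0+t^*]$. This works because the original trajectory violates the constraint by at most $\eps$ there. Beyond $t_0+t^*$ I would repeat the argument, or simply switch to the null control, which preserves $\langle p,m\rangle$; more carefully, I would stop at the first time the slack would be lost and freeze with $\alpha\equiv0$, which is always admissible. This gives $\bar\alpha\in\mathcal A(m_0,t_0)$.

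\textbf{Cost estimate.} Split $J_h(m_0,t_0,\bar\alpha)-J_h(m_0,t_0,\alpha)$ into three pieces.
\begin{itemize}
\item The pushback interval of length $\tau$ is bounded by $2N_\ell\tau$.
\item The shift mismatch at the end of the window is also bounded by $2N_\ell\tau$.
\item On the common portion, Assumption~\ref{ass:ellandpsi} i) combined with $L^2$ stability bounds the difference by $L_\ell\tilde L\,\tau\, t^*$.
\end{itemize}
Since $\tau=c\eps$, the total is at most $C^*\eps$ uniformly in $h\in(0,t^*]$.
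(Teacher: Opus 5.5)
There is a genuine gap in the step meant to give $\eta(\bar m(\cdot,s))\ge 0$ after the pushback. For $s\ge t_0+\tau$ set $g(s):=\eta(\bar m(\cdot,s))-\eta(m(\cdot,s-\tau))$. The pushback gives $g(t_0+\tau)=\eta(\bar m(\cdot,t_0+\tau))-\eta(m_0)\ge\kappa\tau$. Since $\eta(m(\cdot,s-\tau))\ge-\eps$, you need $g(s)\ge\eps$ on the rest of the window. The only tool you invoke is the zeroth-order stability estimate of Proposition~\ref{prop:useful}~ii), which gives $|g(s)|\le K\tau$ with $K\approx\|p\|_{L^2}\tilde L L_T$.

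\begin{itemize}
\item Used directly, this ignores the gain altogether: you only get $\eta(\bar m(\cdot,s))\ge-\eps-K\tau$.
\item Used to bound the erosion of the gain, $|g(s)-g(t_0+\tau)|\le 2K\tau$, it yields $\eta(\bar m(\cdot,s))\ge(\kappa-2K)c\eps-\eps$. This is nonnegative for large $c$ only if $\kappa>2K$.
\end{itemize}

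Nothing in the hypotheses ensures $\kappa>2K$. The rate $\kappa=\bar M r_0\delta/(2D)$ is small because $\bar M\ll1$, whereas $K$ comes from a Cauchy--Schwarz bound on an $L^2$ distance controlled by $L_T$ and $\tilde L$. Since gain and loss are both linear in $\tau=c\eps$, enlarging $c$ cannot close the argument.

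What is missing is a derivative-level estimate on $g$ that uses the smallness of $t^*$. For $s>t_0+\tau$ both densities are transported by the same field $\alpha(\cdot,s-\tau)$, so
\[
g'(s)=-\int_{\R^n}\nabla p\cdot\alpha(\cdot,s-\tau)\,\bigl(\bar m(\cdot,s)-m(\cdot,s-\tau)\bigr)\,dx,\qquad |g'(s)|\le C\|\bar m(\cdot,s)-m(\cdot,s-\tau)\|_{L^2}\le C'\tau .
\]
The last bound holds because the difference solves the continuity equation with initial size $\|\bar m(\cdot,t_0+\tau)-m_0\|_{L^2}\le L_T\tau$. Hence $g(s)\ge\kappa\tau-C'\tau t^*$ on the whole window. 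Choosing $t^*\le\kappa/(2C')$ and then $c=2/\kappa$ gives $\eta(\bar m(\cdot,s))\ge-\eps+\kappa\tau/2=0$. This is exactly the mechanism of \eqref{eq:troncato}--\eqref{primopez} in the paper, with $t^*\le\ln\bigl(C_1/(2C_2)+1\bigr)/C$ and $k=2/C_1$. It is the smallness of $t^*$, not the size of $c$, that protects the gain.

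Two smaller points must also be made explicit.
\begin{itemize}
\item You push from $t_0$ rather than from the first violation time $t_e$ (as the paper does), so the rate $\kappa$ requires $\langle p,\bar m(\cdot,s)\rangle\ge\delta/2$ on $[t_0,t_0+\tau]$. This follows because $\eps>0$ forces $\langle p,m_0\rangle>\delta-L_\eta t^*$, so $t^*\le\delta/(4L_\eta)$ suffices.
\item If $c\eps>t^*$, simply push on the whole window. This is admissible because $\eta$ is nondecreasing under $a_p$ when $m\ge0$.
\end{itemize}

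With these repairs, your variant is a valid and slightly simpler alternative to the paper's construction: push from $t_0$, then freeze with the null control after $t_0+t^*$ instead of iterating. Your cost estimate agrees with the paper's.
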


\begin{proof}
    Let $t^*,k>0$ be fixed (we will specify them later), and let $(m_0,t_0)\in \bar{X}_c$, $\alpha\in\mathcal{A}$; set $m(\cdot,\cdot):=m(\cdot,\cdot;\alpha,m_0,t_0)$. We define $\displaystyle t_e:=\inf\{t\in[t_0,T]:\eta(m(\cdot,t))<0\}-t_0$, which represents the amount of time before the constraint is possibly violated under the control $\alpha$ (with the standard assumption $\inf\emptyset = +\infty$). Note that both $t_e$ and $\eps$ depend on the triple $(m_0,t_0,\alpha)$.\\
    We now define a control $\bar{\alpha}$ on the interval $[t_0,t_0+t^*]$ as follows. Set
    \[
        t_1:=\min\{t_e,t^*\},\qquad t_2:=\min\{t^*,\,t_e+k\eps\},
    \]
    and define
    \[
        \bar{\alpha}(\cdot,t):=
        \chi_{[t_0,t_0+t_1)}(t)\alpha(\cdot,t)
        +\chi_{[t_0+t_1,\,t_0+t_2]}(t)a_p
        +\chi_{(t_0+t_2,\,t_0+t^*]}(t)\alpha(\cdot,t-k\eps).
    \]
    where $a_p\in\tilde{\mathcal{A}}$ is the pushback control \eqref{eq:pushback-control-def}; we denote $m(\cdot,\cdot;\bar\alpha,m_0,t_0)$ by $\bar{m}(\cdot,\cdot)$. We claim that $\eta(\bar{m}(\cdot,t))\geq 0$ for all $t\in[t_0,t_0+t^*]$.\\

    \noindent If $t_e>t^*$, then $\bar{\alpha}=\alpha$ on $[t_0,t_0+t^*]$ and the claim is immediate.\\ 
    We now treat the case $t_e\leq t^*\leq t_e+k\eps$. Clearly $\eta(\bar{m}(\cdot,t))\geq 0$ for every $t\in [t_0,t_0+t_e]$. \\
    For almost every $t\in (t_0+t_e,t_0+t^*)$ we compute, by \eqref{eq:continuity},the divergence theorem and the definition of $\eta$,
    \begin{align}
        \label{eq:sopra}\frac{d}{dt}\eta(\bar{m}(\cdot,t)) 
        & = -\int_{\R^n}p(x)\partial_t\bar{m}(x,t)\;dx = \int_{\R^n}p(x)\diver(\bar\alpha(x,t)\bar{m}(x,t))\;dx\\
        \nonumber & = \int_{\tilde{\Omega}(T)\setminus\Omega}p(x)\diver(\bar\alpha(x,t)\bar{m}(x,t))\;dx =
        - \int_{\tilde{\Omega}(T)\setminus\Omega} \big( \nabla p(x)\cdot \bar\alpha(x,t) \big)\bar{m}(x,t)\;dx,
    \end{align}
    Using \eqref{eq:pushback inequality}, we obtain $\displaystyle \frac{d}{dt}\eta(\bar{m}(\cdot,t)) \geq \bar{M}r_0 \int_{\tilde{\Omega}(T)\setminus\Omega} \bar{m}(x,t)\;dx$.\\ 
    Now, it follows straightforwardly from Proposition \ref{prop:useful} (and in particular from Remark \ref{remarkvitale}) that $t\mapsto \eta(\bar{m}(\cdot,t))$ is uniformly Lipschitz with respect to $(m_0,t_0,\alpha)$; we denote its Lipschitz constant by $L_\eta$.\\
    Set $\bar{t}:=\delta/(2L_\eta)$. Then, for every $t\in (t_0+t_e,t_0+\bar{t})$, we have
    \begin{equation}\label{eq:sopra2}
        \eta(\bar{m}(\cdot,t)) \leq \eta(\bar{m}(\cdot,t_0+t_e)) + L_\eta(t-t_0-t_e) \leq  \frac{\delta}{2}
        \Rightarrow \int_{\R^n} p(x)\bar{m}(x,t)\;dx \geq \frac{\delta}{2}.
    \end{equation}
    Therefore, for every $t\in (t_0+t_e,t_0+\bar{t})$, we get
    $ \displaystyle \int_{\tilde{\Omega}(T)\setminus \Omega} \bar{m}(x,t)\;dx \geq \frac{\delta}{2\|p\|_{L^\infty(\tilde{\Omega}(T))}}$.\\ 
    Therefore, requiring (see below) that $t^*\leq \bar{t}$, we have
    \begin{equation}\label{eq:ferrara}
        \frac{d}{dt}\eta(\bar{m}(\cdot,t)) \geq \frac{\delta\bar{M}r_0}{2\|p\|_{L^\infty(\tilde{\Omega}(T))}} =:C_1 > 0\qquad \text{for a.e. }t\in (t_0+t_e,t_0+t^*),
    \end{equation}
    which, together with $\eta(\bar{m}(\cdot,t_0+t_e)) = 0$, guarantees that
    $\eta(\bar{m}(\cdot,t)) \geq 0$ for every $t\in [t_0,t_0+t^*]$ also in this case.\\
    We now focus on the case $t_e+k\eps < t^*$. Clearly, $\eta(\bar{m}(\cdot,t))\geq 0$ for all $t\in [t_0,t_0+t_e+k\eps]$. Let $t\in(t_0+t_e+k\eps,t_0+t^*]$. Then, for every $s\in (t_0+t_e+k\eps,t]$, by adding and subtracting to the first line of \eqref{eq:sopra} the term
    $\displaystyle \int_{\R^n} p(x)\diver(\bar{\alpha}(x,s)m(x,s-k\eps))\,dx$ and using H\"{o}lder's inequality together with the uniform bounds on $\mathcal{A}$ we obtain, for a suitable $M'>0$,
    \begin{align}
        \nonumber \frac{d}{ds}\eta(\bar{m}(\cdot,s)) \geq & -M'\|p\|_{L^\infty(\tilde{\Omega}(T))}\|\bar{m}(\cdot,s)-m(\cdot,s-k\eps)\|_{H^1(\R^n)}\\  
        \label{eq:troncato}& + \int_{\R^n} p(x)\diver(\alpha(x,s-k\eps)m(x,s-k\eps))\;dx,
    \end{align}
    where in the last line we used $\bar{\alpha}(\cdot,s)=\alpha(\cdot,s-k\eps)$ for $s\in (t_0+t_e+k\eps,t]$. Furthermore we observe that
    $u(\cdot,s):=\bar{m}(\cdot,s)-m(\cdot,s-k\eps)$ solves the continuity equation. Hence, using first Proposition \ref{prop:energy-ex} and then Remark \ref{remarkvitale} (and noting that $\bar{m}(\cdot,t_0+t_e)=m(\cdot,t_0+t_e)$), we get
    \begin{align}
        \label{primopez} \|\bar{m}(\cdot,s)-m(\cdot,s-k\eps)\|_{H^1(\R^n)} \leq e^{C(s-t_0-t_e-k\eps)}L_Tk\eps \qquad \forall s\in (t_0+t_e+k\eps,t].
    \end{align}
    Moreover, recalling that $\displaystyle\int_{\R^n} p(x)m(x,t_0+t_e)\;dx=\delta$ and the definition of $\eps $ in \eqref{da dim},
    \begin{align*}
        \int_{t_0+t_e+k\eps}^t \int_{\R^n} p(x)&\diver(\alpha(x,s-k\eps)m(x,s-k\eps))\;dx\;ds  = - \int_{\R^n}\int_{t_0+t_e+k\eps}^t p(x)\partial_t m(x,s-k\eps)\;ds\;dx\\ 
        & = -\int_{\R^n} p(x)\left[ m(x,t-k\eps)-m(x,t_0+t_e) \right]\;dx = \eta(m(\cdot,t-k\eps))\\ 
        & \geq \inf_{t_0+t_e\leq s\leq t_0+t^*-k\eps} \left[ \eta(m(\cdot,s))\right] \geq -\sup_{t_0\leq s\leq t_0+t^*}  \left[ -\eta(m(\cdot,s))\right] = -\eps.
    \end{align*} 
    Using this together with \eqref{eq:ferrara}, \eqref{eq:troncato}, and \eqref{primopez}, it follows ($C_2:=M'\|p\|_{L^\infty(\tilde{\Omega}(T))}L_T/C$) that
    \begin{align*}
        \eta(\bar{m}(\cdot,t)) & = \eta(\bar{m}(\cdot,t_0+t_e) + \int_{t_0+t_e}^{t_0+t_e+k\eps} \frac{d}{ds}\eta(\bar{m}(\cdot,s))\;ds + \int_{t_0+t_e+k\eps}^{t} \frac{d}{ds}\eta(\bar{m}(\cdot,s))\;ds \\ 
        & \geq 0+C_1k\eps -C_2k\eps  \int_{t_0+t_e+k\eps}^t Ce^{C(s-t_0-t_e-k\eps)}\;ds -\eps  \geq C_1k\eps - C_2k\eps \left[ e^{Ct^*}-1 \right]-\eps.
    \end{align*}
    Therefore, taking
    \[  
         t^*:=\min \left\{ \frac{\ln \left( \frac{C_1}{2C_2}+1 \right)}{C}, \bar{t} \right\}\qquad k:=\frac{2}{C_1} ,
    \]
    a direct computation shows that for every $t\in(t_0+t_e+k\eps,t_0+t^*]$ we have
    $\eta(\bar{m}(\cdot,t)) \geq 0$, so this last case is also settled.\\
    Therefore, the claim is proved. By iterating this construction for successive time intervals of length $t^*$ (up to the final time horizon $T$), we obtain $\bar\alpha\in \mathcal{A}(m_0,t_0)$. \\ 
    Let $h \in (0,t^*]$. If $h \in (0,t_e]$, then \eqref{da dim} is immediate; if instead $h \in (t_e, t_e + k\eps)$, then
    \begin{align*}
        |J_{h}(m_0,\bar\alpha)-J_{h}(m_0,\alpha)| & \leq \int_{t_0+t_e}^{t_0+h}\big|\ell (\bar{m}(\cdot,t),\bar\alpha(\cdot,t)) - \ell ({m}(\cdot,t),\alpha(\cdot,t))\big|\;dt \leq 2N_\ell k\eps.
    \end{align*}
    If instead $h \in [t_e+k\eps,t^*]$, then (hereafter, $m(t)=m(\cdot,t)$ and $\alpha(\cdot,t)=\alpha(t)$)
        \begin{align*}
        \nonumber \left|J_{h}(m_0,\bar\alpha)-J_{h}(m_0,\alpha)\right| &  = \left|\int_{t_0+t_e}^{t_0+t_e+k\eps} \ell(\bar m(t),\bar\alpha(t))\;dt + \int_{t_0+t_e+k\eps}^{t_0+h} \ell(\bar m(t),\bar\alpha(t))\;dt \right.\\
        \nonumber & \pm \int_{t_0+t_e+k\eps}^{t_0+h} \ell( m(t-k\eps),\underbrace{\alpha(t-k\eps)}_{=\bar\alpha(t)})\;dt-\int_{t_0+t_e}^{t_0+h} \ell( m(t),\alpha(t))\;dt\Bigg|\\
        & \leq \int_{t_0+t_e}^{t_0+t_e+k\eps} |\ell(\bar m(t),\bar\alpha(t))|\;dt + L_\ell\int_{t_0+t_e+k\eps}^{t_0+h} \|\bar{m}(t)-m(t-k\eps)\|_{H^1(\R^n)}\;dt\\ 
        & + \left|\int_{t_0+t_e}^{t_0+h-k\eps} \ell(m(t),\alpha(t))\;dt - \int_{t_0+t_e}^{t_0+h} \ell(m(t),\alpha(t))\;dt\right|\\ 
        & \leq 2N_\ell k\eps + L_\ell L_T k\eps \int_{t_0+t_e+k\eps}^{t_0+h} e^{C(t-t_0-t_e-k\eps)}\;dt\leq C^* \eps,  
    \end{align*}
    for a suitable uniform constant $C^*>0$, where \eqref{primopez} was used in the last line.
    \end{proof}

\begin{remark}\label{rem:generalization domain and weight}
    Lemma \ref{lemma:remake soner} remains valid for domains $\Omega$ that are not necessarily convex (assuming at least the validity of the divergence theorem) and for weight functions $p\in W^{1,\infty}_{\text{loc}}(\R^n)$ such that $p|_{\bar\Omega}\equiv 0$, provided there exists $a_p\in \tilde{\mathcal{A}}$ with the \textit{pushback property}, i.e., \eqref{eq:pushback inequality}, with a uniform strictly negative constant.
\end{remark}

\begin{theorem}\label{th:lipcontinuity-of-V}
    The value function $V$ is Lipschitz continuous on $\bar{X}_c$: there exists $L_V>0$ such that
    \[
        |V(m_1,t_1)-V(m_2,t_2)|
        \leq L_V\!\left(\|m_1-m_2\|_{L^2(\R^n)}+|t_1-t_2|\right)
        \qquad \forall (m_1,t_1),(m_2,t_2)\in \bar{X}_c.
    \]
\end{theorem}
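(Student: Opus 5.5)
We split the estimate into a spatial part (same time, different densities) and a temporal part (same density, different times). The plan follows Soner's scheme, with Lemma \ref{lemma:remake soner} supplying the correction of non-admissible trajectories.

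\emph{Step 1: spatial Lipschitz estimate at fixed time.} Fix $t\in[0,T]$ and $m_1,m_2\in\bar X_c(t)$. By symmetry it suffices to bound $V(m_2,t)-V(m_1,t)$ from above. Take $\alpha\in\mathcal{A}(m_1,t)$ that is $\rho$-optimal for $V(m_1,t)$. Used from $m_2$, this control may violate the constraint, but only by a small amount. Indeed, since $\eta(m(\cdot,s;\alpha,m_1,t))\ge0$, Proposition \ref{prop:useful} ii) gives
\[
-\eta(m(\cdot,s;\alpha,m_2,t))\le \|p\|_{L^2(\tilde\Omega(T))}\,\tilde L\,\|m_1-m_2\|_{L^2}
\]
for every $s$, with $\tilde L$ uniform on $X$ by Remark \ref{remarkvitale}. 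Hence the quantity $\eps$ in \eqref{da dim}, computed for $(m_2,t,\alpha)$ on every window of length $t^*$, is at most $c\|m_1-m_2\|_{L^2}$. Apply Lemma \ref{lemma:remake soner} successively on the intervals $[t+jt^*,t+(j+1)t^*]$; there are at most $\lceil T/t^*\rceil$ of them. This produces $\bar\alpha\in\mathcal{A}(m_2,t)$ whose running cost differs from that of $\alpha$ by at most $C^*\eps$ on each window.

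The difficulty is that the lemma only compares the costs of $\bar\alpha$ and $\alpha$ from the same initial datum $m_2$ on the first window, while on later windows the corrected trajectory is shifted by $k\eps$ in time. I would control the state discrepancy through \eqref{primopez}, namely
\[
\|\bar m(\cdot,s)-m(\cdot,s-k\eps)\|_{H^1}\le C\eps,
\]
and then restart the argument on the next window from $\bar m$, comparing it with the time-shifted $\alpha$-trajectory. Since the number of windows is finite, the constants compound only finitely many times. The final cost is handled with Assumption \ref{ass:ellandpsi} ii), the bound $\|m(T)-m(T-j k\eps)\|\le L_T jk\eps$, and the lost tail of length $\le jk\eps$ of running cost, bounded by $N_\ell$. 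Finally, $J(m_2,t,\alpha)-J(m_1,t,\alpha)\le (L_\ell T+L_\psi)\tilde L\|m_1-m_2\|_{L^2}$ by Assumption \ref{ass:ellandpsi} and Proposition \ref{prop:useful} ii). Combining these and letting $\rho\to0$ gives $V(m_2,t)-V(m_1,t)\le L\|m_1-m_2\|_{L^2}$.

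\emph{Step 2: temporal estimate.} Fix $(m,t_1),(m,t_2)\in\bar X_c$ with $t_1<t_2$. Observe that $\bar X_c(t_1)\subseteq\bar X_c(t_2)$ by the definition of $X(t)$.

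For the upper bound on $V(m,t_1)$, use the DPP (Theorem \ref{th:dpp}) with the null control on $[t_1,t_2]$; it is admissible and keeps $m$ fixed. This gives $V(m,t_1)\le N_\ell(t_2-t_1)+V(m,t_2)$.

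For the reverse inequality, take $\alpha\in\mathcal{A}(m,t_1)$ nearly optimal and compare with the time-shifted control $\alpha(\cdot,\cdot-(t_2-t_1))$ started at $(m,t_2)$. The shifted trajectory coincides with the original one, translated in time, so the constraint is preserved. Its cost misses only the last piece of length $t_2-t_1$ of the original running cost and evaluates $\psi$ at $m(T-(t_2-t_1))$ instead of $m(T)$. By Remark \ref{remarkvitale} and the boundedness of $\ell$, this gives $V(m,t_2)\le V(m,t_1)+(N_\ell+L_\psi L_T)(t_2-t_1)$.

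The general case follows from the triangle inequality via the intermediate point $(m_1,t_2)$. Since $t_1<t_2$ implies $m_1\in\bar X_c(t_2)$, this point lies in $\bar X_c$; the case $t_2<t_1$ uses $(m_2,t_1)$ instead.

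The main obstacle is Step 1: turning the local, one-window correction of Lemma \ref{lemma:remake soner} into a global admissible control while keeping the cost bookkeeping linear in $\|m_1-m_2\|_{L^2}$. In particular, one must ensure that the time shifts accumulated across windows, and the fact that later windows start from the perturbed state $\bar m$, still produce violations of order $\|m_1-m_2\|_{L^2}$. This uses Proposition \ref{prop:energy-ex} and again Proposition \ref{prop:useful} ii).
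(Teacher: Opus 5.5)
Your proposal is correct in outline, but it takes a genuinely different route from the paper in both steps.

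\textbf{Step 1.} The paper never builds a corrected control over the whole horizon. It defines the fixed-time modulus $\omega_t(r)$ and takes $\alpha$ $\beta$-optimal in the DPP only on $[t,t+h]$ with $h\le t^*$. It then applies Lemma~\ref{lemma:remake soner} exactly once, from the same initial datum on a single window, and gets the recursion $\omega_t(r)\le C_2r+\omega_{t+h}(\tilde C r)$. Iterating backwards from $\omega_T(r)\le L_\psi r$ over the finitely many nodes $T-jt^*$ gives the linear bound. The propagation of the perturbation to later windows, which you single out as the main obstacle, is absorbed into the modulus at time $t+h$ evaluated at the enlarged radius $\tilde C r$.

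Your forward construction is what you get by unrolling this induction at the level of controls. It avoids the DPP in Step 1, but you must re-verify the Lemma's input on every window. To close that step, compare two trajectories on window $j$:
\begin{itemize}
\item the uncorrected one, with shifted control $\alpha(\cdot-K_j)$ started from $\bar m(t+jt^*)$;
\item the admissible one, $m(\cdot-K_j;\alpha,m_1,t)$.
\end{itemize}
Both solve the continuity equation with the same field from the same initial time $t+jt^*$. The $L^2$ estimate with constant $e^{nMT/2}$ from the proof of Proposition~\ref{prop:useful}, together with \eqref{primopez}, then gives a recursion $D_{j+1}\le\Lambda D_j$ for the discrepancy at window starts. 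Hence $\eps_j\le c\,\Lambda^j\|m_1-m_2\|_{L^2}$.

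Two bookkeeping points need care:
\begin{itemize}
\item You must handle the case where the pushback phase is cut at the end of a window, so the realized shift is $t^*-t_e<k\eps_j$.
\item The running-cost terms that need the $N_\ell$ bound are the overlaps of length $K_{j+1}-K_j$ at window junctions. The lost tail $[T-K_J,T]$ has the favourable sign, since $\ell\ge0$.
\end{itemize}

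\textbf{Step 2.} Your time estimate is also different and somewhat cleaner. The null control and the time-shift argument, using $\bar X_c(t_1)\subseteq\bar X_c(t_2)$, give both time inequalities without the spatial estimate. The paper instead derives both from \eqref{eq:lipcont-4} and Remark~\ref{remarkvitale}.

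\textbf{Trade-off.} The paper's route is more modular: it uses the Lemma exactly as stated and lets the DPP handle concatenation. Yours gives an explicit near-optimal admissible control for $m_2$, at the cost of heavier bookkeeping. In both routes the Lipschitz constant grows like $\Lambda^{T/t^*}$.
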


\begin{proof}
    For each $t\in [0,T]$ we define, for $r>0$,
    \[
        \omega_t(r):=\sup \left\{ |V(m_1,t)-V(m_2,t)|:\ (m_j,t)\in \bar{X}_c \ \text{for } j=1,2 \ \text{and}\ \|m_1-m_2\|_{L^2(\R^n)} \leq r \right\}.
    \]
     Let $t^*$ be as in Lemma \ref{lemma:remake soner}. Fix $r>0$ and consider $(m_1,t),(m_2,t)\in \bar{X}_c$ such that $\|m_1-m_2\|_{L^2}\leq r$, $\beta>0$, $h\in (0,\min\{T-t,t^*\}]$, and let $\alpha\in \mathcal{A}(m_2,t)$ be $\beta$-optimal for $(m_2,t)$ in the DPP of Theorem \ref{th:dpp}, i.e.,
     \begin{equation}\label{eq:lipcont-1}
         V(m_2,t) \geq J_h(m_2,t,\alpha) + V(m(\cdot,t+h;\alpha,m_2,t),t+h)-\beta.
     \end{equation}
    Let $\bar{\alpha}\in \mathcal{A}(m_1,t)$ be associated with $\alpha$ as in Lemma \ref{lemma:remake soner}. Note that for every $s\in [t,t+t^*]$, by adding and subtracting $\langle p,m(\cdot,s;\alpha,m_2,t) \rangle$, since $-\eta(m(\cdot,s;\alpha,m_2,t))\leq 0$ and recalling Remark \ref{remarkvitale},
    \begin{align*}
        \nonumber -\eta(m(\cdot,s;\alpha,m_1,t)) & = -\eta(m(\cdot,s;\alpha,m_2,t)) + \int_{\R^n} p(x)\big[ m(x,s;\alpha,m_1,t)-m(x,s;\alpha,m_2,t) \big]\;dx\\
        \nonumber & \leq \|p\|_{L^2(\tilde{\Omega}(T))}\tilde{L}\|m_1-m_2\|_{L^2(\R^n)}\leq \|p\|_{L^2(\tilde{\Omega}(T))}\tilde{L}r .
    \end{align*}  
    Therefore, it follows from \eqref{da dim} (in particular from the definition of $\eps$) that
    $\displaystyle\left| J_h(m_1,t,\alpha)-J_h(m_1,t,\bar\alpha)  \right|\leq C^*\|p\|_{L^2(\tilde{\Omega}(T))}\tilde{L}r=:C_0r$.\\ 
    Moreover, Assumption \ref{ass:ellandpsi} implies that
    \[  
         \left| J_h(m_1,t,\alpha)-J_h(m_2,t,\alpha)  \right| \leq L_\ell \tilde{L}\int_t^{t+h}\|m_1-m_2\|_{L^2(\R^n)}\;ds \leq C_1 r,
    \]
    with $C_1:=L_\ell \tilde{L}T$; therefore
    \begin{equation*}
        \left| J_h(m_1,t,\bar{\alpha})-J_h(m_2,t,\alpha)\right| \leq (C_0+C_1)r=:C_2r.
    \end{equation*}
    Hence, using this and \eqref{eq:lipcont-1}, we obtain
    \begin{align}
        \label{eq:lipcont-2} V(m_1,t)-V(m_2,t)
         & \leq  J_h(m_1,t,\bar\alpha)+V(m(\cdot,t+h;\bar\alpha,m_1,t),t+h)   -V(m_2,t)\\
        \nonumber & \leq  C_2r +  V(m(\cdot,t+h;\bar\alpha,m_1,t),t+h)- V(m(\cdot,t+h;\alpha,m_2,t),t+h)  + \beta.
    \end{align}
    By construction $\bar\alpha$ is as $\alpha$ up to the first possible exit time for $(m_1,t)$, actually $t+t_e$ (see the proof of Lemma \ref{lemma:remake soner}), then it substantially differs from $\alpha$ only on a time interval of length $k\eps$, being afterwards only a time-shift of $\alpha$. Then using uniform bounds (see Remark \ref{remarkvitale}) and \eqref{primopez}, we can find $C_3>0$, independent of $(m_1,t)$ and $\alpha$, such that
    \[
        \|m(\cdot,s;\bar\alpha,m_1,t)-m(\cdot,s;\alpha,m_1,t)\|_{L^2(\R^n)}
        \leq C_3\eps \qquad \forall s\in [t,t+t^*],
    \]
    and since $\displaystyle \eps\leq \|p\|_{L^2(\tilde{\Omega}(T))}\tilde{L}\,r$, we get $\displaystyle  \|m(\cdot,s;\bar\alpha,m_1,t)-m(\cdot,s;\alpha,m_1,t)\|_{L^2(\R^n)}\leq C_4r$ (for some $C_4>0$) for all $s\in [t,t+t^*]$. In particular for $s=t+h$, by adding and subtracting $m(\cdot,t+h;\alpha,m_1,t)$ here below, we get
     \begin{align*}
        \|m(\cdot,t+h;\bar\alpha,m_1,t)-m(\cdot,t+h;\alpha,m_2,t)\|_{L^2}
         \leq C_4 r + \tilde{L} r =: \tilde{C}r.
    \end{align*}
    Then, \eqref{eq:lipcont-2}, by the definition of $\omega_{t+h}$, implies that $ V(m_1,t)-V(m_2,t) \leq C_2r+ \omega_{t+h}(\tilde{C}r)+\beta.$\\
    By arbitrariness of $\beta$, exchanging the roles of $m_1$ and $m_2$ and taking the supremum, we obtain
    \begin{equation}\label{eq:lipcont-3}
        \omega_t(r)\leq C_2 r + \omega_{t+h}(\tilde{C}r)
        \qquad  \forall t\in [0,T],\ h \in [0,\min\{ t^*,T-t\}],\ r>0.
    \end{equation}
    Set $t_j:=T-jt^*$, $\omega^j:=\omega_{t_j}$ and $\bar{j}:=\max\{j:\,t_j\ge0\}$. From \eqref{eq:lipcont-3} we have $\omega^{j+1}(r)\leq C_2r+\omega^j(\tilde C r)$ for $j=0,\dots,\bar{j}-1$; since, by Assumption \ref{ass:ellandpsi}, $\omega^0(r)=\omega_T(r)\le L_\psi r$, iteration yields $L_*>0$ such that $\omega^j(r)\le L_*r$ for all $j=0,\dots,\bar j$. \\ 
    If $t\in[0,T]\setminus\{t_0,\dots,t_{\bar j}\}$ and $\tilde j:=\max\{j:\,t_j>t\}$, choosing $h:=t_{\tilde j}-t\in(0,t^*)$ in \eqref{eq:lipcont-3} yields $\omega_t(r)\le C_2r+\omega_{\tilde j}(\tilde C r)\le (C_2+L_*\tilde C)r$.\\ 
    Therefore there exists $L_{\mathrm{mass}}>0$ such that $\omega_t(r)\le L_{\mathrm{mass}}r$ for all $t\in[0,T],\ r>0,$ i.e.,
    \begin{equation}\label{eq:lipcont-4}
        |V(m_1,t)-V(m_2,t)|
        \le L_{\mathrm{mass}}\|m_1-m_2\|_{L^2}
        \qquad \forall (m_1,t),(m_2,t)\in\bar{X}_c.
    \end{equation}
    
    Now fix $(m_0,t)\in\bar{X}_c$ and $h\in[0,T-t]$, and hence it is $(m_0,t+h)\in\bar{X}_c$. Applying the DPP (Theorem \ref{th:dpp}) for $(m_0,t)$ with $\tau = t + h$, adding and subtracting $V(m_0,t+h)$ and using Assumption \ref{ass:ellandpsi} and \eqref{eq:lipcont-4}, we obtain for every $\alpha\in\mathcal{A}(m_0,t)$
    \[
        V(m_0,t)\le N_\ell h + L_{\mathrm{mass}}\|m(\cdot,t+h;\alpha,m_0,t)-m_0\|_{L^2}+V(m_0,t+h).
    \]
    Hence, by Remark \ref{remarkvitale}, $V(m_0,t)-V(m_0,t+h)\le (N_\ell+L_{\mathrm{mass}}L_T)h=:L_{\mathrm{time}}h.$\\ 
    On the other hand, let $\beta>0$ and let $\alpha\in\mathcal{A}(m_0,t)$ be $\beta$-optimal for $(m_0,t)$ in the DPP with $\tau = t + h$; then
    \begin{align*}
         V(m_0,t)& \ge \int_t^{t+h}\ell(m(\cdot,s;\alpha,m_0,t),\alpha(\cdot,s))\,ds + V(m(\cdot,t+h;\alpha,m_0,t),t+h)-\beta\\ 
        & \ge V(m(\cdot,t+h;\alpha,m_0,t),t+h)-\beta,
    \end{align*}
    Therefore, by \eqref{eq:lipcont-4} and Remark \ref{remarkvitale},
    \[
        V(m_0,t+h)-V(m_0,t)
        \le |V(m_0,t+h)-V(m(\cdot,t+h;\alpha,m_0,t),t+h)|+\beta
        \le L_{\mathrm{mass}}L_T h+\beta.
    \]
    By the arbitrariness of $\beta$ (and combining with the inequality above), we obtain
    \begin{equation}\label{eq:lipcont-time-2}
        |V(m_0,t)-V(m_0,t+h)|\le L_{\mathrm{time}}h
        \qquad \forall (m_0,t)\in\bar{X}_c,\ \forall h\in[0,T-t].
    \end{equation}
    Finally, for arbitrary $(m_1,t_1),(m_2,t_2)\in\bar{X}_c$ (say $t_1\le t_2$), adding and subtracting $V(m_1,t_2)$ (recalling that $(m_1,t_2)\in\bar{X}_c$), the claim easily follows from \eqref{eq:lipcont-4} and \eqref{eq:lipcont-time-2}.
\end{proof}

\section{The constrained Hamilton--Jacobi--Bellman equation}\label{sec:hjb}
\noindent In this section, following the finite-dimensional Soner’s state-constraint approach \cite{Soner1986StateConstraintI}, we introduce the constrained Hamilton-Jacobi-Bellman equation (HJB), coupled with the terminal condition (TC) induced by the terminal cost $\psi$, and the notion of constrained viscosity solution. We show that the value function $V$ is a constrained viscosity solution of (HJB)+(TC).

\begin{remark}\label{rem:frechet}
    As noted in \cite[Remark 2.3]{bagagiolo2024single}, under our standing assumptions, for every $\varphi\in C^1\big(L^2(\R^n)\times[0,T]\big)$ and every solution $m$ of \eqref{eq:continuity}, the map $t\mapsto\varphi(m(\cdot,t),t)$ is absolutely continuous on $[0,T]$. Moreover, for a.e. $t\in(0,T)$,
    \begin{align*}
        \frac{d}{dt}\varphi(m(\cdot,t),t)
        & =\left\langle \nabla_m\varphi(m(\cdot,t),t),\partial_tm(\cdot,t)\right\rangle+\partial_t\varphi(m(\cdot,t),t)\\
        & =-\left\langle \nabla_m\varphi(m(\cdot,t),t),\diver\big(\alpha(\cdot,t)m(\cdot,t)\big)\right\rangle
          +\partial_t\varphi(m(\cdot,t),t),
    \end{align*}
    where $\nabla_m$ denotes the ($L^2$-)Fr\'echet gradient with respect to the state variable $m$.
\end{remark}

\noindent We define the Hamiltonian of our problem as 
    \begin{equation*}
        H:H^1(\R^n)\times L^2(\R^n) \to \R,\qquad (m,q)\mapsto H(m,q):= \sup_{a\in\tilde{\mathcal{A}}} \bigg[  \langle q,\diver(am)\rangle - \ell(m,a) \bigg].
    \end{equation*}
\begin{lemma}\label{lemma:semilip hamilton}
   For every $m_1,m_2\in X(T)$ and $q_1,q_2\in L^2(\R^n)$ we have
    \begin{align*}
         |H(m_1,q_1)-H(m_2,q_2)|
         \leq M\tilde{C}_U\|q_1-q_2\|_{L^2}+M\|q_1\|_{L^2}\|m_1-m_2\|_{H^1}+ L_\ell\|m_1-m_2\|_{L^2}.
    \end{align*}
\end{lemma}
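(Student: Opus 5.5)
The plan is the standard ``difference of suprema'' argument. For fixed $(m,q)$, set $F_a(m,q):=\langle q,\diver(am)\rangle-\ell(m,a)$, so that $H=\sup_{a\in\tilde{\mathcal{A}}}F_a$. The supremum is finite: $\ell\ge0$ and $|\langle q,\diver(am)\rangle|\le \|q\|_{L^2}\|\diver(am)\|_{L^2}$, which is bounded uniformly in $a\in\tilde{\mathcal{A}}$ for $m\in H^1$. We then have the elementary inequality
\[
|H(m_1,q_1)-H(m_2,q_2)|\le \sup_{a\in\tilde{\mathcal{A}}}|F_a(m_1,q_1)-F_a(m_2,q_2)|,
\]
so it suffices to bound $|F_a(m_1,q_1)-F_a(m_2,q_2)|$ uniformly in $a$.

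For fixed $a\in\tilde{\mathcal{A}}$, we add and subtract $\langle q_1,\diver(am_2)\rangle$ and split the difference into three terms:
\[
\langle q_1-q_2,\diver(am_2)\rangle+\langle q_1,\diver(a(m_1-m_2))\rangle-\big(\ell(m_1,a)-\ell(m_2,a)\big).
\]
The third term is bounded by $L_\ell\|m_1-m_2\|_{L^2}$ using Assumption \ref{ass:ellandpsi} i), since $m_1,m_2\in X(T)$. For the second term, we expand $\diver(aw)=a\cdot\nabla w+w\,\diver a$ and use Cauchy--Schwarz together with $\|a\|_{W^{1,\infty}}\le M$. This gives $\|\diver(aw)\|_{L^2}\le M\|w\|_{H^1}$, up to the convention for the $H^1$ norm and the dimensional constant coming from $|a|$ and $|\diver a|$, which one absorbs by reading $M$ as bounding these pointwise quantities. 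Hence the second term is at most $M\|q_1\|_{L^2}\|m_1-m_2\|_{H^1}$.

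The first term is where the constant $\tilde C_U$ enters. By Cauchy--Schwarz it is at most $\|q_1-q_2\|_{L^2}\|\diver(am_2)\|_{L^2}\le M\|m_2\|_{H^1}\|q_1-q_2\|_{L^2}$. We then use $m_2\in X(T)$. Since $\|m_2\|_{W^{2,\infty}}\le\tilde C_U$ and $\supp m_2\subseteq\tilde\Omega(T)$, its $H^1$ norm is controlled by $\tilde C_U$ times $|\tilde\Omega(T)|^{1/2}$. This yields $M\tilde C_U\|q_1-q_2\|_{L^2}$, up to a multiplicative constant that is absorbed into $\tilde C_U$ (equivalently, $\tilde C_U$ is understood as a bound for the $H^1$ norm on $X(T)$, in the same spirit as $X(0)$ bounds both norms). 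Taking the supremum over $a$ gives the claim.

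The main obstacle is purely bookkeeping of constants. The computation itself is routine, but to match the exact constants $M\tilde C_U$ and $M$ in the statement one must pin down the norm conventions: the dimensional factor in $\|\diver(aw)\|_{L^2}$, and the passage from the $W^{2,\infty}$ bound plus the support bound to an $H^1$ bound for elements of $X(T)$. We would handle this by stating the equivalence of norms once and absorbing the harmless constants into $M$ and $\tilde C_U$.
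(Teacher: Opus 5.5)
Your proposal is correct and follows the paper's proof. The paper likewise bounds the difference of suprema by a supremum of differences and passes through the intermediate point $(m_2,q_1)$; it applies the triangle inequality to $H$ itself rather than to $F_a$ for fixed $a$, which is equivalent. It then uses Cauchy--Schwarz, $\|\diver(aw)\|_{L^2}\le M\|w\|_{H^1}$, $\|\diver(am_2)\|_{L^2}\le M\tilde C_U$ and the Lipschitz bound on $\ell$. The constant bookkeeping you flag (dimensional factors, and the $|\tilde\Omega(T)|^{1/2}$ factor needed to turn the $W^{2,\infty}$ bound on $X(T)$ into an $H^1$ bound) is absorbed silently in the paper as well. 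This is harmless, since the lemma is only used in the comparison proof, where the exact constants play no role.
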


\begin{proof}
    By a direct computation,
    \begin{align*}
        |H(m_1,q_1)-H(m_2,q_1)| & \leq \left|\sup_{a\in\tilde{\mathcal{A}}} \big[ \langle q_1,\diver(am_1)-\diver(am_2)\rangle+\ell(m_2,a)-\ell(m_1,a) \big]\right|\\
        & \leq \|q_1\|_{L^2}M\|m_1-m_2\|_{H^1}+L_\ell \|m_1-m_2\|_{L^2}.
    \end{align*}
    Moreover, $\displaystyle |H(m_2,q_1)-H(m_2,q_2)| \leq \left|  \sup_{a\in\tilde{\mathcal{A}}} \big[ \langle q_1-q_2,\diver(am_2)\rangle\big] \right|\leq M\tilde{C}_U \|q_1-q_2\|_{L^2}$.
\end{proof}

\noindent The Hamilton--Jacobi--Bellman equation, coupled with its terminal condition, is given by
    \begin{equation}\label{eq:HJB}
        \left\{
        \begin{alignedat}{2}
            -\partial_tu(m,t) + H(m,\nabla_m u(m,t)) &= 0,
            &\qquad& \text{(HJB)}\\
            u(m,T) &= \psi(m).
            &\qquad& \text{(TC)}
        \end{alignedat}
        \right.
    \end{equation}
    In the spirit of Soner \cite{Soner1986StateConstraintI}, we define constrained viscosity solutions of \eqref{eq:HJB} by requiring the subsolution condition in the interior and the supersolution condition up to the boundary; more precisely, we use the following definition.
\begin{definition}\label{def:solvisc}
   Let $u\in C(\bar{X}_c)$ (continuous with respect to $H^1$ as well as to $L^2$, see Remark \ref{rem:mod-cont}).
    \begin{itemize}
        \item[i)] $u$ is called a \emph{viscosity subsolution} of \eqref{eq:HJB} in $X_c$ if:
        \begin{itemize}
            \item[i-a)]\label{ia} for every $\varphi\in C^1(L^2(\R^n)\times[0,T])$ and every $\displaystyle (m_0,t_0)\in \bigcup_{t\in(0,T)} \left[ X_c(t)\times \{t\} \right]$ which is a local maximum point of $u-\varphi$ with respect to $X_c$, we have 
        \[ -\partial_t \varphi(m_0,t_0) + H(m_0,\nabla_m \varphi(m_0,t_0)) \leq 0; \]
            \item[i-b)]\label{ib} $u(m,T)\leq \psi(m)$ for all $m\in \bar{X}_c(T)$.
        \end{itemize}
        \item[ii)] $u$ is called a \emph{viscosity supersolution} of \eqref{eq:HJB} in $\bar{X}_c$ if:
        \begin{itemize}
            \item[ii-a)]\label{iia} for every $\varphi\in C^1(L^2(\R^n)\times[0,T])$ and every $\displaystyle (m_0,t_0)\in \bigcup_{t\in(0,T)} \left[ \bar{X}_c(t)\times \{t\} \right]$ which is a local minimum point of $u-\varphi$ with respect to $\bar{X}_c$, we have 
        \[ -\partial_t \varphi(m_0,t_0) + H(m_0,\nabla_m \varphi(m_0,t_0)) \geq 0; \]
            \item[ii-b)]\label{iib} $u(m,T)\geq \psi(m)$ for all $m\in \bar{X}_c(T)$.
        \end{itemize}
        \item[iii)] $u$ is called a \emph{constrained viscosity solution} of \eqref{eq:HJB} in $\bar{X}_c$ if it is both a viscosity subsolution in $X_c$ and a viscosity supersolution in $\bar{X}_c$.
    \end{itemize}
\end{definition}

\begin{remark}\label{rem:short-time}
    For every
    $(m_0,t_0)\in \bigcup_{t\in [0,T)} \left[ X_c(t)\times \{t\} \right]$, there exists $h=h(m_0,t_0)>0$ such that, for every $\alpha\in\mathcal{A}$, one has that $m(\cdot,t;\alpha,m_0,t_0)\in X_c(t)$ for all $ t\in[t_0,t_0+h]$. This follows immediately from the uniform (w.r.t. $(\alpha,m_0,t_0)$) Lipschitz continuity of $t\mapsto \langle p,m(\cdot,t;\alpha, m_0,t_0) \rangle$ (see Remark \ref{remarkvitale}).
\end{remark}

\begin{proposition}\label{prop:solvisc}
    The value function $V$ is a constrained viscosity solution of \eqref{eq:HJB} in $\bar{X}_c$.
\end{proposition}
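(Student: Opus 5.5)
The plan is to follow Soner's finite-dimensional argument, with the DPP (Theorem~\ref{th:dpp}), the chain rule of Remark~\ref{rem:frechet}, and the uniform time-regularity of Remark~\ref{remarkvitale} replacing the ODE tools. Continuity of $V$ on $\bar X_c$ comes from Theorem~\ref{th:lipcontinuity-of-V}; by Remark~\ref{rem:mod-cont} it holds in both the $L^2$ and $H^1$ topologies. The terminal conditions i-b) and ii-b) are immediate, because $V(m,T)=\psi(m)$ by definition.

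\emph{Subsolution in $X_c$.} Let $(m_0,t_0)$ with $t_0\in(0,T)$ and $m_0\in X_c(t_0)$ be a local maximum of $V-\varphi$ relative to $X_c$. Fix a constant control $\alpha\equiv a$ with $a\in\tilde{\mathcal A}$. By Remark~\ref{rem:short-time}, the trajectory $m(\cdot,s):=m(\cdot,s;a,m_0,t_0)$ stays in $X_c(s)$ for $s\in[t_0,t_0+h]$, uniformly in $a$. Hence $a\in\mathcal{A}(m_0,t_0)$ after extending it by $0$ beyond $t_0+h$; the zero control preserves the constraint. For small $h$ the point $(m(\cdot,t_0+h),t_0+h)$ lies in the neighbourhood where the maximum holds, by Remark~\ref{remarkvitale}. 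The DPP then gives
\[
\varphi(m_0,t_0)-\varphi(m(\cdot,t_0+h),t_0+h)\le V(m_0,t_0)-V(m(\cdot,t_0+h),t_0+h)\le \int_{t_0}^{t_0+h}\ell(m(\cdot,s),a)\,ds .
\]
Next, expand the left-hand side with Remark~\ref{rem:frechet}, divide by $h$ and let $h\to0$. Passing to the limit requires continuity of $s\mapsto\langle\nabla_m\varphi(m(s),s),\diver(a\,m(s))\rangle$. This holds because $m(s)\to m_0$ in $H^1$, so $\diver(a m(s))\to\diver(a m_0)$ in $L^2$, and $\nabla_m\varphi$ is continuous; the integrand $\ell$ is continuous as well. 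The result is $-\partial_t\varphi+\langle\nabla_m\varphi,\diver(a m_0)\rangle-\ell(m_0,a)\le0$, and taking the supremum over $a$ gives i-a).

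\emph{Supersolution on $\bar X_c$.} Now let $(m_0,t_0)$ be a local minimum of $V-\varphi$ relative to $\bar X_c$, possibly on the boundary, and suppose by contradiction that $-\partial_t\varphi(m_0,t_0)+H(m_0,\nabla_m\varphi(m_0,t_0))\le-\theta<0$. By Lemma~\ref{lemma:semilip hamilton} and continuity of $\partial_t\varphi$ and $\nabla_m\varphi$, this strict inequality persists with $-\theta/2$ on an $H^1\times\R$-neighbourhood of $(m_0,t_0)$ intersected with $\bar X_c$. By the definition of $H$, the pointwise bound holds uniformly in $a\in\tilde{\mathcal A}$:
\[
-\partial_t\varphi(m,t)+\langle\nabla_m\varphi(m,t),\diver(am)\rangle-\ell(m,a)\le-\theta/2 .
\]
By Remark~\ref{remarkvitale} there is $h>0$, independent of $\alpha$, such that every admissible trajectory from $(m_0,t_0)$ stays in this neighbourhood on $[t_0,t_0+h]$; admissibility keeps it inside $\bar X_c$. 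Pick $\alpha\in\mathcal{A}(m_0,t_0)$ that is $\theta h/4$-optimal in the DPP. Using the minimum property, the chain rule, and the bound above with $a=\alpha(\cdot,s)$ for a.e.\ $s$, we get
\begin{align*}
V(m_0,t_0)&\ge \int_{t_0}^{t_0+h}\ell\,ds+V(m(t_0+h),t_0+h)-\tfrac{\theta h}{4}\\
&\ge V(m_0,t_0)+\int_{t_0}^{t_0+h}\Bigl[\ell+\tfrac{d}{ds}\varphi(m(s),s)\Bigr]ds-\tfrac{\theta h}{4}\\
&\ge V(m_0,t_0)+\tfrac{\theta h}{2}-\tfrac{\theta h}{4}.
\end{align*}
This is a contradiction.

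The main obstacle is the supersolution step. The test function is only $C^1$ in $L^2$, while the Hamiltonian involves $\diver(am)$, so the uniformity of the neighbourhood must be checked in $H^1$; this is exactly where Lemma~\ref{lemma:semilip hamilton} and the compactness and equivalence of topologies on $X(T)$ (Remark~\ref{rem:mod-cont}) are needed. It must also be confirmed that measurable, non-constant controls are covered, since the bound is applied pointwise in time. Notably, no pushback construction is required here, because admissible controls never leave $\bar X_c$. This is why the supersolution property holds up to the boundary, while the subsolution property needs interior points in order to use arbitrary constant controls.
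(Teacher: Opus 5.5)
Your proof is correct. The terminal conditions and the subsolution step follow the paper's argument. You also make explicit two points the paper leaves tacit. First, the constant control $a$ must be concatenated with $\alpha\equiv0$ after $t_0+h$ so that it lies in $\mathcal{A}(m_0,t_0)$ and the DPP applies. Second, the limit $h\downarrow0$ uses the $H^1$-continuity of $s\mapsto m(s)$ to handle $\diver(a\,m(s))$.

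The supersolution step takes a different packaging. The paper argues directly: it takes an $h\eps$-optimal control $\bar\alpha$ and freezes everything at $(m_0,t_0)$ through the expansions $\int\ell(\bar m(s),\bar\alpha(s))\,ds=\int\ell(m_0,\bar\alpha(s))\,ds+o(h)$ and $\varphi(\bar m(t_0+h),t_0+h)-\varphi(m_0,t_0)=-\int\langle\nabla_m\varphi(m_0,t_0),\diver(\bar\alpha(s)m_0)\rangle\,ds+h\,\partial_t\varphi(m_0,t_0)+o(h)$. It then bounds the frozen integrand by $H(m_0,\nabla_m\varphi(m_0,t_0))$. You instead argue by contradiction: you spread the strict inequality to an $H^1\times\R$-neighbourhood via Lemma~\ref{lemma:semilip hamilton} and apply it pointwise along the trajectory.

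Both routes rest on the same estimate. The paper's $o(h)$ terms must be uniform in the $h$-dependent control $\bar\alpha$, which it covers by a ``standard decomposition''. In your version that uniformity comes for free, because $H$ is already a supremum over $\tilde{\mathcal A}$ and Lemma~\ref{lemma:semilip hamilton} is uniform in $a$. The cost is an indirect argument rather than a direct inequality with slack $\eps$.

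One minor point: your appeal to Remark~\ref{rem:mod-cont} is not needed. Remark~\ref{remarkvitale} already gives $H^1$ control of $m(s)-m_0$, and $H^1$-closeness implies the $L^2$-closeness required both for the local-minimum neighbourhood and for the continuity of $\nabla_m\varphi$.
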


\begin{proof}
    Note that $V(m,T)=\psi(m)$ for every $m\in \bar{X}_c(T)$ (hence \emph{i-b)} and \emph{ii-b)} hold). Moreover, $V\in C(\bar{X}_c)$ by Theorem~\ref{th:lipcontinuity-of-V}.\\
    \emph{i-a)} Let $\varphi\in C^1(L^2(\R^n)\times [0,T])$ and $(m_0,t_0)\in \bigcup_{t\in (0,T)} \left[ X_c(t)\times\{t\} \right]$ be such that $V-\varphi$ attains a local maximum relative to $X_c$ at $(m_0,t_0)$, i.e., there exists $r>0$ such that for all $(m,t)\in X_c$ with $\|m-m_0\|_{L^2}<r$ and $ |t-t_0|<r$
    \begin{equation}\label{eq:subsol1}
        V(m_0,t_0) - \varphi(m_0,t_0) \geq V(m,t)-\varphi(m,t).
    \end{equation}
    Let $h=h(m_0,t_0)$ be as in Remark~\ref{rem:short-time} and fix a (time-)constant control $a\in\tilde{\mathcal{A}}$. Set
    $\displaystyle \bar{h}:=\min\left(h,r/L_T,r\right)$. Then, for every $t\in [t_0,t_0+\bar{h}]$, one has $(m(\cdot,t;a,m_0,t_0),t)\in X_c$ and, by \eqref{eq:subsol1} together with Theorem~\ref{th:dpp} (DPP),
    \begin{equation*}\label{eq:subsol2}
        - \frac{1}{t-t_0}\int_{t_0}^t \ell(m(s),a)\;ds -\frac{\varphi(m(t),t)-\varphi(m_0,t_0)}{t-t_0} \leq 0\qquad \forall t\in (t_0,t_0+\bar{h}).
    \end{equation*}
    Passing to the limit $t\downarrow t_0$, and using also Remark~\ref{rem:frechet}, we obtain
    \[
        -\ell(m_0,a) + \left\langle \nabla_m\varphi(m_0,t_0),\,\diver(am_0) \right\rangle - \partial_t\varphi(m_0,t_0) \leq 0.
    \]
    Since $a\in\tilde{\mathcal{A}}$ is arbitrary, it follows that
    $\displaystyle - \partial_t\varphi(m_0,t_0) + H(m_0,\nabla_m\varphi(m_0,t_0))\leq 0$.\\
    \emph{ii-a)} Let $\varphi\in C^1(L^2(\R^n)\times [0,T])$ and $(m_0,t_0)\in \bigcup_{t\in (0,T)} \left[ \bar{X}_c(t)\times\{t\} \right]$ be such that $V-\varphi$ attains a local minimum relative to $\bar{X}_c$ at $(m_0,t_0)$, i.e., there exists $r>0$ such that for all $(m,t)\in \bar{X}_c$ with $\|m-m_0\|_{L^2}<r$ and $ |t-t_0|<r$
    \begin{equation}\label{eq:supsol1}
        V(m_0,t_0) - \varphi(m_0,t_0) \leq V(m,t)-\varphi(m,t).
    \end{equation}
    Fix $\eps>0$ and $h\ll1$. By Theorem~\ref{th:dpp}, there exists $\bar{\alpha}\in\mathcal{A}(m_0,t_0)$ (throughout the proof, we use $\bar\alpha(s)$ for $\bar\alpha(\cdot,s)\in\tilde{\mathcal{A}}$), depending on $\eps$ and $h$, such that, setting $\bar{m}(s):=m(\cdot,s;\bar\alpha,m_0,t_0)\in \bar{X}_c(T)$, one has
    \[
        V(m_0,t_0) \geq \int_{t_0}^{t_0+h} \ell(\bar{m}(s),\bar\alpha(\cdot,s))\;ds + V(\bar{m}(t_0+h),t_0+h)-h \eps.
    \]
    Then, by \eqref{eq:supsol1}, it follows that,
    \begin{equation}\label{eq:supsol2}
         -\big[  \varphi(\bar{m}(t_0+h),t_0+h)-\varphi(m_0,t_0) \big]-\int_{t_0}^{t_0+h}\ell(\bar{m}(s),\bar{\alpha}(s))\;ds \geq -h\eps.
    \end{equation}
    By the Lipschitz continuity of $\ell$ in the state variable w.r.t the control and Remark \ref{remarkvitale}
    \[
        \int_{t_0}^{t_0+h} \ell(\bar m(s),\bar\alpha(s))\,ds
        =\int_{t_0}^{t_0+h}\ell(m_0,\bar\alpha(s))\,ds+o(h)\qquad \text{for }h \downarrow 0.
    \]
    Moreover, applying Remark~\ref{rem:frechet} to $\varphi$ along the trajectory $s\mapsto \bar m(s)$ and using the continuity of
    $\nabla_m\varphi$ and $\partial_t\varphi$, a standard decomposition yields, for $h \downarrow 0$,
    \[
        \varphi(\bar m(t_0+h),t_0+h)-\varphi(m_0,t_0)
        =-\int_{t_0}^{t_0+h} \left\langle \nabla_m\varphi(m_0,t_0),\diver\big(\bar\alpha(s)m_0\big)\right\rangle\,ds
        +h \partial_t\varphi(m_0,t_0)+o(h).
    \]
    Substituting these expansions into \eqref{eq:supsol2} and dividing by $h$, we obtain, for $h \downarrow 0$,
    \[
        -\partial_t\varphi(m_0,t_0)
        +\frac{1}{h}\int_{t_0}^{t_0+h}
        \Big[
            \left\langle \nabla_m\varphi(m_0,t_0),\diver\big(\bar\alpha(s)m_0\big)\right\rangle
            -\ell(m_0,\bar\alpha(s))
        \Big]ds
        \geq -\eps+o(1).
    \]
    Since $\displaystyle\left\langle \nabla_m\varphi(m_0,t_0),\diver\big(\bar\alpha(s)m_0\big)\right\rangle -\ell(m_0,\bar\alpha(s))\leq H\big(m_0,\nabla_m\varphi(m_0,t_0)\big)$ for all $s\in[t_0,t_0+h]$, we infer
    \[
        -\partial_t\varphi(m_0,t_0)+H\big(m_0,\nabla_m\varphi(m_0,t_0)\big)\geq -\eps+o(1)
        \qquad \text{for }h\downarrow 0.
    \]
    Letting $h \downarrow 0$ and then $\eps\downarrow 0$, we conclude that
    $-\partial_t\varphi(m_0,t_0)+H\big(m_0,\nabla_m\varphi(m_0,t_0)\big)\ge 0$.
\end{proof}

\section{Comparison principle and uniqueness}
\label{sec:comparison}

In this section we prove that, under our hypotheses, the value function is indeed the unique Lipschitz continuous constrained viscosity solution of \eqref{eq:HJB}.
\begin{lemma}\label{lemma:inward-cone}
    There exists a constant $B>0$ such that for every $(\bar{m},\bar{t})\in \bar{X}_c(T)\times [0,T)$,
    \[ 
        \bar{X}_c(T)\cap B_{L^2}(m(\cdot,\bar{t}+\eps;\zeta a_p,\bar{m},\bar{t}),\zeta\eps B) \subseteq X_c(T) \qquad \;\forall\zeta\in (0,1],\forall \eps \in (0,\min\{B,T-\bar{t}\}],
    \]
    where $a_p\in\tilde{\mathcal{A}}$ is the pushback control defined in Subsection~\ref{subsec:regularity}. 
\end{lemma}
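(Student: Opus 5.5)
The plan is to show that pushing with $\zeta a_p$ for time $\eps$ strictly decreases the weighted mass outside $\Omega$ by an amount proportional to $\zeta\eps$, and then to absorb the $L^2$-ball by a Cauchy--Schwarz bound. Set $m_\zeta(s):=m(\cdot,s;\zeta a_p,\bar m,\bar t)$. Note first that $\zeta a_p\in\tilde{\mathcal{A}}$ for $\zeta\in(0,1]$, since $\tilde{\mathcal{A}}$ is balanced. Hence $m_\zeta(s)$ is well defined and lies in $X(T)$ by the invariance in Proposition~\ref{prop:X-invariant-compact}; we regard $\bar m\in\bar X_c(T)$ as a starting datum at time $\bar t$, as the statement intends.

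For the main estimate I would argue as in \eqref{eq:sopra}. For a.e.\ $s$,
\[
\frac{d}{ds}\eta(m_\zeta(s))=-\zeta\int_{\tilde\Omega(T)\setminus\Omega}\big(\nabla p\cdot a_p\big)\,m_\zeta(x,s)\,dx\ \ge\ \zeta\bar M r_0\int_{\tilde\Omega(T)\setminus\Omega}m_\zeta(x,s)\,dx ,
\]
using \eqref{eq:pushback inequality} and nonnegativity of $m_\zeta$. Two regimes then arise.

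\emph{Case 1: $\eta(m_\zeta(s))\le \delta/2$ on $[\bar t,\bar t+\eps]$.} Then $\langle p,m_\zeta(s)\rangle\ge\delta/2$, so, exactly as after \eqref{eq:sopra2}, the outside mass is at least $\delta/(2\|p\|_{L^\infty(\tilde\Omega(T))})$. The derivative is then $\ge \zeta C_1$, with $C_1$ as in \eqref{eq:ferrara}, which gives $\eta(m_\zeta(\bar t+\eps))\ge \eta(\bar m)+\zeta C_1\eps\ge \zeta C_1\eps$.

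\emph{Case 2: $\eta(m_\zeta(s_0))>\delta/2$ for some $s_0$.} The derivative is always $\ge 0$, so $\eta$ is nondecreasing along the pushback trajectory and $\eta(m_\zeta(\bar t+\eps))>\delta/2$.

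Combining the two cases, $\eta(m_\zeta(\bar t+\eps))\ge\min\{\zeta C_1\eps,\delta/2\}$. Choosing $B\le \delta/(2C_1)$ ensures $\zeta C_1\eps\le \delta/2$ whenever $\eps\le B$ and $\zeta\le1$, so we get $\eta(m_\zeta(\bar t+\eps))\ge \zeta C_1\eps$ in both cases.

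For the conclusion, take $m\in\bar X_c(T)$ with $\|m-m_\zeta(\bar t+\eps)\|_{L^2}<\zeta\eps B$. By Cauchy--Schwarz on $\tilde\Omega(T)$, which contains all supports,
\[
\eta(m)\ \ge\ \eta(m_\zeta(\bar t+\eps))-\|p\|_{L^2(\tilde\Omega(T))}\,\zeta\eps B\ \ge\ \zeta\eps\big(C_1-B\|p\|_{L^2(\tilde\Omega(T))}\big)>0
\]
provided $B<C_1/\|p\|_{L^2(\tilde\Omega(T))}$. Hence $m\in X_c(T)$, and the final choice is $B:=\tfrac12\min\{\delta/(2C_1),\,C_1/\|p\|_{L^2(\tilde\Omega(T))}\}$.

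The main obstacle is justifying the differentiation of $\eta$ along the flow and the boundary-free integration by parts, that is, the vanishing of $p\,m\,a_p\cdot\nu$ on $\partial\Omega$ where $p=0$ and of the flux at the outer boundary where the support ends. I would reuse verbatim the computation \eqref{eq:sopra} from Lemma~\ref{lemma:remake soner}, which needs only $p\in W^{1,\infty}$, $m_\zeta(s)\in H^1$ with compact support, and $m_\zeta\in W^{1,1}(H^1)$ from Proposition~\ref{prop:useful}. A secondary point to check is the lower bound on the outside mass when $\langle p,m\rangle\ge\delta/2$, which uses $p\le\|p\|_{L^\infty(\tilde\Omega(T))}$ on the support.
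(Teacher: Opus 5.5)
Your proof is correct, and it differs from the paper's in one genuine respect.

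\textbf{The paper's route.} The paper splits according to the initial datum.
\begin{itemize}
\item If $\langle p,\bar m\rangle\le\delta/2$, it ignores the pushback gain altogether. It only uses $\|p\|_{L^2(\tilde\Omega(T))}\zeta\eps B\le\|p\|_{L^2(\tilde\Omega(T))}B^2\le\delta/2$; this is where the term $\sqrt{\delta/(2\|p\|_{L^2})}$ in its $B$ comes from.
\item If $\langle p,\bar m\rangle>\delta/2$, it invokes the uniform time-Lipschitz constant $L_\eta$ of $t\mapsto\langle p,m(\cdot,t)\rangle$ and the restriction $\eps\le\delta/(4L_\eta)$. This keeps $\langle p,m\rangle\ge\delta/4$, hence the outside mass $\ge\delta/(4d)$, on the whole interval, and the pushback gain then beats the ball radius.
\end{itemize}

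\textbf{Your route.} You split according to the behaviour of the trajectory instead. You replace the Lipschitz bound by the observation that $\eta$ is nondecreasing along the $\zeta a_p$-flow, since the integrand $-\zeta(\nabla p\cdot a_p)m_\zeta$ is nonnegative. This gives the single margin $\eta(m_\zeta(\bar t+\eps))\ge\min\{\zeta C_1\eps,\delta/2\}=\zeta C_1\eps$ in both cases. Your $B$ then depends only on $\delta,\bar M,r_0,\|p\|_{L^\infty}$ and $\|p\|_{L^2}$, with no reference to $L_\eta$. It also yields a quantitative interior margin $\eta(m)\ge\zeta\eps C_1/2$ for every $m$ in the ball.

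\textbf{What your route buys.} The lemma deliberately does not assume $(\bar m,\bar t)\in\bar X_c$: in the comparison proof it is applied to $(\bar m_2,\tau)$ with $\bar m_2\in\bar X_c(\bar t_2)$. So the uniform constant $L_\eta$, obtained via Remark~\ref{remarkvitale} for data in $X$, is not literally covered and would have to be re-derived, e.g.\ from Proposition~\ref{prop:boundW2infty}. Your monotonicity argument needs only $m_\zeta\ge0$ and the pushback inequality. The paper's route, in exchange, is slightly shorter in the easy case.

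\textbf{A small correction.} Proposition~\ref{prop:X-invariant-compact} does not give $m_\zeta(s)\in X(T)$, because it requires $\bar m\in X(\bar t)$. What you actually use, exactly as the paper does, is that $m_\zeta\ge0$ and that its support stays in $\tilde\Omega(T)$. Both arguments take these facts for granted in the same way.
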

\noindent  Note that, in general, we are not requiring that $(\bar m,\bar t)\in \bar X_c$. Lemma \ref{lemma:inward-cone} means that starting from $(\bar m,\bar t)\in\bar{X}_c(T)\times [0,T)$, i.e. $\langle p,\bar m\rangle\le\delta$, with a suitable rescaled pushback control, at least for immediately subsequent and close times $t$, one can center at each point $m(t)$ of the trajectory a $L^2$-
ball with radius growing linearly in time, while remaining entirely contained in $X_c(T)$, that is $\langle p,m(t)\rangle<\delta$. Compare with the cone property for the finite-dimensional case in Soner \cite{Soner1986StateConstraintI}, and with the one in Cannarsa-Gozzi-Soner \cite{CannarsaGozziSoner1991HilbertHJB} for their infinite-dimensional problem. See also Remark \ref{rem:penalization}.

\begin{proof}[Proof of Lemma \ref{lemma:inward-cone}]
    Let $L_\eta$ be the Lipschitz constant of $t\mapsto \langle p, m(\cdot,t;\alpha,\bar{m},\bar{t})\rangle$ (independent of $(m_0,t_0,\alpha)$, as noted in the proof of Lemma~\ref{lemma:remake soner}), let $d:=\sup\{ p(x) : x\in \tilde{\Omega}(T)  \}$, and let $r_0$ be as in \eqref{eq:uniform}. Set 
    \begin{equation}\label{eq:inward-cone1}
        B:=\min \left\{ \sqrt{\frac{\delta}{2\|p\|_{L^2(\tilde{\Omega}(T))}}},\frac{\delta}{4L_\eta},\frac{\bar{M}r_0\delta}{4d\|p\|_{L^2(\tilde{\Omega}(T))}} \right\}.
    \end{equation}
    Let $\tilde{m}\in \bar{X}_c(T)\cap B_{L^2}(m(\cdot,\bar{t}+\eps;\zeta a_p,\bar{m},\bar{t}),\zeta\eps B)$; we show that $\langle p,\tilde{m} \rangle < \delta$. \\
    Explicitly, using \eqref{eq:continuity} and the divergence theorem as in the proof of Lemma~\ref{lemma:remake soner} (see the second equalities in \eqref{eq:sopra}),
    \begin{align}
        \nonumber \langle p,\tilde{m} \rangle & = \int_{\R^n} p(x)[\tilde{m}(x)-m(x,\bar{t}+\eps;\zeta a_p,\bar{m},\bar{t})]\;dx + \langle p,m(\cdot,\bar{t}+\eps;\zeta a_p,\bar{m},\bar{t})\rangle\\ 
        \nonumber & \leq \|p\|_{L^2(\tilde{\Omega}(T))}\| \tilde{m}-m(\cdot,\bar{t}+\eps;\zeta a_p,\bar{m},\bar{t}) \|_{L^2(\R^n)} + \langle p,\bar{m}\rangle +\\ 
        \nonumber & \phantom{\leq}\, + \int_{\bar{t}}^{\bar{t}+\eps} \int_{\tilde{\Omega}(T)\setminus \Omega} \nabla p(x) \cdot \zeta a_p(x)m(x,t;\zeta a_p,\bar{m},\bar{t})\;dx\;dt\\ 
        \label{eq:inward-cone2} & < \|p\|_{L^2(\tilde{\Omega}(T))} \zeta \eps B + \langle p,\bar{m}\rangle - \zeta \bar{M} r_0 \int_{\bar{t}}^{\bar{t}+\eps}\int_{\tilde{\Omega}(T)\setminus \Omega} m(x,t;\zeta a_p,\bar{m},\bar{t})\;dx\;dt,
    \end{align}
    where in the last inequality we used \eqref{eq:pushback inequality}. \\ 
    If $\langle p,\bar{m}\rangle \leq \delta/2$, then it clearly follows from \eqref{eq:inward-cone2}, using \eqref{eq:inward-cone1}, that
    \[ 
          \langle p,\tilde{m} \rangle  < \|p\|_{L^2(\tilde{\Omega}(T))} B^2 + \frac{\delta}{2} \leq \delta.
    \]
    If instead $\langle p,\bar{m}\rangle > \delta/2$, then by the definition of $L_\eta$ and since $\eps \leq \delta/(4L_\eta)$, we have, similarly arguing as in \eqref{eq:sopra2}, that $\displaystyle \int_{\Omega^c}m(x,t;\zeta a_p,\bar{m},\bar{t})\;dx\geq \frac{\delta}{4d}$ for every $t\in [\bar{t},\bar{t}+\eps]$. It then follows from \eqref{eq:inward-cone2} that 
    \[  
        \langle p,\tilde{m}\rangle < \|p\|_{L^2(\tilde{\Omega}(T))}\zeta \eps B + \delta -\zeta \bar{M}r_0 \eps \frac{\delta}{4d}\leq \delta,
    \]
    thanks to \eqref{eq:inward-cone1}.
\end{proof}

\begin{remark}\label{rem:adattato}
    By adapting the finite-dimensional argument in \cite[Ch.~II, Sec.~2, Lemma~2.10]{BardiCapuzzoDolcetta1997}, one can prove that, if $u$ satisfies Definition~\ref{def:solvisc}, item~\emph{i-a)}, then the same sentence still holds when the open interval $(0,T)$ is replaced by the semi-open interval $[0,T)$. The same applies to the definition of supersolution in item~\emph{ii-a)}. In other words, in both cases one may also allow $t_0=0$.
\end{remark}

\begin{theorem}\label{th:comparision}
    Let $u_1,u_2\in C(\bar{X}_c)$ be Lipschitz continuous (as $V$ in Theorem \ref{th:lipcontinuity-of-V}). Assume that $u_1$ is a viscosity subsolution of \eqref{eq:HJB} in $X_c$, while $u_2$ is a viscosity supersolution of \eqref{eq:HJB} in $\bar{X}_c$. Then
    \[u_1(m,t)-u_2(m,t)\leq 0\qquad \forall (m,t)\in\bar{X}_c.\]
\end{theorem}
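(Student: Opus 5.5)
We argue by contradiction, following Soner's doubling-of-variables scheme adapted to the compact set $\bar X_c$. Suppose $\sup_{\bar X_c}(u_1-u_2)=:\sigma>0$. By continuity and Remark \ref{rem:adattato}, together with the terminal conditions i-b), ii-b) (which give $u_1\le\psi\le u_2$ at $t=T$), we may work with the perturbed function $u_1(m,t)-u_2(m,t)-\lambda(T-t)$ for small $\lambda>0$. Its supremum is still positive, and it is not attained at $t=T$. For parameters $\gamma,\zeta>0$ small we consider, on $\bar X_c\times\bar X_c$, the penalized functional
\[
\Phi(m,t,\mu,s):=u_1(m,t)-u_2(\mu,s)-\lambda(T-s)-\frac{\bigl\|m-\mu-\bigl(m(\cdot,\cdot;\zeta a_p,\cdot,\cdot)\text{-shift}\bigr)\bigr\|_{L^2}^2}{\gamma^2}-\frac{|t-s|^2}{\gamma^2}.
\]
Concretely, the test term will be $\bigl\|\frac{m-\tilde\mu}{\gamma}-\zeta B\,e\bigr\|^2$-type. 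The cleanest choice, \`a la Soner, is to compare $m$ with the point $\mu$ pushed by $\zeta a_p$ for time $\gamma$:
\[
\frac{\|m-m(\cdot,s+\gamma;\zeta a_p,\mu,s)\|_{L^2}^2}{\gamma^2}\quad\text{with }\zeta=\zeta(\gamma)\text{ chosen proportional to the Lipschitz constants.}
\]
Since $\bar X_c$ is compact in $H^1\times[0,T]$ (Notation after Assumption \ref{ass:Omegaandweight}) and everything is continuous, a maximum point $(m_\gamma,t_\gamma,\mu_\gamma,s_\gamma)$ exists. No Ekeland/Stegall-type perturbation is needed, which is the key advantage of the invariant compact set.

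\textbf{Step 1 (a priori estimates).} Comparing $\Phi$ at the maximum with $\Phi(\mu,s,\mu,s)$ and using the Lipschitz property of $u_1$ together with Proposition \ref{prop:useful}(ii) on the shifted point gives
\[
\|m_\gamma-m(\cdot,s_\gamma+\gamma;\zeta a_p,\mu_\gamma,s_\gamma)\|_{L^2}+|t_\gamma-s_\gamma|\le C\gamma^2.
\]
Hence $\|m_\gamma-\mu_\gamma\|_{L^2}\le C\gamma$ and, by \eqref{eq:radice}, $\|m_\gamma-\mu_\gamma\|_{H^1}\le C\sqrt\gamma$. Moreover $t_\gamma,s_\gamma<T$ for small $\gamma$.

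\textbf{Step 2 (interior point for the subsolution).} This is where Lemma \ref{lemma:inward-cone} enters, applied with $\bar m=\mu_\gamma$, $\bar t=s_\gamma$ and $\eps=\gamma$. Because $m_\gamma\in\bar X_c(T)$ lies in the ball of radius $C\gamma^2\le\zeta\gamma B$ around $m(\cdot,s_\gamma+\gamma;\zeta a_p,\mu_\gamma,s_\gamma)$ once $\gamma$ is small relative to $\zeta$, the lemma yields $m_\gamma\in X_c(T)$. Thus $(m_\gamma,t_\gamma)$ lies in $X_c$, where the subsolution test i-a) is available. I expect this to be the main obstacle. One must reconcile the radius $\zeta\gamma B$ with the $O(\gamma^2)$ estimate, choose $\zeta$ (for instance fixed, or $\zeta\to0$ slower than $\gamma$), and make sure that $(m_\gamma,t_\gamma)$ is a local maximum relative to $X_c$ of $u_1-\varphi_1$ for a $C^1(L^2\times[0,T])$ test function $\varphi_1$. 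The squared $L^2$ norm is Fr\'echet $C^1$, and $\mu\mapsto m(\cdot,s+\gamma;\zeta a_p,\mu,s)$ is affine in $\mu$, by \eqref{eq:representation}.

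\textbf{Step 3 (viscosity inequalities and conclusion).} Apply i-a) at $(m_\gamma,t_\gamma)$ and ii-a) at $(\mu_\gamma,s_\gamma)$, relative to $\bar X_c$. The gradients are $q_1=2(m_\gamma-m^\gamma_{\rm push})/\gamma^2$ and $q_2$, which equals $q_1$ composed with the adjoint of the linear push map. Both have $L^2$-norm bounded by $C$ by Step 1, the time derivatives cancel up to $\lambda$, and we get
\[
\lambda\le H(\mu_\gamma,q_2)-H(m_\gamma,q_1).
\]
Lemma \ref{lemma:semilip hamilton} bounds the right-hand side by $M\tilde C_U\|q_1-q_2\|_{L^2}+M\|q_1\|_{L^2}\|m_\gamma-\mu_\gamma\|_{H^1}+L_\ell\|m_\gamma-\mu_\gamma\|_{L^2}$. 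The last two terms are $O(\sqrt\gamma)$. For $\|q_1-q_2\|$, the adjoint of the push map at time-length $\gamma$ differs from the identity by $O(\gamma)$ in operator norm when acting on bounded sets; this uses the flow estimates of Proposition \ref{prop:appendix} applied to $\zeta a_p$. This gives $\lambda\le o(1)$ as $\gamma\to0$, a contradiction. If this last estimate proves delicate, I would instead take the simpler penalization $\|m-\mu-\mathrm{(push\ increment)}\|^2/\gamma^2$ with a fixed displacement, making $q_1=q_2$ exactly, and verify the inward-cone inclusion in the same way.
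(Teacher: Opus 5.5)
There is a genuine gap in Steps 1--2, caused by the unshifted time term $|t-s|^2/\gamma^2$. The pushed state $m(\cdot,\tau+\gamma;\zeta a_p,z,\tau)$ is only known to be admissible at time $\tau+\gamma$. The slices $X(t)$ grow with $t$, and, as Remark~\ref{rem:penalization} points out, an inward perturbation of $z$ cannot be placed at the same time $\tau$. So your $\Phi$ has no zero-penalty point near the maximizer $(z,\tau)$:
\begin{itemize}
\item at $(m(\cdot,\tau+\gamma;\zeta a_p,z,\tau),\tau+\gamma,z,\tau)$ the time term costs $1$;
\item at $(z,\tau,z,\tau)$, the point you compare with in Step 1, the push term costs $\|z-m(\cdot,\tau+\gamma;\zeta a_p,z,\tau)\|_{L^2}^2/\gamma^2$, which is of order $\zeta^2$.
\end{itemize}
What actually follows is $\|m_\gamma-m(\cdot,s_\gamma+\gamma;\zeta a_p,\mu_\gamma,s_\gamma)\|_{L^2}\le\|z-m(\cdot,\tau+\gamma;\zeta a_p,z,\tau)\|_{L^2}+o(\gamma)$, not $C\gamma^2$. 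Even with a zero-penalty comparison point, the Lipschitz bounds would give only $O(\gamma^{3/2})$, because they involve $\|m-\mu\|_{L^2}\sim\gamma$.

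This bound is never below the cone radius. If $z$ lies on the constraint boundary, argue as in the proof of Lemma~\ref{lemma:inward-cone} and use Cauchy--Schwarz together with the third entry of \eqref{eq:inward-cone1}:
$\|p\|_{L^2}\|z-m(\cdot,\tau+\gamma;\zeta a_p,z,\tau)\|_{L^2}\ge\langle p,z\rangle-\langle p,m(\cdot,\tau+\gamma;\zeta a_p,z,\tau)\rangle\ge\zeta\gamma\bar M r_0\delta/(4d)\ge\|p\|_{L^2}\,\zeta\gamma B$.
So your estimates are compatible with the maximizer being $(z,\tau,z,\tau)$ itself, on the boundary. Lemma~\ref{lemma:inward-cone} then cannot be invoked, and test i-a) is not available. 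Your fallback with a fixed displacement keeps $|t-s|^2/\gamma^2$ and has the same defect.

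What is missing are three features of \eqref{eq:penalization}.
\begin{itemize}
\item \emph{Shifted time term.} The term $|\tfrac{t_1-t_2}{\eps}-1|^2$ makes $(m(\cdot,\tau+\eps;\zeta a_p,z,\tau),\tau+\eps,z,\tau)$ an admissible zero-penalty point. Hence $\sup\Phi_{\zeta,\eps}\ge w-O(\eps)$, the penalty at the maximizer is $O(\eps)$, and $\|\bar m_1-\bar m_2-(m(\cdot,\tau+\eps;\zeta a_p,z,\tau)-z)\|_{L^2}\le C\eps^{3/2}$.
\item \emph{Frozen displacement.} The displacement is computed once, from $z$, so the two test gradients differ only by $2(\bar m_2-z)$ and no adjoint of the push map appears. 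That adjoint is the pull-back $q\mapsto q\circ\Phi_{\tau,\tau+\gamma}$, which is not $O(\gamma)$-close to the identity in $L^2$ operator norm, so your Step~3 claim about it is false as stated.
\item \emph{Localization.} The term $\|m_2-z\|_{L^2}^2$ gives $\|\bar m_2-z\|_{L^2}\le C\sqrt\eps$. This lets you replace the frozen displacement by the one issued from $\bar m_2$ at cost $C\zeta\eps^{5/4}$, and so land in the ball of radius $B\zeta\eps$ for $\eps\le\hat\eps_\zeta$.
\end{itemize}

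Step~3 is also too optimistic. $\|q_1\|_{L^2}$ is only $O(\eps^{-1/2})$, not bounded. The cross term $M\|q_1\|_{L^2}\|\bar m_1-\bar m_2\|_{H^1}$, controlled through \eqref{eq:radice}, tends as $\eps\to0$ to a quantity of order $\sqrt\zeta$, not to zero. This is why the paper lets $\eps\to0$ first and only then $\zeta\to0$, and why a fixed $\zeta$ does not suffice.
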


\begin{proof}
    By a standard argument, fix $\gamma>0$ and set $u_\gamma:=u_1-\gamma(T-t)$; then $u_\gamma$ is a strict subsolution, namely at every interior test point as in Definition~\ref{def:solvisc}, \emph{i-a)}, one has $-\partial_t\varphi+H(m,\nabla_m\varphi)\le -\gamma$. Hence it is enough to prove $w_\gamma:=\sup_{\bar X_c}(u_\gamma-u_2)\le 0$, since this yields $u_1\le u_2+\gamma T$ on $\bar X_c$ and, letting $\gamma\downarrow 0$, $u_1\le u_2$. Fix $\gamma>0$, set $w:=w_\gamma$ and $u:=u_\gamma$, and argue by contradiction assuming $w>0$; let $(z,\tau)\in \bar{X}_c$ be such that $u(z,\tau)-u_2(z,\tau)=w$ (which exists by compactness of $\bar{X}_c$), and note that $\tau\neq T$ by \emph{i-b)} and \emph{ii-b)} in Definition~\ref{def:solvisc}.\\ 
    For every $\zeta\in(0,1]$ and $\eps\in(0,T-\tau]$, define the map $\Phi_{\zeta,\eps}:\bar{X}_c\times \bar{X}_c\to\R$ (see also Remark \ref{rem:penalization})
    \begin{align}
        \nonumber \Phi_{\zeta,\eps}(m_1,t_1,m_2,t_2) & := u(m_1,t_1)-u_2(m_2,t_2) - \left\|\frac{m_1-m_2}{\eps}-\frac{m(\cdot,\tau+\eps;\zeta a_p,z,\tau)-z}{\eps} \right\|_{L^2}^2\\ 
        \label{eq:penalization}& \phantom{:=}\, -\|m_2-z\|_{L^2}^2 -\left|\frac{t_1-t_2}{\eps}-1\right|^2-\eps(t_1+t_2),
    \end{align}
    where $a_p$ is as in $\eqref{eq:pushback-control-def}$. Let $(\bar{m}_1,\bar{t}_1,\bar{m}_2,\bar{t}_2)$ (which depends on $\zeta,\eps$) be a global maximum point of $\Phi_{\zeta,\eps}$ (which exists by compactness of $\bar{X}_c\times \bar{X}_c$). Adding $\pm u(z,\tau)$ and using Remark~\ref{remarkvitale},
    \begin{align}
        \nonumber \Phi_{\zeta,\eps}(m(\cdot,\tau+\eps;\zeta a_p,z,\tau),\tau+\eps,z,\tau) & = u(m(\cdot,\tau+\eps;\zeta a_p,z,\tau),\tau+\eps)-u_2(z,\tau)-\eps(2\tau+\eps)\\
        \nonumber & \geq -L_1(\|m(\cdot,\tau+\eps;\zeta a_p,z,\tau)-z\|_{L^2}+\eps)+w-\eps(2T+\eps)\\
        \label{confr1}& \geq -L_1(L_T\eps+\eps)+w-3T\eps,
    \end{align}
    where $L_1$ is the Lipschitz constant of $u$. Moreover, $u(\bar{m}_1,\bar{t}_1)-u_2(\bar{m}_2,\bar{t}_2) \leq L_1(\|\bar{m}_1-\bar{m}_2\|_{L^2}+|\bar{t}_1-\bar{t}_2|)+w$; therefore, since $(\bar{m}_1,\bar{t}_1,\bar{m}_2,\bar{t}_2)$ is a maximum point, using \eqref{confr1},
    \begin{align}
        \label{confr2} \bigg\|\frac{\bar{m}_1-\bar{m}_2}{\eps}&-\frac{m(\cdot,\tau+\eps;\zeta a_p,z,\tau)-z}{\eps} \bigg\|_{L^2}^2+\|\bar{m}_2-z\|_{L^2}^2 +\bigg|\frac{\bar{t}_1-\bar{t}_2}{\eps}-1\bigg|^2\leq \\ 
        \nonumber &\leq L_1(\|\bar{m}_1-\bar{m}_2\|_{L^2}+|\bar{t}_1-\bar{t}_2|+L_T\eps+\eps)+3T\eps\qquad \forall(\zeta,\eps)\in (0,1]\times (0,T-\tau]. 
    \end{align}
    Since $\bar{X}_c$ is bounded, the right-hand side of \eqref{confr2} is bounded by a constant $C_1>0$; in particular,
    \begin{enumerate}
        \renewcommand{\labelenumi}{(\roman{enumi})}
        \renewcommand{\theenumi}{(\roman{enumi})}
        \item \label{confr3} adding $\displaystyle \pm \frac{1}{\eps}(m(\cdot,\tau+\eps;\zeta a_p,z,\tau)-z)$ and using Remark~\ref{remarkvitale}, $\|\bar{m}_1-\bar{m}_2\|_{L^2} \leq \eps (\sqrt{C_1}+L_T)$,
        \item \label{confr4} adding $\pm1$, $|\bar{t}_1-\bar{t}_2|\leq \eps(\sqrt{C_1}+1) $.
    \end{enumerate}
    We claim that for $\zeta \in (0,1],\eps\ll 1$ one has $\bar{t}_1,\bar{t}_2\in [0,T)$. 
   Extending $f:=u-u_2$ to $\bar{X}_c(T)\times [0,T]$ and applying Proposition \ref{prop:app-new} to $ g(t):=\max_{m\in \bar{X}_c(T)} \left[u(m,t)-u_2(m,t)\right]$, we deduce that, since $g(T)\leq 0$, there exists $\tau_0\in (0,T)$ such that
\begin{equation}\label{mon}
    u(m,t)-u_2(m,t)\leq \frac{w}{8}
    \qquad \forall (m,t)\in \bar{X}_c : t\in [T-\tau_0,T].
\end{equation}
    Assume by contradiction that $\bar{t}_1\in [T-\tau_0,T]$. By \eqref{mon}, \ref{confr3}, and \ref{confr4} (here $L_2$ is the Lipschitz constant of $u_2$)
    \begin{align*}
        \Phi_{\zeta,\eps}(\bar{m}_1,\bar{t}_1,\bar{m}_2,\bar{t}_2)
        &\leq u(\bar{m}_1,\bar{t}_1)-u_2(\bar{m}_1,\bar{t}_1)+u_2(\bar{m}_1,\bar{t}_1)-u_2(\bar{m}_2,\bar{t}_2)\\
        &\leq \frac{w}{8}+L_2\big(\|\bar{m}_1-\bar{m}_2\|_{L^2}+|\bar{t}_1-\bar{t}_2|\big)\leq \frac{w}{4},
    \end{align*}
    for $\varepsilon$ sufficiently small. On the other hand, \eqref{confr1} gives, possibly after further decreasing $\varepsilon$,
    \[
        \Phi_{\zeta,\eps}(m(\cdot,\tau+\eps;\zeta a_p,z,\tau),\tau+\eps,z,\tau)
        \geq w-\eps\big[L_1(L_T+1)+3T\big]\geq \frac{w}{2},
    \]
    contradicting the maximality of $(\bar{m}_1,\bar{t}_1,\bar{m}_2,\bar{t}_2)$. Therefore $\bar{t}_1\in [0,T-\tau_0]$ and it easily follows from \ref{confr4} that $\bar{t}_2\neq T$.\\ 
    We now show that for every $\zeta \in (0,1]$ there exists $\hat{\eps}_\zeta$ such that $\bar{m}_1\in X_c(T)$ for all $\eps \in (0,\hat{\eps}_\zeta]$. Let $C_X$ be as in \eqref{eq:radice}, $\tilde{L}$ as in Remark~\ref{remarkvitale}, and $B$ as in Lemma~\ref{lemma:inward-cone}; let $\zeta \in(0,1]$ and choose $\hat{\eps}_\zeta$ such that
     \begin{equation}\label{app}
        L_1(\hat{\eps}_\zeta(\sqrt{C_1}+L_T)+\hat{\eps}_\zeta(\sqrt{C_1}+1)+L_T\hat{\eps}_\zeta+\hat{\eps}_\zeta)+3T\hat{\eps}_\zeta
        < \min\left\{ \left[\frac{B\zeta}{2}\right]^2,\left[\frac{B}{2M \tilde{L} C_X}\right]^4 \right\}.
    \end{equation}
    Adding $\pm \bar{m}_2\pm z\pm m(\cdot,\tau+\eps;\zeta a_p,\bar{m}_2,\tau)$ and using \eqref{app}, \eqref{eq:continuity}, Remark~\ref{remarkvitale} and \eqref{confr2},
    \begin{align*}
            \|\bar{m}_1-m(\cdot,\tau+\eps;&\zeta a_p,\bar{m}_2,\tau)\|_{L^2}  \leq  \eps \left\|\frac{\bar{m}_1-\bar{m}_2}{\eps}- \frac{m(\cdot,\tau+\eps;\zeta a_p,z,\tau) -z}{\eps}\right\|_{L^2}   +\\
            & \phantom{\leq}+ \| \bar{m}_2-m(\cdot,\tau+\eps;\zeta a_p,\bar{m}_2,\tau)-\big[ z-m(\cdot,\tau+\eps;\zeta a_p,z,\tau) \big]\|_{L^2}\\
            & < \frac{B\zeta}{2}\eps + \int_\tau^{\tau+\eps}\left\|\diver(\zeta a_p [ m(\cdot,t;\zeta a_p,\bar{m}_2,\tau)-m(\cdot,t;\zeta a_p,z,\tau)])\right\|_{L^2}\;dt  \\
            & \leq \frac{B\zeta}{2}\eps + \zeta \int_\tau^{\tau+\eps} \|a_p\|_{W^{1,\infty}}\|m(\cdot,t;\zeta a_p,\bar{m}_2,\tau)-m(\cdot,t;\zeta a_p,z,\tau)\|_{H^1}\;dt.\\ 
            & \leq  \frac{B\zeta}{2}\eps+\zeta M\eps \tilde{L}C_X\sqrt{\|\bar{m}_2-z\|_{L^2}}<B\eps\zeta,\qquad \forall \eps \in (0,\hat{\eps}_\zeta], 
    \end{align*}
    therefore $\bar{m}_1\in \bar{X}_c(T)\cap B_{L^2}(m(\cdot,\tau+\eps;\zeta a_p,\bar{m}_2,\tau),B\eps\zeta)\subseteq X_c(T)$ by Lemma~\ref{lemma:inward-cone}. Hence, possibly after rescaling, for every $\zeta\in(0,1]$ there exists $\hat{\eps}_{\zeta} >0$ such that
    \begin{equation}\label{questo}
        \bar{m}_1\in X_c(T),\;\bar{m}_2\in \bar{X}_c(T),\;\bar{t}_1\in [0,T),\;\bar{t}_2\in [0,T) \qquad \forall \zeta\in(0,1],\;\forall \eps \in(0,\hat{\eps}_{\zeta}].
    \end{equation}
    Let $\varphi_1\in C^1(L^2(\R^n)\times[0,T])$, defined as follows:
    \[
        \varphi_1(m,t):=u_2(\bar{m}_2,\bar{t}_2)
        +\left\|\frac{m-\bar{m}_2}{\eps}
        - \frac{m(\cdot,\tau+\eps;\zeta a_p,z,\tau)-z}{\eps} \right\|_{L^2}^2
        +\|\bar{m}_2-z\|_{L^2}^2
        +\left|\frac{t-\bar{t}_2}{\eps}-1\right|^2
        +\eps(t+\bar{t}_2).
    \]
    Clearly $u(\bar{m}_1,\bar{t}_1)-\varphi_1(\bar{m}_1,\bar{t}_1)=\Phi_{\zeta,\eps}(\bar{m}_1,\bar{t}_1,\bar{m}_2,\bar{t}_2)\geq \Phi_{\zeta,\eps}(m,t,\bar{m}_2,\bar{t}_2)=u(m,t)-\varphi_1(m,t)$ for all $(m,t)\in X_c$. Therefore, for $\zeta\in(0,1]$ and $\eps\in(0,\hat{\eps}_{\zeta}]$, \eqref{questo} gives $(\bar{m}_1,\bar{t}_1)\in \bigcup_{t\in[0,T)}\left[X_c(t)\times\{t\}\right]$, and the inequality above shows that $(\bar{m}_1,\bar{t}_1)$ is a maximum point of $u-\varphi_1$ on $X_c$. Hence, by the strict subsolution condition (Definition~\ref{def:solvisc}, item~\emph{i-a)}, cf. Remark~\ref{rem:adattato}), one has that $\partial_t\varphi_1(\bar{m}_1,\bar{t}_1)+H\big(\bar{m}_1,\nabla_m\varphi_1(\bar{m}_1,\bar{t}_1)\big)\leq -\gamma$, i.e.,
    \begin{equation}\label{sot}
        -\left[ \frac{2}{\eps}\left( \frac{\bar{t}_1-\bar{t}_2}{\eps}-1 \right)+\eps \right]
        + H\left( \bar{m}_1,
        \frac{2}{\eps}\left(\frac{\bar{m}_1-\bar{m}_2}{\eps}
        -\frac{m(\cdot,\tau+\eps;\zeta a_p,z,\tau)-z}{\eps} \right)\right)
        \leq -\gamma.
    \end{equation}
    On the other hand, define $\varphi_2 \in C^1(L^2(\R^n)\times[0,T])$ as follows:
     \[
        \varphi_2(m,t):=u(\bar{m}_1,\bar{t}_1)
        - \left\|\frac{\bar{m}_1-m}{\eps}
        - \frac{m(\cdot,\tau+\eps;\zeta a_p,z,\tau)-z}{\eps} \right\|_{L^2}^2
        -\|m-z\|_{L^2}^2
        -\left|\frac{\bar{t}_1-t}{\eps}-1\right|^2
        -\eps(\bar{t}_1+t),
    \]
    Arguing similarly, and using again \eqref{questo} together with the supersolution property of $u_2$, we obtain that for every $\zeta\in(0,1]$ and $\eps\in(0,\hat{\eps}_{\zeta}]$,
    \begin{equation*}\label{sop}
       -\left[ \frac{2}{\eps}\left( \frac{\bar{t}_1-\bar{t}_2}{\eps}-1 \right)-\eps \right]
       + H\left( \bar{m}_2,
       \frac{2}{\eps}\left(\frac{\bar{m}_1-\bar{m}_2}{\eps}
       -\frac{m(\cdot,\tau+\eps;\zeta a_p,z,\tau)-z}{\eps} \right)
       -2(\bar{m}_2-z) \right)
       \geq 0,
    \end{equation*}
    Subtracting the latter inequality from \eqref{sot}, we obtain that for every $(\zeta,\eps)\in (0,1]\times (0,\min\{ \hat{\eps}_\zeta,\gamma/4 \}]$,
    \begin{align}
        \nonumber & \Delta H_{\zeta,\eps}
        := H\left( \bar{m}_1,
        \frac{2}{\eps}\left(\frac{\bar{m}_1-\bar{m}_2}{\eps}
        -\frac{m(\cdot,\tau+\eps;\zeta a_p,z,\tau)-z}{\eps} \right)\right) +\\
        \label{contradiction} 
        & \quad
        - H\left( \bar{m}_2,
        \frac{2}{\eps}\left(\frac{\bar{m}_1-\bar{m}_2}{\eps}
        -\frac{m(\cdot,\tau+\eps;\zeta a_p,z,\tau)-z}{\eps} \right)
        -2(\bar{m}_2-z) \right)
        \leq -\gamma +2\eps
        \leq -\frac{\gamma}{2}.
    \end{align}
    Now, for every $(\zeta,\eps)\in (0,1]\times (0,T-\tau]$, using \eqref{eq:continuity}, we have
    \begin{align}
        \nonumber \bigg\| &\frac{m(\cdot,\tau+\eps;\zeta a_p,z,\tau)-z}{\eps} \bigg\|_{L^2}
         = \frac{1}{\eps} \int_\tau^{\tau+\eps}
        \left\| \diver(\zeta a_p m(\cdot,t;\zeta a_p,z,\tau)) \right\|_{L^2}\;dt\\
        \label{final}& \leq \zeta \frac{1}{\eps}\int_\tau^{\tau+\eps}
        \|a_p\|_{W^{1,\infty}}
        \|m(\cdot,t;\zeta a_p,z,\tau)\|_{H^1}\;dt \leq \zeta M\tilde{C}_U=:C_3 \zeta.
    \end{align}
    On the other hand, it follows from \eqref{confr2}, using \ref{confr3} and \ref{confr4}, that there exists $C_2>0$ such that
    \begin{equation}\label{final2}
        \bigg\|\frac{\bar{m}_1-\bar{m}_2}{\eps}-\frac{m(\cdot,\tau+\eps;\zeta a_p,z,\tau)-z}{\eps} \bigg\|_{L^2},\|\bar{m}_2-z\|_{L^2} \leq C_2\sqrt{\eps}.
    \end{equation}
    It follows from \eqref{final} and \eqref{final2} that $\|\bar{m}_1-\bar{m}_2\|_{L^2} \leq \eps(C_2\sqrt{ \eps} + C_3 \zeta)$. Finally, using Lemma \ref{lemma:semilip hamilton}, it follows from what was just shown and again \eqref{final}, \eqref{final2}, \eqref{eq:radice} and \ref{confr3},
    \begin{align}
        \nonumber |\Delta H_{\zeta,\eps}|
        & \leq M\tilde{C}_U C_2\sqrt{\eps}
        +\frac{2}{\eps} C_2\sqrt{\eps}MC_X
        \sqrt{\|\bar{m}_1-\bar{m}_2\|_{L^2}}
        +L_\ell \eps (\sqrt{C_1}+L_T)\\
        \label{eq:finalconfr}& = M\tilde{C}_U \sqrt{C_2\eps}
        +2C_2MC_X\sqrt{C_2\sqrt{ \eps} + C_3 \zeta}
        +L_\ell \eps (\sqrt{C_1}+L_T).
    \end{align}
    Hence $\displaystyle \lim_{\zeta\to0^+}\lim_{\eps\to0^+}\left|\Delta H_{\zeta,\eps}\right| \leq \lim_{\zeta\to 0^+} 2C_2MC_X\sqrt{C_3\zeta} =0 $, in contradiction with \eqref{contradiction}.
\end{proof}

\noindent As it is standard, from Theorem \ref{th:lipcontinuity-of-V}, Proposition \ref{prop:solvisc} and Theorem \ref{th:comparision}, we have the following uniqueness result.

\begin{corollary}
    The value function $V$ is the unique Lipschitz continuous constrained viscosity solution of \eqref{eq:HJB}.
\end{corollary}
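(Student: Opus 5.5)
The corollary follows by assembling the three preceding results; the plan is existence plus two applications of the comparison principle.

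\emph{Existence.} By Theorem~\ref{th:lipcontinuity-of-V}, $V$ is Lipschitz continuous on $\bar X_c$ with respect to $\|\cdot\|_{L^2}+|\cdot|$. By Remark~\ref{rem:mod-cont}, the $L^2$ and $H^1$ topologies coincide on $X(T)$, so $V\in C(\bar X_c)$ in the sense required by Definition~\ref{def:solvisc}. Proposition~\ref{prop:solvisc} then gives that $V$ is a constrained viscosity solution of \eqref{eq:HJB} in $\bar X_c$: a subsolution in $X_c$ and a supersolution in $\bar X_c$, with $V(\cdot,T)=\psi$.

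\emph{Uniqueness.} Let $u$ be any other Lipschitz continuous constrained viscosity solution of \eqref{eq:HJB} in $\bar X_c$. Two applications of Theorem~\ref{th:comparision} suffice.
\begin{itemize}
\item Take $u_1=u$ and $u_2=V$. Then $u$ is a Lipschitz subsolution in $X_c$ and $V$ is a Lipschitz supersolution in $\bar X_c$, so $u\le V$ on $\bar X_c$.
\item Take $u_1=V$ and $u_2=u$. Then $V$ is a Lipschitz subsolution in $X_c$ and $u$ is a Lipschitz supersolution in $\bar X_c$, so $V\le u$ on $\bar X_c$.
\end{itemize}
Together these give $u=V$ on $\bar X_c$.

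The only point needing care is that ``Lipschitz'' in the class of competitors must be meant in the same sense as in Theorem~\ref{th:comparision}, namely with respect to the $L^2$ norm in $m$ together with $|t|$. That theorem uses the Lipschitz constants $L_1,L_2$ precisely in the $L^2$ metric (in \eqref{confr1} and in the estimates following \eqref{mon}). I would state this explicitly, so that the uniqueness class matches the hypotheses of the comparison theorem. No further obstacle is expected, since all the substantive work is in the cited results.
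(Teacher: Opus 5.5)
Your proposal is correct and matches the paper's argument. The paper deduces the corollary ``as it is standard'' from Theorem~\ref{th:lipcontinuity-of-V} and Proposition~\ref{prop:solvisc}, which give existence in the Lipschitz class, and from Theorem~\ref{th:comparision} applied twice with the sub- and supersolution roles exchanged; your remark that the Lipschitz class must be understood with respect to $\|\cdot\|_{L^2}+|\cdot|$, as in the comparison theorem, makes explicit what the paper leaves implicit.
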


\begin{remark}\label{rem:penalization}
Here we briefly comment on the penalization function \eqref{eq:penalization} for the double variable technique in the proof of Theorem \ref{th:comparision}. It is obviously inspired by the one in Soner \cite{Soner1986StateConstraintI}, whose goal is to detach the points of maximum from the boundary of the constraint, $\int_{\Omega^c}pm=\delta$ in our case. However, besides the infinite dimension,  there are some important differences. 
The third term in the right-hand side of \eqref{eq:penalization} differs from the corresponding one in \cite{Soner1986StateConstraintI} because here the problem is time-dependent and in particular the structure of the set $\bar{X}_c$ does not allow to consider pairs of the form $(z',\tau)$ for any small inward perturbation $z'$ of $z$. Hence we need to enter the region $\int_{\Omega^c}pm<\delta$ by an admissibile trajectory given by the modulated pushback control $\zeta a_p$. This implies the use of the inner cone property as in Lemma \ref{lemma:inward-cone} and the fifth term in the right-hand side of \eqref{eq:penalization} when, as in \eqref{confr1}, we evaluate in two different instants $\tau+\varepsilon$ and $\tau$. Moreover we also need the small modulating parameter $\zeta$ in order to handle, after sending $\varepsilon\to0^+$, the last term in \eqref{eq:finalconfr}. We need this last step because of the norms estimate \eqref{eq:radice} in $\bar{X}_c$.
\end{remark}

\section{Some comments about different formulations of the state constraint}. 
\label{sec:other-constraints}

\noindent In this section we discuss why the specific weighted state constraint \eqref{eq:constraint} adopted in this paper is natural for our framework, both conceptually and technically.

\noindent A first natural candidate is the \emph{hard} support constraint
\begin{equation}\label{eq:hardonstraint}
     \supp(m(\cdot,t))\subseteq \overline{\Omega}
    \qquad \forall t\in[0,T],
\end{equation}
which for nonnegative densities is equivalent to require
\begin{equation}\label{eq:hardconstraint2}
    \int_{\R^n\setminus\Omega} m(x,t)\,dx = 0
    \qquad \forall t\in[0,T].
\end{equation}
The difficulty with \eqref{eq:hardonstraint} is that this formulation is not robust in the functional topologies used in the paper: convergence in $L^2(\R^n)$ does not control supports. For instance, one can have $m_k\to 0$ in $L^2$ while all $m_k$ have full support (for instance $m_k(x)=k^{-1}e^{-|x|^2}$), so support information is unstable under small $L^2$ perturbations.\\
Regarding, on the other hand, \eqref{eq:hardconstraint2}, we still have a problem: from the viewpoint of the constraint, every admissible state is always \emph{on the boundary}: a state with mass compactly concentrated inside $\Omega$ (e.g., sufficiently far from violation) and a state with mass concentrated arbitrarily close to violation (e.g., a concentration near $\partial\Omega$) are both encoded in the same way. This seems to be incompatible with the correction mechanism in Lemma~\ref{lemma:remake soner}, where one needs a quantitative slack variable $\eta$ that measures how close the state is to violating the constraint, so that the correction is activated only when needed and remains uniformly controlled. In particular, the gain mechanism underlying the estimate analogous to \eqref{eq:ferrara} loses its validity.


\noindent To overcome this difficulty, a natural relaxation of \eqref{eq:hardconstraint2} would be, for instance for probability densities, (for some fixed $\delta\in(0,1)$)
\begin{equation}\label{eq:hardconstraint3}
     \int_{\R^n\setminus\Omega} m(x,t)\,dx \leq \delta
    \qquad \forall t\in[0,T].
\end{equation}

\noindent This is certainly more practical than \eqref{eq:hardconstraint2} for detecting who is actually on the boundary of the constraint; however this is still not sufficient for \eqref{eq:ferrara}. The reason is that \eqref{eq:hardconstraint3} only measures how much mass is outside $\Omega$ but not where that mass is located. Therefore, mass slightly outside and mass very far from $\Omega$ are treated with the same weight. With bounded controls, this lack of spatial sensitivity prevents a uniform short-time pushback gain: it seems that one cannot, as made here, recover a pointwise estimate playing the role of \eqref{eq:pushback inequality}, and consequently  a uniform lower bound of the type used in \eqref{eq:ferrara}.

\noindent For this reason we choose a different relaxation of \eqref{eq:hardconstraint2}: the weighted constraint \eqref{eq:constraint}, namely
\begin{equation}\label{eq:hardconstraint4}
    \int_{\R^n\setminus\Omega} p(x)m(x,t)\,dx\leq\delta,
\end{equation}
with $p(x)=\dist(x,\Omega)$ (or more generally a weight with an associated pushback control, as in Remark~\ref{rem:generalization domain and weight}). This formulation indeed detects the spatial distribution of constraint-violating masses and differently penalizes them in terms of being slightly outside or far outside $\Omega$. This is exactly the information needed to recover uniform quantitative estimates in Lemma~\ref{lemma:remake soner}, in particular \eqref{eq:ferrara}.\\
Possible future studies could be the limit of the problem with constraint \eqref{eq:hardconstraint4} for $\delta \downarrow 0$ and hence, formally, the study of the problem with constraint 
\[
    \int_{\R^n\setminus\Omega} p(x)m(x,t)\,dx=0,
\]
which does not well detect the constraint-violating masses, similarly to \eqref{eq:hardconstraint2}. Another possibility is to recover the problem with constraint \eqref{eq:hardconstraint3} as the limit of problems with constraint $\eqref{eq:hardconstraint4}$ for an appropriate sequence of weights $\{p_k\}$.
\appendix
\setcounter{equation}{0}
\setcounter{theorem}{0}
\renewcommand{\theequation}{a\arabic{equation}}
\renewcommand{\thetheorem}{A\arabic{theorem}}
\renewcommand{\theHequation}{A\arabic{equation}}
\renewcommand{\theHtheorem}{A\arabic{theorem}}
\section{Appendix}
\label{app:flow-estimates}
\begin{proposition}\label{prop:appendix}
   Under Assumption~\ref{ass:alpha}, the following estimates hold uniformly for admissible controls $\alpha\in\mathcal{A}$.
   \begin{enumerate}[label=\textup{(A\arabic*)},font=\normalfont]
        \item \label{eq:flowregularity1} $\displaystyle\|\Phi_{t,s_1}-\Phi_{t,s_2}\|_{L^\infty(\R^n)}\leq M|s_1-s_2| \qquad \forall t\in [0,T),\;s_1,s_2\in[t,T].$
        \item \label{eq:flowregularity2} $\displaystyle \|D\Phi_{t,s}\|_{L^\infty(\R^n)},\|D\Psi_{t,s}\|_{L^\infty(\R^n)}\leq e^{M(s-t)}\leq e^{MT}\qquad \forall t\in [0,T),\;s\in [t,T]$.
        \item \label{eq:flowregularity3} $\displaystyle \|\Psi_{t,s_1}-\Psi_{t,s_2}\|_{L^\infty(\R^n)}\leq Me^{MT}|s_1-s_2|\qquad \forall t\in [0,T),\;s_1,s_2\in[t,T].$
        \item \label{eq:flowregularity4} $\displaystyle \|D^2\Phi_{t,s}\|_{L^\infty(\R^n)}\leq C_{\Phi,2}$ and $\|D^2\Psi_{t,s}\|_{L^\infty(\R^n)}\leq C_{\Psi,2} \qquad\forall t\in [0,T),\;s\in [t,T]$.
        \item \label{eq:flowregularity5} $\displaystyle e^{-nM(s-t)}\leq \det D\Phi_{t,s}(x)\leq e^{nM(s-t)}\qquad \forall x\in\R^n,\;t\in [0,T), \; s\in[t,T].$
        \item \label{eq:flowregularity7} Let $J_{t,s}:=(\det D\Phi_{t,s})^{-1}$. Then $\displaystyle \left\|\nabla J_{t,s}\right\|_{L^\infty(\R^n)}\leq \tilde{C}_J \qquad\forall t\in [0,T),\;s\in [t,T].$
        \item \label{eq:flowregularity8} $\displaystyle \left\|D^2J_{t,s}\right\|_{L^\infty(\R^n)}\leq C_J \qquad\forall t\in [0,T),\;s\in [t,T].$
   \end{enumerate}
   The positive constants $C_{\Phi,2}, C_{\Psi,2}, \tilde{C}_J,$ and $C_J$ depend only on $M$, $T$, and $n$.
\end{proposition}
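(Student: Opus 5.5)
The plan is to derive all seven estimates from the characteristic ODE \eqref{eq:flow equation}, using Gr\"onwall's lemma and the pointwise bounds $\|\alpha(\cdot,s)\|_{W^{3,\infty}}\le M$ for a.e.\ $s$ given by Assumption~\ref{ass:alpha}.

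\textbf{Well-posedness and (A1).} Since $\alpha\in L^2([0,T];W^{3,\infty})$ with $\alpha(\cdot,s)\in\tilde{\mathcal{A}}$ for a.e.\ $s$, the field is measurable in time and uniformly Lipschitz in space. Carath\'eodory theory then gives a unique global absolutely continuous flow $\Phi_{t,s}$. Each $\Phi_{t,s}$ is a bi-Lipschitz homeomorphism, and its inverse is the backward flow: $\Psi_{t,s}(x)=Y(t)$, where $Y$ solves $\dot Y(\tau)=\alpha(Y(\tau),\tau)$ with $Y(s)=x$. Differentiability in $x$ up to third order follows from the standard result on differentiability with respect to initial data for Carath\'eodory ODEs whose right-hand side is $C^3$ in space with bounds integrable in time. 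Estimate (A1) is immediate, because $|\partial_s\Phi|=|\alpha(\Phi,s)|\le\|\alpha(\cdot,s)\|_{L^\infty}\le M$ for a.e.\ $s$.

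\textbf{(A2), (A4), (A5).} Differentiating the ODE in $x$ gives the variational equation
\[
\partial_s D\Phi_{t,s}=D\alpha(\Phi_{t,s},s)\,D\Phi_{t,s},\qquad D\Phi_{t,t}=I,
\]
and Gr\"onwall yields $|D\Phi_{t,s}|\le e^{M(s-t)}$. The same argument applied to the backward flow of $\alpha$ on $[t,s]$ gives the bound for $D\Psi_{t,s}$, which is (A2). Differentiating once more gives
\[
\partial_s D^2\Phi=D^2\alpha(\Phi)[D\Phi,D\Phi]+D\alpha(\Phi)D^2\Phi,
\]
a linear equation with bounded forcing term $M e^{2MT}$. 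Gr\"onwall gives $C_{\Phi,2}$, and the backward flow again gives $C_{\Psi,2}$; this is (A4). For (A5) I use Liouville's formula
\[
\det D\Phi_{t,s}(x)=\exp\Big(\int_t^s\diver\alpha(\Phi_{t,\tau}(x),\tau)\,d\tau\Big)
\]
together with $|\diver\alpha|\le nM$.

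\textbf{(A3), (A6), (A7).} For (A3) I use the semigroup identity $\Psi_{t,s_1}=\Psi_{t,s_2}\circ\Phi_{s_1,s_2}$ (for $s_1\le s_2$). Then the Lipschitz bound on $\Psi_{t,s_2}$ from (A2) combined with $|\Phi_{s_1,s_2}(x)-x|\le M|s_2-s_1|$ from (A1) gives $Me^{MT}|s_1-s_2|$. For (A6), the formula above gives
\[
J_{t,s}(x)=\exp\Big(-\int_t^s\diver\alpha(\Phi_{t,\tau}(x),\tau)\,d\tau\Big),
\qquad
\nabla J_{t,s}=-J_{t,s}\int_t^s D\Phi_{t,\tau}^{\top}\,\nabla\diver\alpha(\Phi_{t,\tau},\tau)\,d\tau .
\]
Bounding the factors with (A5) and (A2) gives $\tilde C_J\le e^{nMT}\,T\,nM\,e^{MT}$. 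Differentiating once more produces three kinds of terms: products of such integrals, $D^2\diver\alpha$ (this is exactly where the $W^{3,\infty}$ regularity is needed), and $D^2\Phi$. All of these are bounded by (A2) and (A4), which gives (A7).

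\textbf{Main obstacle.} I expect the computations to be routine. The delicate point is the rigorous justification of differentiating the flow three times in $x$ when $\alpha$ is only measurable and square integrable in time. I would handle it by time-mollifying $\alpha$, obtaining the estimates with constants independent of the mollification, and passing to the limit by stability of the flows and lower semicontinuity of the $L^\infty$ norms. Matrix norms are taken as operator norms, with any dimensional constants absorbed into $M$ or $n$.
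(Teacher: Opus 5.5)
Your argument is correct, and it departs from the paper's in three places. First, for $D\Psi_{t,s}$ and $D^2\Psi_{t,s}$ you apply Gr\"onwall to the backward flow. The paper instead stays with the forward flow: it bounds $Z=(D\Phi_{t,s})^{-1}$ via $\partial_s Z=-Z\,D\alpha(\Phi_{t,s},s)$, then differentiates $\Phi_{t,s}\circ\Psi_{t,s}=\mathrm{Id}$ twice to get $\|D^2\Psi_{t,s}\|\le C_n\|D\Psi_{t,s}\|^3\|D^2\Phi_{t,s}\|$. The two routes are equivalent in substance. Second, for (A3) the paper differentiates $\Phi_{t,s}(\Psi_{t,s}(y))=y$ in $s$ to get $\partial_s\Psi_{t,s}(y)=-D\Psi_{t,s}(y)\,\alpha(y,s)$. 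Your identity $\Psi_{t,s_1}=\Psi_{t,s_2}\circ\Phi_{s_1,s_2}$ gives the same constant $Me^{MT}$ without differentiating $\Psi$ in time, which is cleaner when $\alpha$ is only measurable in $s$. Third, and most substantively, (A6)--(A7). The paper applies Jacobi's formula to $\det D\Phi_{t,s}$. Hence $\nabla J_{t,s}$ uses the bound on $D^2\Phi_{t,s}$, and $D^2J_{t,s}$ needs an extra estimate $\|D^3\Phi_{t,s}\|_{L^\infty}\le C_{\Phi,3}$, obtained by differentiating the flow equation a third time. You differentiate the Liouville exponential instead, which moves the extra derivatives onto $\diver\alpha$. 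Then $\nabla J_{t,s}$ needs only $D\Phi$ and $D^2\alpha$, and $D^2J_{t,s}$ needs only $D\Phi$, $D^2\Phi$ and $D^3\alpha$. So you use the $W^{3,\infty}$ hypothesis directly, need no third-order flow estimate, and get an explicit $\tilde C_J$. In exchange, the paper's route reduces everything to pointwise matrix identities for $D\Phi_{t,s}$ once the flow-derivative bounds are available. One small correction: $W^{3,\infty}$ gives $C^{2,1}$, not $C^3$, regularity in space, so you should not invoke third-order classical differentiability of the flow. Your route never uses $D^3\Phi$, though. The a.e.\ chain rule for $D^2(\diver\alpha)\circ\Phi_{t,\tau}$ is legitimate because $\Phi_{t,\tau}$ is bi-Lipschitz; alternatively, mollify in space as well as in time.
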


\begin{proof}
   Fix $x\in\R^n$ and integrate $\partial_s\Phi_{t,s}(x)=\alpha(\Phi_{t,s}(x),s)$ on $[s_1,s_2]$; then
$ \displaystyle|\Phi_{t,s_2}(x)-\Phi_{t,s_1}(x)|\le \int_{s_1}^{s_2}\|\alpha(\cdot,s)\|_{L^\infty}\,ds\le M|s_2-s_1|.$ Taking the supremum over $x$ gives \ref{eq:flowregularity1}.\\ 
Set $Y(s):=D\Phi_{t,s}(x)$ and $Z(s):=Y(s)^{-1}=D\Phi_{t,s}(x)^{-1}$. Differentiating the flow in $x$ gives $\partial_s Y=D\alpha(\Phi_{t,s},s)Y$, $Y(t)=I$, hence $\|D\Phi_{t,s}\|_{L^\infty}\le e^{M(s-t)}$ by Grönwall. For $Z$, differentiate the identity $Y(s)Z(s)=I$ to get
\[
    0=(\partial_s Y)Z+Y(\partial_s Z)\Rightarrow 
    \partial_s Z=-Y^{-1}(\partial_s Y)Z=-Y^{-1}D\alpha(\Phi_{t,s},s)YZ=-Z\,D\alpha(\Phi_{t,s},s).
\]
Grönwall yields $\|Z(s)\|\le e^{M(s-t)}$, and since $D\Psi_{t,s}(y)=D\Phi_{t,s}(x)^{-1}$ with $y=\Phi_{t,s}(x)$, we get $\|D\Psi_{t,s}\|_{L^\infty}\le e^{M(s-t)}$, which concludes \ref{eq:flowregularity2}.\\ 
Fix now $y\in\R^n$ and set $w(s):=\Psi_{t,s}(y)$, so $\Phi_{t,s}(w(s))=y$ for all $s$. Define $F(s,x):=\Phi_{t,s}(x)$; then $F(s,w(s))=y$ and
\[
0=\frac{d}{ds}F(s,w(s))=\partial_s\Phi_{t,s}(w(s))+D\Phi_{t,s}(w(s))\,w'(s).
\]
Since $\partial_s\Phi_{t,s}(x)=\alpha(\Phi_{t,s}(x),s)$ and $\Phi_{t,s}(w(s))=y$, we obtain
\[
D\Phi_{t,s}(w(s))\,w'(s)=-\alpha(y,s).
\]
By the chain rule on $\Phi_{t,s}\circ\Psi_{t,s}=\mathrm{Id}$,
\[
D\Phi_{t,s}(w(s))\,D\Psi_{t,s}(y)=I,
\]
hence $w'(s)=-D\Psi_{t,s}(y)\,\alpha(y,s)$. Therefore, for $s_1<s_2$,
\[
|\Psi_{t,s_2}(y)-\Psi_{t,s_1}(y)|
\le \int_{s_1}^{s_2}\|D\Psi_{t,s}\|_{L^\infty}\,\|\alpha(\cdot,s)\|_{L^\infty}\,ds
\le Me^{MT}|s_2-s_1|,
\]
using \ref{eq:flowregularity2}. Taking $\sup_y$ yields \ref{eq:flowregularity3}.\\ 
For $D^2\Phi_{t,s}$, differentiating once more in $x$ the identity $\partial_s D\Phi_{t,s}=D\alpha(\Phi_{t,s},s)\,D\Phi_{t,s}$ yields a linear equation involving $D^2\alpha$ and $D\Phi_{t,s}$, and Grönwall with \ref{eq:flowregularity2} gives a uniform bound depending only on $M,T,n$. For $D^2\Psi_{t,s}$, differentiate twice the identity $\Phi_{t,s}(\Psi_{t,s}(y))=y$ to obtain a linear system for the second derivatives of $\Psi_{t,s}$ with coefficients depending on $D\Phi_{t,s}(\Psi_{t,s}(y))$. Solving it yields $\|D^2\Psi_{t,s}\|_{L^\infty}\le C_n\|D\Psi_{t,s}\|_{L^\infty}^3\|D^2\Phi_{t,s}\|_{L^\infty}$; using \ref{eq:flowregularity2} and the uniform bound on $D^2\Phi_{t,s}$ just proved gives \ref{eq:flowregularity4}.

\noindent Furthermore, since $\alpha(\cdot,\tau)\in W^{3,\infty}$ we can choose a representative with globally Lipschitz second derivatives, hence differentiating the flow equation in $x$ is legitimate up to order three for a.e. $x$. Iterating the same argument and using the bounds in \ref{eq:flowregularity2} and \ref{eq:flowregularity4}, one can also obtain a uniform bound for the third derivatives of the flow, namely
\begin{equation}
\label{eq:flowregularity3rd}
\|D^3\Phi_{t,s}\|_{L^\infty(\R^n)}\le C_{\Phi,3}.
\end{equation}
As in \cite[Eq.~(2.7) and the subsequent lines]{bagagiolo2024single}, differentiating the determinant along the flow gives
\[
\frac{d}{ds}\det D\Phi_{t,s}(x)=(\mathrm{div}\,\alpha)(\Phi_{t,s}(x),s)\,\det D\Phi_{t,s}(x),\qquad \det D\Phi_{t,t}(x)=1,
\]
hence
\[
\det D\Phi_{t,s}(x)=\exp\!\left(\int_t^s(\mathrm{div}\,\alpha)(\Phi_{t,\tau}(x),\tau)\,d\tau\right).
\]
Using $\|\mathrm{div}\,\alpha(\cdot,\tau)\|_{L^\infty}\le n\|D\alpha(\cdot,\tau)\|_{L^\infty}\le nM$ yields \ref{eq:flowregularity5}; in particular, for all $s\in[t,T]$,
$\|(\det D\Phi_{t,s})^{-1}\|_{L^\infty(\R^n)}\le e^{nM(s-t)}\le e^{nMT}.$\\ 
Let $s\in[t,T]$ and set $A(x):=D\Phi_{t,s}(x)$. By Jacobi’s formula,
\begin{equation}\label{docJacobi}
\partial_{x_i}\det A(x)=\det A(x)\,\mathrm{tr}\!\big(A(x)^{-1}\partial_{x_i}A(x)\big).
\end{equation}
Thus $|\partial_{x_i}\det A|\le n\,\det A\,\|A^{-1}\|\,\|\partial_{x_i}A\|$. Using
\ref{eq:flowregularity2} for $\|A^{-1}\|$, \ref{eq:flowregularity4} for $\|D^2\Phi_{t,s}\|$ (hence $\|\partial_{x_i}A\|$), and \ref{eq:flowregularity5} for $\|\det A\|$, we obtain that there exists $C_{\det,1}>0$ such that 
\[
\|\nabla(\det D\Phi_{t,s})\|_{L^\infty(\R^n)}\le C_{\det,1}.
\]
Moreover,  $\displaystyle \nabla J_{t,s}=-\frac{\nabla(\det A)}{(\det A)^2}$, so by \ref{eq:flowregularity5} and the previous bound, we obtain \ref{eq:flowregularity7}, i.e., 
\[
    \left\|\nabla J_{t,s}\right\|_{L^\infty(\R^n)} \leq C_{\det,1} e^{2nMT}=: \tilde{C}_J.
\] 
Set $\tilde{A}(x):=\det A(x)=\det D\Phi_{t,s}(x)$. Differentiating \eqref{docJacobi} with respect to $x_j$ gives
\begin{align*}
    \partial_{x_jx_i}^2 \tilde{A} &=\partial_{x_j}\tilde{A}\,\mathrm{tr}\big(A^{-1}\partial_{x_i}A\big)+\tilde{A}\,\partial_{x_j}\Big(\mathrm{tr}\big(A^{-1}\partial_{x_i}A\big)\Big)\\
    & = \tilde{A}\,\mathrm{tr}\big(A^{-1}\partial_{x_j}A\big)\,\mathrm{tr}\big(A^{-1}\partial_{x_i}A\big)-\tilde{A}\,\mathrm{tr}\big(A^{-1}(\partial_{x_j}A)A^{-1}(\partial_{x_i}A)\big)+\tilde{A}\,\mathrm{tr}\big(A^{-1}\partial_{x_jx_i}^2A\big)\\ 
    & \leq e^{nMT} n^2 e^{2MT} C_{\Phi,2}^2 + e^{nMT} n e^{2MT}C_{\Phi,2}^2 + e^{nMT}n e^{MT}C_{\Phi,3}=: C_{\det,2},
\end{align*}
where we used \ref{eq:flowregularity2}, \ref{eq:flowregularity4}, \eqref{eq:flowregularity3rd}, and \ref{eq:flowregularity5}.

\noindent Moreover, for every $i,j$,
\[
\partial_{x_ix_j}^2(J_{t,s})
=2\tilde{A}^{-3}\,(\partial_{x_i}\tilde{A})(\partial_{x_j}\tilde{A})-J_{t,s}^2\,\partial_{x_ix_j}^2\tilde{A}.
\]
Using the bounds on $\tilde{A}$ (from \ref{eq:flowregularity5}), on $\partial_{x_i}\tilde{A}$ (from \ref{eq:flowregularity7}), and on $\partial_{x_ix_j}^2\tilde{A}$ (from the estimate just obtained), we get a uniform constant $C_J$, i.e., \ref{eq:flowregularity8}.
\end{proof}

\begin{proposition}\label{prop:app-new}
Let $(X,\|\cdot\|)$ be a normed space, $T>0$, and $Y\subseteq X\times [0,T]$. Assume that $f:Y\to \mathbb{R}$ is $L$-Lipschitz, that is,
\[
|f(x,t)-f(y,s)|\leq L\bigl(\|x-y\|+|t-s|\bigr)
\qquad \forall (x,t),(y,s)\in Y.
\]
Then there exists an extension $\tilde f:X\times [0,T]\to \mathbb{R}$ such that $\tilde f=f$ on $Y$ and
\[
|\tilde f(x,t)-\tilde f(y,s)|
\leq L\bigl(\|x-y\|+|t-s|\bigr)
\qquad \forall (x,t),(y,s)\in X\times[0,T].
\]
Furthermore if $X$ is compact then the map $\displaystyle g:[0,T] \to \R,\; t\mapsto \max_{x\in X} \tilde{f}(x,t)$ is continuous.
\end{proposition}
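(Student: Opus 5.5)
The extension comes from the McShane formula, applied on the product space with the metric $d((x,t),(y,s)):=\|x-y\|+|t-s|$. If $Y=\emptyset$, take $\tilde f\equiv 0$. Otherwise set
\[
\tilde f(x,t):=\inf_{(y,s)\in Y}\bigl[f(y,s)+L(\|x-y\|+|t-s|)\bigr],\qquad (x,t)\in X\times[0,T].
\]
The argument then runs in four steps.

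First, $\tilde f$ is finite. Fix some $(y_0,s_0)\in Y$. For every $(y,s)\in Y$, the Lipschitz bound gives
\[
f(y,s)+L\,d((x,t),(y,s))\ge f(y_0,s_0)-L\,d((x,t),(y_0,s_0)),
\]
so the infimum is bounded below.

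Second, $\tilde f=f$ on $Y$. If $(x,t)\in Y$, choosing $(y,s)=(x,t)$ shows $\tilde f(x,t)\le f(x,t)$. The reverse inequality holds because each term in the infimum is at least $f(x,t)$, again by the Lipschitz property of $f$.

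Third, $\tilde f$ is $L$-Lipschitz. For any two points $(x,t),(x',t')$ and any fixed $(y,s)\in Y$, the triangle inequality for $d$ gives
\[
f(y,s)+L\,d((x,t),(y,s))\le f(y,s)+L\,d((x',t'),(y,s))+L\,d((x,t),(x',t')).
\]
Taking the infimum over $(y,s)$ yields $\tilde f(x,t)\le\tilde f(x',t')+L\,d((x,t),(x',t'))$. Exchanging the two points gives the claimed estimate.

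Finally, continuity of $g$. When $X$ is compact, $\tilde f(\cdot,t)$ is continuous, so the maximum over $X$ is attained and $g$ is well defined. For $t,s\in[0,T]$, pick $x_t$ with $g(t)=\tilde f(x_t,t)$. Then
\[
g(t)=\tilde f(x_t,t)\le\tilde f(x_t,s)+L|t-s|\le g(s)+L|t-s|.
\]
By symmetry, $|g(t)-g(s)|\le L|t-s|$, so $g$ is in fact Lipschitz.

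Nothing here is a genuine obstacle. The only points that need care are the finiteness of the infimum and the attainment of the maximum, and both are handled above.
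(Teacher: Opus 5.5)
Your proof is correct and follows the paper's argument: the McShane inf-convolution extension with respect to $\|x-y\|+|t-s|$, the triangle inequality for the Lipschitz bound, and the estimate $\tilde f(x,t)\le \tilde f(x,s)+L|t-s|$, maximized over $x$, giving $|g(t)-g(s)|\le L|t-s|$. You also make explicit what the paper leaves implicit (the case $Y=\emptyset$, finiteness of the infimum, and attainment of the maximum), and your argument for $g$ works verbatim with the maximum taken over any compact $K\subseteq X$, which is how the result is actually used (with $K=\bar{X}_c(T)$), since a normed space other than $\{0\}$ is never compact.
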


\begin{proof}
Define $\displaystyle \tilde f(x,t):=\inf_{(z,\tau)\in Y}\Bigl(f(z,\tau)+L\bigl(\|x-z\|+|t-\tau|\bigr)\Bigr)$. Clearly $\tilde{f}|_{Y}\equiv f$. Furthermore, let $(x,t),(y,s)\in X\times[0,T]$; clearly 
\[
    f(z,\tau)+L(\|x-z\|+|t-\tau|)\leq f(z,\tau)+L(\|x-y\|+\|y-z\|+|t-s|+|s-\tau|).
\]
Taking the infimum over $(z,\tau)\in Y$, we get $\tilde f(x,t)\leq \tilde f(y,s)+L\bigl(\|x-y\|+|t-s|\bigr),$ and exchanging the roles of $(x,t),$ $(y,s)$ we also obtain $
\tilde f(y,s)\leq \tilde f(x,t)+L\bigl(\|x-y\|+|t-s|\bigr)$. Hence
\[
|\tilde f(x,t)-\tilde f(y,s)|
\leq L\bigl(\|x-y\|+|t-s|\bigr).
\]
Let now $t,s\in[0,T]$. For every $x\in X$ we have that $\tilde f(x,t)\le \tilde f(x,s)+L|t-s|,$ and taking the supremum over $x\in X$, we obtain $g(t)\le g(s)+L|t-s|$.
Exchanging the roles of $t$ and $s$, we also get $g(s)\le g(t)+L|t-s|$, and therefore $|g(t)-g(s)|\le L|t-s|$.
\end{proof}

\bibliographystyle{abbrvurl} 

\bibliography{Bibliography}

@article{Ambrosio_Crippa_2014, 
title={Continuity equations and ODE flows with non-smooth velocity}, 
volume={144}, 
DOI={10.1017/S0308210513000085}, 
number={6}, 
journal={Proceedings of the Royal Society of Edinburgh: Section A Mathematics}, 
author={Ambrosio, Luigi and Crippa, Gianluca}, 
year={2014}, 
pages={1191–1244}
}

@book{BardiCapuzzoDolcetta1997,
  author    = {Bardi, Martino and Capuzzo-Dolcetta, Italo},
  title     = {Optimal Control and Viscosity Solutions of Hamilton--Jacobi--Bellman Equations},
  series    = {Systems \& Control: Foundations \& Applications},
  publisher = {Birkh{\"a}user},
  address   = {Boston},
  year      = {1997},
  isbn      = {978-0-8176-4755-1},
  doi       = {10.1007/978-0-8176-4755-1}
}

@incollection{ambrosio2008flow,
  author       = {Luigi Ambrosio and Gianluca Crippa},
  title        = {Existence, uniqueness, stability and differentiability properties of the flow associated to weakly differentiable vector fields},
  booktitle    = {Transport Equations and Multi-D Hyperbolic Conservation Laws},
  series       = {Lecture Notes of the Unione Matematica Italiana},
  volume       = {5},
  publisher    = {Springer},
  year         = {2008},
  address      = {Berlin, Heidelberg},
  pages        = {3--57}
}

@book{Santambrogio2015OptimalTransport,
  author    = {Santambrogio, Filippo},
  title     = {Optimal Transport for Applied Mathematicians},
  publisher = {Birkh{\"a}user},
  year      = {2015},
  doi       = {10.1007/978-3-319-20828-2}
}

@article{LasryLions2007MFGI,
  author  = {Lasry, Jean-Michel and Lions, Pierre-Louis},
  title   = {Mean field games},
  journal = {Japanese Journal of Mathematics},
  volume  = {2},
  pages   = {229--260},
  year    = {2007},
  doi     = {10.1007/s11537-007-0657-8}
}

@misc{Cardaliaguet2013NotesMFG,
  author       = {Cardaliaguet, Pierre},
  title        = {Notes on Mean Field Games},
  howpublished = {Lecture notes},
  year         = {2013},
  note         = {From P.-L. Lions' lectures at Coll\`ege de France; version dated January 15, 2012},
  url          = {https://www.ceremade.dauphine.fr/~cardaliaguet/MFG20130420.pdf}
}

@article{bagagiolo2024single,
  author    = {Fabio Bagagiolo and Rossana Capuani and Luciano Marzufero},
  title     = {A single player and a mass of agents: A pursuit evasion-like game},
  journal   = {ESAIM: Control, Optimisation and Calculus of Variations},
  volume    = {30},
  pages     = {17},
  year      = {2024},
  doi       = {10.1051/cocv/2024009},
  url       = {https://www.esaim-cocv.org/articles/cocv/pdf/2024/01/cocv2024009.pdf},
  publisher = {EDP Sciences, SMAI}
}

@article{bagagioloCapuaniMarzufero2025zerosum,
  author  = {Bagagiolo, Fabio and Capuani, Rossana and Marzufero, Luciano},
  title   = {A zero-sum differential game for two opponent masses},
  journal = {Partial Differential Equations and Applications},
  volume  = {6},
  pages   = {19},
  year    = {2025},
  doi     = {10.1007/s42985-025-00322-5}
}

@book{Grafakos2014ModernFourierAnalysis,
  author    = {Grafakos, Loukas},
  title     = {Modern Fourier Analysis},
  edition   = {3},
  series    = {Graduate Texts in Mathematics},
  volume    = {250},
  publisher = {Springer},
  address   = {New York},
  year      = {2014},
  isbn      = {978-1-4939-1229-2},
  doi       = {10.1007/978-1-4939-1230-8}
}

@article{CannarsaGozziSoner1991HilbertHJB,
  author  = {Cannarsa, Piermarco and Gozzi, Fausto and Soner, H. Mete},
  title   = {A boundary value problem for Hamilton--Jacobi equations in {H}ilbert spaces},
  journal = {Applied Mathematics \& Optimization},
  volume  = {24},
  number  = {1},
  pages   = {197--220},
  year    = {1991},
  doi     = {10.1007/BF01447742}
}

@article{Soner1986StateConstraintI,
  author  = {Soner, H. Mete},
  title   = {Optimal Control with State-Space Constraint {I}},
  journal = {SIAM Journal on Control and Optimization},
  volume  = {24},
  number  = {3},
  pages   = {552--561},
  year    = {1986},
  doi     = {10.1137/0324032}
}

@article{CrandallLions1985HJInfiniteDimI,
  author  = {Crandall, Michael G. and Lions, Pierre-Louis},
  title   = {Hamilton--Jacobi equations in infinite dimensions. {I}. Uniqueness of viscosity solutions},
  journal = {Journal of Functional Analysis},
  volume  = {62},
  number  = {3},
  pages   = {379--396},
  year    = {1985},
  doi     = {10.1016/0022-1236(85)90011-4}
}

@article{CrandallLions1986HJInfiniteDimII,
  author  = {Crandall, Michael G. and Lions, Pierre-Louis},
  title   = {Hamilton--Jacobi equations in infinite dimensions. {II}. Existence of viscosity solutions},
  journal = {Journal of Functional Analysis},
  volume  = {65},
  number  = {3},
  pages   = {368--405},
  year    = {1986},
  doi     = {10.1016/0022-1236(86)90001-5}
}

@article{JimenezMarigondaQuincampoix2020MultiagentWasserstein,
  author  = {Jimenez, Chlo{\'e} and Marigonda, Antonio and Quincampoix, Marc},
  title   = {Optimal control of multiagent systems in the {W}asserstein space},
  journal = {Calculus of Variations and Partial Differential Equations},
  volume  = {59},
  number  = {2},
  pages   = {58},
  year    = {2020},
  doi     = {10.1007/s00526-020-1718-6}
}

@article{Pogodaev2016OptimalControlContinuityEq,
  author  = {Pogodaev, Nikolay},
  title   = {Optimal control of continuity equations},
  journal = {Nonlinear Differential Equations and Applications NoDEA},
  volume  = {23},
  pages   = {21},
  year    = {2016},
  doi     = {10.1007/s00030-016-0357-2}
}

@article{CavagnariMarigondaQuincampoix2021CompatibilityConstraints,
  author  = {Cavagnari, Giulia and Marigonda, Antonio and Quincampoix, Marc},
  title   = {Compatibility of state constraints and dynamics for multiagent control systems},
  journal = {Journal of Evolution Equations},
  volume  = {21},
  pages   = {4491--4537},
  year    = {2021},
  doi     = {10.1007/s00028-021-00724-z}
}

@article{Daudin2023FokkerPlanckStateConstraints,
  author  = {Daudin, S{\'e}bastien},
  title   = {Optimal control of the {F}okker--{P}lanck equation under state constraints in the {W}asserstein space},
  journal = {Journal de Math{\'e}matiques Pures et Appliqu{\'e}es},
  volume  = {175},
  pages   = {37--75},
  year    = {2023},
  doi     = {10.1016/j.matpur.2023.05.002}
}

@inproceedings{brupasaga,
  author  = {Bruno, Giuseppe and Pasqualotto, Federico and Agazzi, Andrea},
  title   = {Emergence of meta-stable clustering in mean-field transformer models},
  booktitle = {International Conference on Learning Representations (ICLR)},
  year    = {2025},
  url     = {https://openreview.net/forum?id=f8XBUBNdrh},
  note    = {Published as a conference paper at ICLR 2025}
}

@article{gesletpolrig,
  author    = {Borjan Geshkovski and Cyril Letrouit and Yury Polyanskiy and Philippe Rigollet},
  title     = {A Mathematical Perspective on Transformers},
  journal   = {Bulletin of the American Mathematical Society},
  volume    = {62},
  number    = {3},
  year      = {2025},
  doi       = {10.1090/bull/1863}
}

@article{AussedatJerhaouiZidani2024ViscosityCentralizedMeasure,
  author  = {Aussedat, Averil and Jerhaoui, Othmane and Zidani, Hasnaa},
  title   = {Viscosity solutions of centralized control problems in measure spaces},
  journal = {ESAIM: Control, Optimisation and Calculus of Variations},
  volume  = {30},
  pages   = {91},
  year    = {2024},
  doi     = {10.1051/cocv/2024081},
  note    = {HAL: hal-04335852}
}

@article{Bonnet2019PMPConstrainedWasserstein,
  author  = {Bonnet, Beno{\^i}t},
  title   = {A {Pontryagin} Maximum Principle in {W}asserstein spaces for constrained optimal control problems},
  journal = {ESAIM: Control, Optimisation and Calculus of Variations},
  volume  = {25},
  pages   = {52},
  year    = {2019},
  doi     = {10.1051/cocv/2019044}
}

@article{BonnetRossi2019PMPwasserstein,
  author  = {Bonnet, Beno{\^i}t and Rossi, Francesco},
  title   = {The {Pontryagin} Maximum Principle in the {W}asserstein Space},
  journal = {Calculus of Variations and Partial Differential Equations},
  volume  = {58},
  number  = {1},
  pages   = {7},
  year    = {2019},
  doi     = {10.1007/s00526-018-1447-2}
}

@article{mehlactil,
  author    = {Mohamed Mehdaoui and Deborah Lacitignola and Mouhcine Tilioua},
  title     = {State-Constrained Optimal Control of a Coupled Quasilinear Parabolic System Modeling Economic Growth in the Presence of Technological Progress},
  journal   = {Applied Mathematics \& Optimization},
  year      = {2025},
  volume    = {91},
  pages     = {14},
  doi       = {10.1007/s00245-024-10214-6}
}

\section*{Statements and Declarations}

\subsection*{Funding}
Fabio Bagagiolo was partially supported by the GNAMPA-INdAM 2026 project \emph{Analisi e controllo per alcuni problemi di evoluzione} (CUP E53C25002010001). Ivan Roman\`o was partially supported by the GNAMPA-INdAM 2026 project \emph{Problemi di controllo e mean-field con vincoli di stato e dimensione infinita: teoria, applicazioni} (CUP E53C25002010001).

\subsection*{Competing Interests}
The authors have no relevant financial or non-financial interests to disclose.

\subsection*{Data Availability}
Not applicable.

\subsection*{Code Availability}
Not applicable.

\subsection*{Ethics Approval}
Not applicable.

\subsection*{Author Contributions}
All authors contributed to the study conception and design.

\end{document}